\documentclass[review,hidelinks,onefignum,onetabnum]{siamart220329}

\input{abbrev}

\usepackage{lipsum}
\usepackage{amsfonts}
\usepackage{graphicx}
\usepackage{epstopdf}
\usepackage{algorithm,algpseudocode,amsmath,amssymb} 
\ifpdf
  \DeclareGraphicsExtensions{.eps,.pdf,.png,.jpg}
\else
  \DeclareGraphicsExtensions{.eps}
\fi

\newsiamremark{remark}{Remark}
\newsiamremark{hypothesis}{Hypothesis}
\crefname{hypothesis}{Hypothesis}{Hypotheses}
\newsiamthm{claim}{Claim}
\newsiamremark{assumption}{Assumption}
\crefname{assumption}{Assumption}{Assumptions}

\usepackage{siunitx}

\headers{M$^3$C\xspace for Hyperparameter Estimation}{}

\title{
A Majorization-Minimization with Monte Carlo Approach for Hyperparameter Estimation
\thanks{Submitted to the editors \today.
\funding{This work was partially supported by the National Science Foundation under grant DMS-2038118 (E. Buser), DMS-2411197 (J. Chung), and DMS-2411198 (A.K.\ Saibaba), and Department of Energy through the award DE-SC002318 (H. Diaz and A.K. Saibaba). Any opinions, findings, conclusions or recommendations expressed in this material are those of the author(s) and do not necessarily reflect the views of the National Science Foundation.}
}}

\author{Elle Buser,  Julianne Chung, Hugo D\'iaz, Arvind K.\ Saibaba}
\usepackage{amsopn}

\makeatletter
\newcommand*{\addFileDependency}[1]{
  \typeout{(#1)}
  \@addtofilelist{#1}
  \IfFileExists{#1}{}{\typeout{No file #1.}}
}
\makeatother

\usepackage{color}
\usepackage{xspace}

\newcommand{\mmmc}{M$^3$C\xspace}

\usepackage{mathtools}
\usepackage{mathrsfs}  
\ifpdf
\hypersetup{
  pdftitle={\texorpdfstring{\mmmc}{M3C} for Hyperparameter Estimation},
  pdfauthor={}
}
\fi

\usepackage{xr}
\begin{document}
\nolinenumbers
\maketitle

\begin{abstract}
We consider inverse problems with linear forward models and Gaussian priors, but with unknown hyperparameters that may arise from the model, the noise, or the specification of the prior. We model this using a hierarchical Bayes framework resulting in a posterior distribution that is non-Gaussian, in general, and challenging to sample from. Consequently, we use an empirical Bayes framework for estimating the maximum a posteriori estimate of the hyperpameters by considering the marginalized posterior distribution. However, the optimization problem is also computationally challenging due to the need for repeated evaluation of log determinants. To address this issue, we propose a Majorization-Minimization with Monte Carlo approach, which we call \mmmc, for hyperparameter estimation.  Specifically, we replace the challenging optimization problem with a sequence of simpler ones by utilizing a majorization function (or majorant) for the log-determinant term, combined with a Monte Carlo estimator to approximate the majorant. We provide theoretical results, showing that under certain assumptions, the \mmmc iterates converge with high probability to a critical point of the original cost function. A variety of numerical examples are provided from seismic tomography, super-resolution imaging, and contaminant source identification.

\end{abstract}

\begin{keywords}
hierarchical Bayesian inverse problems, hyperparameter estimation, majorization-minimization methods, Monte Carlo, stochastic average approximation
\end{keywords}

\begin{MSCcodes}
65F22, 65M32, 62C10
\end{MSCcodes}

\section{Introduction} \label{sec:into}
Large-scale inverse problems arise in many applications, in which a critical task is to estimate or reconstruct a high-dimensional inverse parameter $\bfx \in \bbR^n$, that represents the discretization of a detailed spatial or spatio-temporal function, from a few noisy and indirect measurements $\bfb \in \bbR^m$.  Obtaining a suitable reconstruction requires an appropriate choice of the prior distribution for $\bfx$, as well as a likelihood function that incorporates knowledge of the forward model and assumptions about the measurement noise. The Bayesian approach provides a systematic approach for estimating the parameters $\bfx$ from the data $\bfb$. However, one confounding challenge in the Bayesian approach is that the forward model, the noise model, and the prior distribution have unknown/uncertain parameters that must simultaneously be estimated from the data along with the inversion parameters $\bfx$. We lump these additional parameters into so-called hyperparameters $\bftheta \in \bbR^p$, and note that this is slightly non-standard usage of this term.

The hierarchical Bayesian approach provides a general framework for modeling any \textit{additional} unknown hyperparameters 
by modeling them as random variables and estimating them simultaneously with the unknown inversion parameters.  By conditioning on the observed data, the Bayesian solution to the inverse problem is given by the joint posterior distribution $\pi(\bfx,\bftheta \mid \bfb)$ (see~\eqref{eqn:posterior}).  The joint posterior distribution is non-Gaussian in general, and a full exploration of this distribution, e.g., via Markov Chain Monte Carlo methods, can be computationally infeasible for several reasons. Namely, the number of unknown parameters is very large, and evaluating the likelihood can be very expensive in practice. 

Although our approach is general, to position our work with existing work, we consider two special but important cases. 
In what follows, let $\bfe \in \bbR^m$ represent noise or error in the data.  
\paragraph{\textbf{Case I}: Distribution uncertainty} We assume that the forward model is linear and the hyperparameters are taken to be the parameters that define the prior and the noise distributions. Specifically,  consider
\begin{align}
\label{eq:case1}
\begin{split}
 \bfb = \bfA \bfx + \bfe, & \qquad  \bfe \mid \bfpsi \sim \calN(\bfzero, \bfR(\bfpsi)), \\
 \bfx \mid \bfpsi \sim \calN(\bfmu_\bfx, \bfQ_\bfx(\bfpsi)), &  \qquad \bfpsi \sim \pi_\bfpsi, 
\end{split}
\end{align}
where $\bfA \in \bbR^{m \times n}$ and $\bfmu_\bfx\in\bbR^{n}$ are fixed, and $\bfR(\bfpsi)$ and $\bfQ_\bfx(\bfpsi)\in\bbR^{n\times n}$ are symmetric and positive definite (SPD) matrices that depend on hyperparameters $\bfpsi \in \bbR^q$ with hyperprior $\pi_\bfpsi$. 
\paragraph{\textbf{Case II}: Model uncertainty} We assume that the forward model is linear but with uncertain (hyper)parameters $\bfy$, which enter the forward model nonlinearly. This is known in the literature as a separable, nonlinear inverse problem. Specifically,
\begin{align}
\label{eq:case2}
\begin{split}
\bfb = \bfA(\bfy) \bfx + \bfe, & \qquad \bfe \sim \calN(\bfzero, \bfR), \\ \bfx \sim \calN(\bfmu_\bfx, \bfQ_\bfx), & \qquad \bfy \sim \pi_\bfy,
\end{split}
\end{align}
where $\bfA(\cdot): \bbR^\ell \to \bbR^{m \times n}$ maps unknown model parameters $\bfy \in \bbR^\ell$ to a large, forward operator and $\pi_\bfy$ represents the model uncertainty hyperparameters.  We assume that $\bfmu_\bfx\in\bbR^{n}$, $\bfR \in \bbR^{m \times m}$ and $\bfQ_\bfx\in\bbR^{n\times n}$ are fixed.

In both cases, we are interested in estimating hyperparameters: $\bfpsi$ (for Case I) and $\bfy$ (for Case II).  Thus, we consider a general framework where $\bftheta = [\bfpsi\t, \bfy\t]\t$ and refer back to \cref{eq:case1} and \cref{eq:case2} for specific derivations.

In this paper, we develop computationally efficient methods for hyperparameter estimation.  Following previous works, e.g., \cite{Hall-Hooper_Saibaba_Chung_Miller_2024,chung2024efficient}, we obtain the marginal posterior distribution by marginalizing the parameters $\bfx$ from the joint posterior distribution, and we compute the marginalized maximum a posteriori (MMAP) estimate of the hyperparameters. For large-scale problems, this presents a computationally expensive optimization problem due to the need to evaluate multiple log determinants of very large matrices.  In \cite{chung2024efficient}, stochastic average approximation (SAA) for MMAP estimation was done by using a stochastic Lanczos method. Although this approach has been shown to be successful in many cases, it can be expensive in practice and has only been applied to Case I.  

\paragraph{Our Approach and Contributions} In this paper, we develop an optimization approach for solving the MMAP based on the majorization-minimization (MM) principle~\cite{lange2016mm}. This approach uses a majorization function for the log-determinant term, to obtain a majorant for the objective function involving a trace, which leads to an inner-outer optimization scheme defined by a sequence of simpler optimization problems. The specific contributions of this paper are:
\begin{enumerate}
    \item We solve the sequence of MM majorants using a Monte Carlo estimator for the trace term in the majorant. We call this approach \textit{Majorization-Minimization with Monte-Carlo}, dubbed \mmmc (Section~\ref{sub:m3c}). 
  
    \item We conduct a probabilistic analysis of MM approaches with inexactness in the optimal solution of each majorant (Section~\ref{sec:convergenceMMMC}). As the number of iterations grows, the sequence of minimizers generated by the \mmmc iterations converges with high probability to a critical point of the original cost functional. This analysis may be of independent interest beyond this paper. 
    
    \item We derive bounds on the minimal number of samples for the trace estimator at each outer iteration to ensure that the \mmmc iterates converge with high probability (Section~\ref{sec:convergenceMMMC}).
      \item We show that the SAA approach, previously applied to Case I~\cite{chung2024efficient}, also applies to Case II, with minor modifications. We   provide a computational cost analysis of the \mmmc approach and compare it with the SAA approach (Section~\ref{ssec:compconsider}).
    \item We derive bounds on the number of Monte Carlo samples and the degree of the Lanczos polynomial to guarantee a small backward error on the approximate minimizer of the SAA approach (Section~\ref{sub:SAAconvergence}).    
\end{enumerate}
We demonstrate the performance of our approach on a variety of applications from seismic tomography, super-resolution imaging, and contaminant source identification (Section~\ref{sec:NumEx}). In particular, we will show that the proposed \mmmc method can be used for hyperparameter estimation in general hierarchical inverse problems. A background section (Section~\ref{sec:background}) and conclusions (Section~\ref{sec:conclusions}) complete the paper. 
MATLAB code implementing the proposed methods will be available at \url{https://github.com/ellebuser/hyperparam_M3C}. Additional details are available in Supplementary Materials.



\paragraph{Related work}
Hierarchical Bayesian models for inverse problems represent an active and growing area of research, e.g., see \cite{calvetti2023bayesianbook,calvetti2018inverse,sanz2025hierarchical}, and references therein. Many of these works have been investigated for linear forward models (e.g., Case I), where it is assumed that $\bfx$ is sparse or has a sparse representation.  In the context of sparsity-promoting priors, there are various classes of priors, many of which rely on a conditionally Gaussian prior with variances that are distributed according to a generalized gamma hyperprior. Notably, the Iterative Alternating Sequential algorithm described in \cite{calvetti2018inverse,calvetti2019hierachical,calvetti2020sparse} can be used to efficiently compute the MAP estimator of the joint posterior distribution. Since this is essentially an alternating optimization approach, it is easy to implement and globally convergent.  
We focus on optimization here, but sampling for hierarchical models has been considered for linear Gaussian problems in \cite{fox2016fast,saibaba2019efficient} and for sparsity-promoting priors (e.g., via prior normalization)  in \cite{glaubitz2025efficient,calvetti2025subspace}.  Since our approach can be considered as an Empirical Bayes (EB) approach, we note that an EB analysis can lead to misleading results, with smaller posterior variances and narrower credible intervals than a fully Bayesian analysis \cite{reich2026bayesian}.  Nonetheless, EB can be a useful tool especially in high-dimensional settings and when uncertainty in the hyperparameters is negligible.

Separable, nonlinear inverse problems (e.g., Case II) arise in various applications, including super-resolution, motion correction, and semiblind deconvolution \cite{chung2010efficient,espanol2023variable,cornelio2014constrained}.  Most works have focused on exploiting the separable structure to solve separable nonlinear least-squares problems efficiently, e.g., using variable projection approaches \cite{golub2003separable,ruhe1980algorithms,o2013variable,chung2010efficient,espanol2024convergence,espanol2023variable}. Since the unknown model parameters $\bfy$ can be interpreted as model error or model uncertainty \cite{kennedy2001bayesian}, there are some works that consider model error in a Bayesian framework \cite{calvetti2018inverse,arridge2006approximation,calvetti2018iterative,calvetti2024computationally,Kaipio:1338003}. Many of these works consider model error uncertainty in the context of computer model calibration or other reduced or approximate models, rather than the separable, nonlinear models of interest.  For edge-preserving tomography with uncertain view angles, which can be represented as a separable, nonlinear inverse problem, new optimization strategies are considered in \cite{riis2021computed,riis2021computed2}, and a hybrid Gibbs sampler is considered \cite{uribe2022hybrid}.

\section{Background}
\label{sec:background}
In this section, we provide the mathematical foundation for hyperparameter estimation in hierarchical Bayesian inverse problems. We first formalize the marginal posterior distribution and identify the primary computational bottlenecks. We then introduce stochastic trace estimators to approximate the marginal posterior and define the resulting empirical objective function that was optimized using an SAA method in \cite{chung2024efficient}.

\subsection{Hierarchical Bayesian Inverse Problems}
\label{sec:hierarchicalBayesian}

In this section, we describe the general marginalization approach for hyperparameter estimation in hierarchical inverse problems, and we describe specific challenges that arise in the two special cases identified in \Cref{sec:into}. For a unified presentation, let $\bftheta = [\bfpsi\t,\bfy\t]\t\in\bbR^{p}$ be the vector of hyperparameters that we wish to estimate. 
Note that in Case II, $\bfy$ is often referred to as model parameters rather than hyperparameters, but for simplicity we denote all unknown parameters (except for the inverse parameters $\bfx$) as hyperparameters. 

We assume that the parameter sets $\bfx$ and $\bftheta$ are independent random variables, and furthermore, assume that $\bfpsi$ and $\bfy$ are also independent. 
Then using Bayes' theorem and combining \eqref{eq:case1} and \eqref{eq:case2}, the posterior density is given by
\begin{align}\label{eqn:posterior}
\pi(\bfx,\bftheta \mid \bfb)& = \frac{\pi (\bfb \mid \bfx, \bftheta)  \pi(\bfx\mid \bftheta) \pi_{\bftheta}(\bftheta)}{\pi(\bfb)} \notag\\
& \propto \frac{\pi_{\bfy} (\bfy) \pi_{\bfpsi}(\bfpsi) \exp \left( - \frac{1}{2} \| \bfb - \bfA(\bfy) \bfx \|^{2}_{\bfR(\bfpsi)^{-1}} - \frac{1}{2} \| \bfx - \bfmu_\bfx\|^{2}_{\bfQ_\bfx(\bfpsi)^{-1}} \right)}{\det(\bfR(\bfpsi))^{1/2} \det(\bfQ_\bfx(\bfpsi))^{1/2}},
\end{align}
where $\| \bfx \|_{\bfK} = \sqrt{\bfx^{\top} \bfK \bfx}$ defines a norm for any SPD matrix $\bfK.$
The marginal posterior density is obtained by integrating out $\bfx$, 
\begin{align}
\pi(\bftheta\mid\bfb) & = \int_{\bbR^n} \pi(\bfx,\bftheta \mid \bfb)d\bfx \notag \\
& \propto \pi_{\bfy}(\bfy) \pi_{\bfpsi}(\bfpsi)\det({\bfPsi}(\bftheta))^{-1/2} \exp \left( - \frac{1}{2} \| \bfA(\bfy) \bfmu_\bfx - \bfb \|^{2}_{{\bfPsi}(\bftheta)^{-1}} \right), 
\end{align}
where the hyperprior factorizes as $\pi_{\bftheta}(\bftheta)= \pi_{\bfy}(\bfy) \pi_{\bfpsi}(\bfpsi)$ and ${\bfPsi} : \bbR^{p} \rightarrow \bbR^{m \times m}$ is defined as
\begin{equation}
\label{eq:Zmatrixh}
{\bfPsi}(\bftheta) = \bfA(\bfy)\bfQ_\bfx(\bfpsi) \bfA(\bfy)\t + \bfR(\bfpsi).
\end{equation}

The problem of hyperparameter estimation seeks the MMAP estimate, and is obtained by minimizing the negative log of the marginal posterior, i.e.,
\begin{equation}
\label{eq:full_optimization}
\bftheta^{*} \in \arg\min_{\bftheta \in \Theta} \calF(\bftheta) \coloneqq -\ln{\pi(\bftheta\mid\bfb)},
\end{equation}
where $\Theta \subset \bbR^p$. 
{Expanding this objective function and dropping additive constants independent of $\bftheta$ yields:}
\begin{equation}
\label{eq:F}
    \mc{F}(\bftheta) = -\log\pi_{\bfy}(\bfy)-\log\pi_{\bfpsi}(\bfpsi) + \frac12 \logdet({\bfPsi}(\bftheta)) + \frac12 \|\bfA(\bfy) \bfmu_\bfx - \bfb\|_{{\bfPsi}(\bftheta)^{-1}}^2.
\end{equation}



For a fixed $\bfy$, the partial derivatives of $\mc{F}$ with respect to $\bfpsi$ ($1 \le j \le q$) are:
\begin{align}
\frac{\partial{\mc{F}}}{\partial\psi_j} = -&\frac{1}{\pi_{\bfpsi}(\bfpsi)}\frac{\partial\pi_{\bfpsi}(\bfpsi)}{\partial \psi_j} + \frac{1}{2}\trace\left({\bfPsi}(\bftheta)^{-1}\frac{\partial {\bfPsi}(\bftheta)}{\partial \psi_j}\right) \label{eq:fullGradpsi} \\
        &- \frac{1}{2}\Big[ {\bfPsi}(\bftheta)^{-1}\left(\bfA(\bfy)\bfmu_\bfx-\bfb\right)\Big]\t\left[\frac{\partial {\bfPsi}(\bftheta)}{\partial \psi_j}{\bfPsi}(\bftheta)^{-1}(\bfA(\bfy)\bfmu_\bfx-\bfb)  \right] \notag
\end{align}
where, for $1 \le j \le q$,
$$
\frac{\partial {\bfPsi}(\bftheta)}{\partial \psi_j} = \bfA(\bfy)\frac{\partial\bfQ_\bfx(\bfpsi)}{\partial\psi_j} \bfA(\bfy)\t + \frac{\partial\bfR(\bfpsi)}{\partial\psi_j}.
$$
Similarly, for a fixed ${\bfpsi}$, the partial derivatives with respect to $\bfy$ ($1 \le j \le \ell$) are:
\begin{align}
        & \frac{\partial{\mc{F}}}{\partial y_j} =  -\frac{1}{\pi_{\bfy}(\bfy)} \frac{\partial\pi_{\bfy}(\bfy)}{\partial y_j} + \frac{1}{2}\trace\left(\bfPsi(\bftheta)^{-1}\frac{\partial \bfPsi(\bftheta)}{\partial y_j}\right) \label{eq:fullGrady}   \\
         &- \frac{1}{2}\Big[ \bfPsi(\bftheta)^{-1}\left(\bfA(\bfy)\bfmu_\bfx-\bfb\right)\Big]\t\left[\frac{\partial \bfPsi(\bftheta)}{\partial y_j}\bfPsi(\bftheta)^{-1}(\bfA(\bfy)\bfmu_\bfx-\bfb)-2\frac{\partial\bfA(\bfy)}{\partial y_j}\bfmu_x  \right].  \notag
\end{align}
While the data misfit term $\frac12 \|\bfA(\bfy) \bfmu_\bfx - \bfb\|_{{\bfPsi}(\bftheta)^{-1}}^2$ can often be evaluated efficiently using iterative methods like preconditioned Conjugate Gradient (PCG), the log-determinant term $\log\det({\bfPsi}( {\bftheta}))$ requires $\mathcal{O}(m^3)$ operations plus the cost of forming $\bfPsi(\bftheta)$, which usually requires several PDE solves. This renders exact evaluation prohibitive for large-scale problems. One alternative approach is to use stochastic trace estimation techniques as in~\cite{chung2024efficient}, which we briefly recap.

\subsection{Stochastic Trace Estimation}
\label{sec:trace_est}
To bypass the computational bottleneck of the log-determinant term, we define the isolated term
\begin{equation}
\label{eq:tracelog}
    F(\bftheta) := \log\det({\bfPsi}(\bftheta)) =  \trace(\log({\bfPsi}(\bftheta))),
\end{equation}
and seek to approximate it efficiently and in a matrix-free manner. 

\paragraph{Hutchinson Estimator}
Given $N$ independent Rademacher vectors $\bfw_i \in \{\pm1\}^m$, the Hutchinson estimator for the log-determinant is defined as
\begin{equation}\label{e:hutch}
\widehat{F}_{N}(\bftheta) := \frac{1}{N} \sum_{i=1}^N \bfw_i^\top \log(\bfPsi(\bftheta))\, \bfw_i .
\end{equation}
This estimator is unbiased, with expectation $\mathbb{E}[\widehat{F}_{N}(\bftheta)] = F(\bftheta)$. 
For any symmetric $\bfB \in \mathbb{R}^{m\times m}$ and $\bfw \in \{\pm1\}^m$, we have 
$\bfw^\top \bfB \bfw - \trace(\bfB) = \bfw^\top \overline{\bfB} \bfw,$
where $\overline{\bfB} := \bfB - \diag(\bfB)$ denotes the off-diagonal part of $\bfB$. Consequently, the variance is $\mathbb{V}[\widehat{F}_{N}(\bftheta)] = \frac{2}{N}\|\overline{\log (\bfPsi(\bftheta))}\|_F^2$~\cite{avron2011randomized}. The diagonal of $\bfB$ is recovered exactly, and the error depends only on the off-diagonal entries.

\paragraph{Stochastic Lanczos Quadrature (SLQ)}
Evaluating $\widehat{F}_{N}(\bftheta)$ still requires computing the matrix logarithm. To obtain an efficient approximation, we compute approximations to the quadratic forms $\bfw_i^\top \log(\bfPsi(\bftheta)) \bfw_i$ using $K$ steps of the symmetric Lanczos algorithm \cite{Ubaru_SLQ}. For each $\bfw_i,$ where  $1 \le i \le N$, the method constructs an orthonormal basis for the Krylov subspace $\mathcal{K}_K(\bfPsi(\bftheta), \bfw_i)$. The Lanczos method implicitly yields a polynomial $q_K$ of degree at most $K-1$ such that 
$$\bfw_i^\top \log(\bfPsi(\bftheta))\,\bfw_i \approx \bfw_i^\top q_K(\bfPsi(\bftheta))\,\bfw_i.$$
The resulting SLQ estimator for $F(\bftheta)$ is defined as
\begin{align}
    \label{e:slq}
\widehat{F}_{\mathrm{SL}}(\bftheta) = \frac{1}{N}\sum_{i=1}^N \bfw_i^\top q_K(\bfPsi(\bftheta))\bfw_i,
\end{align}
Unlike the standard Hutchinson estimator, SLQ introduces a deterministic bias due to the polynomial approximation, which we analyze in Section \ref{sec:analysis}.

\subsection{SAA for estimating hyperparameters}\label{sub:SAA}
Stochastic trace estimation approaches such as those described in \Cref{sec:trace_est} have been used for hyperparameter estimation \cite{chung2024efficient}.  Using \eqref{eq:tracelog}, optimization problem \eqref{eq:full_optimization} can be expressed as a stochastic optimization problem,
\begin{equation}
\label{eq:expopt}
    \min_{\bftheta\in \Theta} -\log\pi_{\bftheta}(\bftheta) + \frac12 \expect_{\bfw}[\bfw\t\log(\bfPsi(\bftheta))\bfw] + \frac12 \|\bfA(\bfy) \bfmu_\bfx - \bfb\|_{\bfPsi(\bftheta)^{-1}}^2,
\end{equation}
and an SAA approach can be used, in conjunction with the SLQ estimator.
That is, we can draw $N$ independent realizations of Rademacher random vectors $\bfw$  and replace the expected value (from the log-determinant term) directly with the SLQ estimator $\widehat{F}_{\mathrm{SL}}(\bftheta)$. This yields the  objective:
\begin{equation} \label{eq:empirical_F}
    \widehat{\mc{F}}_{\mathrm{SL}}(\bftheta) := -\log\pi_{\bftheta}(\bftheta) + \frac12 \widehat{F}_{\mathrm{SL}}(\bftheta)+ \frac{1}{2} \|\bfA(\bfy) \bfmu_\bfx - \bfb\|_{{\bfPsi}(\bftheta)^{-1}}^2 .
\end{equation}

Then, one can compute the empirical minimizer $\displaystyle \widehat{\bftheta} \in \arg\min_{\bftheta \in \Theta} \widehat{\mc{F}}_{\mathrm{SL}}(\bftheta)$ as a proxy for the true minimizer $\bftheta^*$, see  \eqref{eq:full_optimization}. We now discuss a way to quantify the suboptimality of this solution. 
\subsection{Excess Risk}\label{ssec:excess} Let $\widehat{\mc{F}}$ be an approximation to $\mc{F}$ with respective minimizers $\widehat{\bftheta} \in \argmin_{\bftheta \in \Theta} \widehat{\mc{F}}(\bftheta)$ and $\bftheta^* \in \argmin_{\bftheta \in \Theta}\mc{F}(\bftheta)$. For this discussion, $\widehat{\mc{F}}$ is an arbitrary approximation that need not arise from the SAA approach.  The suboptimality of the optimizer is quantified by the excess risk $\mc{F}(\widehat{\bftheta}) - \mc{F}(\bftheta^*)$.

The excess risk of the empirical minimizer $\widehat{\bftheta}$ is bounded by the uniform approximation error of the trace estimator over $\Theta$~\cite[Remark 4.2]{saibaba2025stochastic}:
\begin{align}\label{eq:risk}
\begin{aligned}
0 \le \mc{F}(\widehat{\bftheta}) - \mc{F}(\bftheta^*) 
&\leq 2 \sup_{\bftheta \in \Theta} |\mc{F}(\bftheta) - \widehat{\mc{F}}(\bftheta)|.
\end{aligned}
\end{align}

This result shows that the reliability of $\widehat{\bftheta}$ hinges on the worst-case deviation of $\widehat{\mc{F}}$ from $\mc{F}$. In Section~\ref{sec:analysis}, we focus on analyzing $\sup_{\bftheta \in \Theta} |\mc{F}(\bftheta) - \widehat{\mc{F}}(\bftheta)|$ in the SAA and the \mmmc approaches.  

\section{Majorization-Minimization for Estimating Hyperparameters} \label{sec:MM-SAA}
In this section, we describe a new method for hyperparameter estimation called the \mmmc method that avoids the log determinant and matrix logarithm computation entirely.  Instead, we follow an MM principle, where a specific majorization function can be used to majorize the log determinant term, and we combine it with a Monte Carlo approximation of the trace term for efficient optimization.

\subsection{Background on MM Algorithms}The underlying idea of an MM algorithm is to replace a hard optimization problem with a sequence of simpler ones.  Consider minimizing the objective function $\mc{F}(\bftheta)$ using an iterative method where $\bftheta_t$ is the current iterate. Then, the MM principle majorizes the objective function $\mc{F}(\bftheta)$ by a surrogate function $\mc{G}(\bftheta \mid \bftheta_t)$.  Conditions for the majorization are the tangency condition
$\mc{G}(\bftheta_t \mid \bftheta_t) = \mc{F}(\bftheta_t)$
and the domination condition
$\mc{G}(\bftheta \mid \bftheta_t) \geq \mc{F}(\bftheta)$ for all $\bftheta \in \Theta \subset \bbR^p. $
Then the next iterate of the MM algorithm is given by a minimizer of the surrogate function, i.e., 
\begin{equation}
\label{eq:mmiterate}
    \bftheta_{t+1} \in \arg\min_{\bftheta \in \Theta } \mc{G}(\bftheta \mid \bftheta_t).
\end{equation}

The MM algorithm is guaranteed to be a descent algorithm since
\begin{equation}
    \label{eq:mmdescent}
\mc{F}(\bftheta_{t+1}) \leq \mc{G}(\bftheta_{t+1} \mid \bftheta_t) \leq \mc{G}(\bftheta_t \mid \bftheta_t) = \mc{F}(\bftheta_{t}).\end{equation}
In practice, solving \eqref{eq:mmiterate} does not need to be done exactly, since the descent property \eqref{eq:mmdescent} only relies on the subsequent iterate satisfying $\mc{G}(\bftheta_{t+1} \mid \bftheta_t) \le \mc{G}(\bftheta_t \mid \bftheta_t)$. With certain regularity conditions, an MM algorithm is guaranteed to converge to a stationary point of the objective function \cite{lange2016mm}.

For \eqref{eq:full_optimization}, we begin by defining a majorization function for $\mc{F}(\bftheta)$. Recall that evaluating the log determinant term can be very expensive, so we aim to replace this term with something simpler to evaluate.  With a view towards an inner-outer optimization scheme, let $\{\bftheta_t\}_{t \ge 0}$ denote the sequence of outer iterations and let $\bfPsi_t = \bfPsi(\bftheta_t)$.  Then, consider the following majorization function for the log determinant term \cite[Example 3.2.6 and Equation (4.20)]{lange2016mm},
\begin{equation}
\label{eq:logdetmajorization}
 \logdet(\bfPsi(\bftheta)) \leq \logdet(\bfPsi_t) + \trace [\bfPsi_t^{-1} (\bfPsi(\bftheta) - \bfPsi_t)] := \mc{Q}(\bftheta \mid \bftheta_t).
\end{equation}
Thus, a majorizer for $\mc{F}(\bftheta)$ is given by
\begin{equation}
\label{eq:Gmm}
    \mc{G}(\bftheta \mid \bftheta_t) :=  -\log\pi_{\bftheta}(\bftheta) + \frac12 \mc{Q}(\bftheta \mid \bftheta_t)+ \frac12 \|\bfA(\bfy) \bfmu_\bfx - \bfb\|_{\bfPsi(\bftheta)^{-1}}^2, 
\end{equation}
and it is easily verified that $\mc{G}(\bftheta_t \mid\bftheta_t) = \mc{F}(\bftheta_t)$. 
Given the current iterate $\bftheta_t$, the  MM approach generates the next iterate by solving the optimization problem $\displaystyle \min_{\bftheta \in \Theta} \mc{G}(\bftheta\mid \bftheta_t)$. Equivalently, we can solve 
\begin{equation}
\label{eq:mmiterate_full}
    \bftheta_{t+1}  \in \arg\min_{\bftheta \in \Theta} -\log\pi_{\bftheta}(\bftheta) + \frac12 \underbrace{\trace (\bfPsi_t^{-1} \bfPsi(\bftheta))}_{:=G(\bftheta \mid \bftheta_t)} + \frac12 \|\bfA(\bfy) \bfmu_\bfx - \bfb\|_{\bfPsi(\bftheta)^{-1}}^2
\end{equation}
where the terms independent of $\bftheta$ have been removed. 

\subsection{The \texorpdfstring{\mmmc}{M3C} Method} \label{sub:m3c} Similar to the approach described in Section~\ref{sub:SAA}, various randomized trace estimators can be used here, resulting in a Monte Carlo approximation of the majorizer at each outer MM iteration.  Specifically, at iteration $t$, let
$$ G(\bftheta \mid \bftheta_t) \approx \frac{1}{N_t} \sum_{i=1}^{N_t} \underbrace{\bfw_i\t \bfPsi_t^{-1}}_{=\bfz_{t,i}\t} \bfPsi(\bftheta) \bfw_i  = \widehat{G}_{N_t}(\bftheta \mid \bftheta_t),$$
where $\bfw_1,\cdots,\bfw_{N_t}$ are $N_t$ independent Rademacher vectors. Using the above estimation, we form the approximate majorant,
\begin{equation}\label{eqn:surrmc}
    \widehat{\mc{G}}_{N_t}(\bftheta\mid\bftheta_t) \equiv -\log\pi_{\bftheta}(\bftheta) + \frac{1}{2 N_t} \sum_{i=1}^{N_t} \bfz_{t,i}\t \bfPsi(\bftheta) \bfw_i + \frac12 \|\bfA(\bfy) \bfmu_\bfx - \bfb\|_{\bfPsi(\bftheta)^{-1}}^2.
\end{equation}
The gradient takes form (for $1 \le j \le p$)
\begin{align}
    \frac{\partial}{\partial \theta_j}\widehat{\mc{G}}_{N_t}(\bftheta&\mid\bftheta_t) 
    = -\frac{1}{\pi_{\bftheta}(\bftheta)}\frac{\partial\log\pi_{\bftheta}(\bftheta)}{\partial\theta_j} + \frac{1}{2 N_t} \sum_{i=1}^{N_t} \bfz_{t,i}\t \frac{\partial \bfPsi(\bftheta)}{\partial \theta_j} \bfw_i \label{eq:gradientMMMC}\\ -& \frac{1}{2}\Big[ \bfPsi(\bftheta)^{-1}\left(\bfA(\bfy)\bfmu_\bfx-\bfb\right)\Big]\t\left[\frac{\partial \bfPsi(\bftheta)}{\partial \theta_j}\bfPsi(\bftheta)^{-1}(\bfA(\bfy)\bfmu_\bfx-\bfb)-2\frac{\partial\bfA(\bfy)}{\partial \theta_j}\bfmu_x  \right]. \notag
\end{align}
A summary of the proposed approach for hyperparameter estimation denoted \mmmc can be found in Algorithm \ref{alg:MM}.

\begin{algorithm}[!ht]
\caption{\mmmc Algorithm for Hyperparameter Estimation}
\label{alg:MM}
\begin{algorithmic}[1]
    \Require Matrix $\mathbf{A}$, data vector $\mathbf{d}$, initial parameters $\bftheta_0 = \begin{bmatrix} \boldsymbol{\psi}_0^\top & \mathbf{y}_0^\top \end{bmatrix}^\top$

      
    \For{$t = 0, 1, 2, \ldots$} \Comment{Outer loop: Majorization}
        \State Solve: $\mathbf{\Psi}_t \mathbf{z}_{t,i} = \mathbf{w}_i $ for $i=1, \ldots, N_t$ 
        \Statex 

        \State {Update:} $\displaystyle \bftheta_{t+1} \gets
        \argmin_{\bftheta \in \Theta} \widehat{\mc{G}}_{N_t}(\bftheta\mid\bftheta_t) $ \Comment{Inner loop: Minimization}
    \EndFor
\end{algorithmic}
\end{algorithm}

The computational advantage of the \mmmc approach compared to the SAA approach is that we avoid the log determinant and hence the matrix logarithm entirely.  Although it may seem undesirable to introduce an inner-outer optimization scheme, the benefit is that the surrogate functions and their gradients are easier to evaluate.  Linear solves can be computed once and reused in multiple inner iterations.  Moreover, the surrogate optimization problem does not need to be solved exactly, so the number of inner iterations can be reduced.  Numerical investigations are provided in \Cref{sec:NumEx}. Additional constraints on the hyperparameters (e.g., bound constraints) can be included in the inner minimization problem.

\subsection{Computational Considerations}\label{ssec:compconsider}


Although there are a variety of ways to compute $\widehat{\mc{G}}_{N_t}(\bftheta\mid \bftheta_t)$ and its gradient, we consider one approach to outline the computational cost of the \mmmc method.
We assume that the cost of computing matvecs with $\bfA$ and its transpose is $T_{A}$ flops. Similarly, the cost of computing matvecs with $\bfQ$ and its derivatives is $T_Q$ flops.
We can assume that $\bfR$ is diagonal, so the cost of a matvec with it or its inverse is $\mc{O}(m)$ flops. With these assumptions, a matvec with $\bfPsi$ is $T_\Psi = 2T_A + T_Q + \mc{O}(m)$ flops. 
A discussion of the computational costs for the \mmmc approach follows:

\begin{enumerate}
    \item \textbf{Outer loop precomputation:} At each outer iteration $t$, $\bfz_{t,i} = \bfPsi_t^{-1}\bfw_i$ can be precomputed for all $1 \le i \le N_t, t\ge 0$ using PCG with an appropriately defined preconditioner $\bfG$ that satisfies $\bfG\t\bfG \approx \bfPsi(\bftheta)^{-1}$; see, e.g.,~\cite{chung2024efficient}, and reused during the inner iterations.  Let $T_G$ denote the cost of computing matvecs with $\bfG$, then the cost is $c_t N_tT_\Psi$ where $c_t$ is the total number of PCG iterations. It costs an additional $N_t((2c_t+2)T_G)$ flops to use a preconditioner: one matvec for $\bfG\bfw_i$ then 2 matvecs with $\bfG$ at every PCG iteration, followed by a matvec with $\bfG\t$.
    \item \textbf{Objective function:} Using the precomputed $\bfz_{t,i}$, the Monte Carlo trace estimation can be computed in $N_t(T_\Psi + \mc{O}(m))$ flops. The cost of computing $\bfr = \bfPsi(\bftheta)^{-1}(\bfA(\bfy)\bfmu_\bfx - \bfb)$ using PCG is $(c+2)T_G + cT_\Psi$ flops where $c$ is the number of PCG iterations. Then, an additional $\mc{O}(m)$ flops are needed to compute $(\bfA(\bfy)\bfmu_\bfx-\bfb)\t\bfr$.
    \item \textbf{Gradient computation:} 
    \begin{itemize}
        \item Option (a): Here we use the gradient given in \eqref{eq:gradientMMMC} where computations with $\frac{\partial \bfPsi(\bftheta)}{\partial\theta_j}$, $\frac{\partial \bfPsi(\bftheta)}{\partial\theta_j}$, and $\frac{\partial \bfA(\bfy)}{\partial\theta_j}$ are approximated using finite difference. Using the precomputed $\bfz_{t,i}$, we can approximate \, $\sum_{i=1}^{N_t}\bfz_{t,i}\t\frac{\partial \bfPsi(\bftheta)}{\partial\theta_j}\bfw_i$ in $p N_t(2T_\Psi + \mc{O}(m))$ flops. Reusing $\bfr$ from above, $\bfr\t\frac{\partial \bfPsi(\bftheta)}{\partial\theta_j}\bfr$ can be approximated in $p(2T_\Psi + \mc{O}(m))$ and $\bfr\t\frac{\partial \bfA(\bfy)}{\partial\theta_j}\bfmu_\bfx$ can be approximated in $p(2T_A + \mc{O}(m))$ flops.
        \item Option (b): Alternatively, we can use finite difference on the entire objective function. That is, $[\nabla \widehat{\mc{G}}_{N_t}(\bftheta\mid \bftheta_t)]_j \approx \frac{\widehat{\mc{G}}_{N_t}(\bftheta + \bfh_j\mid \bftheta_t) - \widehat{\mc{G}}_{N_t}(\bftheta\mid \bftheta_t)}{\epsilon}$ where $\bfh_j = \epsilon\bfe_j$ is sufficiently small. The cost of this option is $p(T_{\rm M^3C} + 2)$ where $T_{\rm M^3C}$ is the cost of one objective function evaluation.
    \end{itemize}
    
\end{enumerate}
Next we compare the computational costs of \mmmc with those of the SAA method, which can be found in the Supplementary Materials (\Cref{supp-sec:SAA}). The \mmmc outer loop precomputation has a similar cost to computing the log-determinant in the SAA approach, but is not required at every iteration. In total, the objective function evaluation for the \mmmc method is computationally cheaper than the SAA method. The cost of computing the gradient is the same for each method. However, for the \mmmc approach, the gradient requires more flops than the objective function evaluation, making it the main computational cost.

\section{Analysis}
\label{sec:analysis}
We provide convergence analysis for the SAA approach in \Cref{sub:SAAconvergence} and for the \mmmc approach in \Cref{sec:convergenceMMMC}.

\subsection{Analysis of the SAA Approach}
\label{sub:SAAconvergence}
We now establish the analysis of the SAA for the objective $\mc{F}(\bftheta)$, see \eqref{eq:F}. We begin by introducing regularity conditions on the parameter space $\Theta$ and the matrix-valued mapping $\bfPsi(\bftheta)$. These assumptions ensure that the mapping varies smoothly with respect to $\bftheta\in \Theta$. 

\begin{assumption}[Parameter Space]\label{ass:theta}
The parameter set $\Theta \subset \mathbb{R}^p$ is compact (i.e., closed and bounded), and therefore can always be enclosed by a closed Euclidean ball of radius $r > 0$. That is, there exists a center $\bftheta_c \in \mathbb{R}^p$ such that 
$$ \Theta \subseteq \left\{ \bftheta \in \mathbb{R}^p : \|\bftheta - \bftheta_c\|_2 \le r \right\}. $$
\end{assumption}

This boundedness assumption can be justified in two different ways. First, in some applications, the hyperprior has compact support (e.g., a uniform prior) and, thus, the parameter space is naturally bounded. Second, even if the hyperprior does not have compact support, some \textit{a priori} knowledge may be available for the hyperparameters, which can be safely enforced via a constrained optimization framework without loss of generality.

Next, we codify an assumption regarding the regularity of $\bfPsi$. Let $\mathbb{S}_{++}^m$ denote the cone of $m\times m$ SPD matrices. For two symmetric matrices $\bfC, \bfD$, the notation $\bfC \preceq \bfD$ (or $\bfD \succeq \bfC$) implies $\bfD-\bfC$ is positive semidefinite. Similarly, $\bfC \prec \bfD$ or $\bfD \succ \bfC$ implies $\bfD-\bfC$ is positive definite.

\begin{assumption}[Regularity of $\bfPsi$]\label{ass:psi}
The mapping $\bfPsi : \Theta \to \mathbb{S}_{++}^m$ satisfies:
\begin{enumerate}
\item[(i)] (Lipschitz continuity) 
There exists $L_\Psi > 0$ such that$$\|\bfPsi(\bftheta) - \bfPsi(\bftheta')\|_2 \le L_\Psi \|\bftheta - \bftheta'\|_2, \qquad \forall \, \bftheta, \bftheta' \in \Theta.$$
\item[(ii)] (Uniform spectral bounds) 
There exist constants 
$0 < \alpha \le \beta$ such that
$$\alpha I \preceq \bfPsi(\bftheta) \preceq \beta I, \qquad \forall \, \bftheta \in \Theta.$$
\end{enumerate}
We also denote $\kappa_\infty := \beta/\alpha$ as the uniform condition number.
\end{assumption}

Under Assumption~\ref{ass:psi}, the matrix logarithm $\log(\bfPsi(\bftheta))$ is well-defined for all $\bftheta \in \Theta$, and the identity \cref{eq:tracelog}
holds.
To establish uniform convergence over the space defined in Assumption~\ref{ass:theta}, we employ the covering number $N(\eta,\Theta,\|\cdot\|_2)$, defined as the minimal number of Euclidean balls of radius $\eta$ required to cover $\Theta$~\cite[Definition 4.2.2]{vershynin2018high}. For $\eta < r$, the covering number of a $p$-dimensional ball of radius $r$ satisfies\begin{align}\label{def:covTheta}N(\eta,\Theta,\|\cdot\|_2) \le \left(\frac{3r}{\eta}\right)^p .\end{align}

%

%



\paragraph{Lipschitz continuity of the matrix logarithm over $\Theta$}
Under the uniform spectral bound $\bfPsi(\bftheta) \succeq \alpha I$ for all 
$\bftheta \in \Theta$, see \Cref{ass:psi}, the matrix logarithm is Lipschitz continuous along the image of $\bfPsi$ as shown by the next result.

\begin{proposition}[Lipschitz continuity of matrix logarithm]
\label{prop:log-lipschitz}
Suppose $\bfPsi : \Theta \to \mathbb{S}_{++}^m$ satisfies
\(
\alpha I \preceq \bfPsi(\bftheta)
\)
for all $\bftheta \in \Theta$. Then, for any 
$\bftheta_1, \bftheta_2 \in \Theta$,
\[
\|\log (\bfPsi(\bftheta_1)) - \log (\bfPsi(\bftheta_2))\|_2
\le
\frac{1}{\alpha}
\|\bfPsi(\bftheta_1) - \bfPsi(\bftheta_2)\|_2.
\]
In particular, $\bftheta \mapsto \log (\bfPsi(\bftheta))$ is Lipschitz continuous with constant $L_\Psi/\alpha$.
\end{proposition}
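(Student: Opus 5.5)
We plan to prove this with the integral representation of the matrix logarithm. For any SPD matrix $\bfB$ we have
\[
\log(\bfB) = \int_0^\infty \left[ (1+s)^{-1} I - (\bfB + sI)^{-1} \right] ds,
\]
which follows from the scalar identity $\log b = \int_0^\infty \bigl[(1+s)^{-1} - (b+s)^{-1}\bigr] ds$ for $b>0$, applied eigenvalue by eigenvalue in the spectral decomposition of $\bfB$. Set $\bfB_i = \bfPsi(\bftheta_i)$ for $i=1,2$. Subtracting the two representations removes the $(1+s)^{-1}I$ terms and leaves
\[
\log(\bfB_1) - \log(\bfB_2) = \int_0^\infty \left[ (\bfB_2+sI)^{-1} - (\bfB_1+sI)^{-1} \right] ds .
\]
Because the $(1+s)^{-1}$ terms cancel, the remaining integrand decays like $s^{-2}$, so the integral converges absolutely in operator norm.

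The next step is the resolvent identity
\[
(\bfB_2+sI)^{-1} - (\bfB_1+sI)^{-1} = (\bfB_2+sI)^{-1}(\bfB_1-\bfB_2)(\bfB_1+sI)^{-1}.
\]
By the hypothesis $\bfB_i \succeq \alpha I$, each factor satisfies $\|(\bfB_i+sI)^{-1}\|_2 \le (\alpha+s)^{-1}$. Submultiplicativity then bounds the norm of the integrand by $\|\bfB_1-\bfB_2\|_2/(\alpha+s)^2$. We pull the norm inside the integral and use $\int_0^\infty (\alpha+s)^{-2}\,ds = 1/\alpha$, which gives the bound
\[
\|\log(\bfB_1)-\log(\bfB_2)\|_2 \le \frac{1}{\alpha}\|\bfB_1-\bfB_2\|_2 .
\]
The ``in particular'' statement follows by combining this with \Cref{ass:psi}(i).

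The main technical point is to justify the integral representation and to move the norm inside the integral. The representation is an exact diagonal computation. Moving the norm inside is the standard inequality $\|\int f\| \le \int \|f\|$ for Bochner or Riemann integrals of continuous matrix-valued functions. For this we need the integrand to be integrable at infinity, which the resolvent bound already provides.

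An alternative route avoids integrals. Write $\log(\bfB_1)-\log(\bfB_2) = \int_0^1 D\log(\bfB(\tau))[\bfB_1-\bfB_2]\,d\tau$ along the segment $\bfB(\tau) = \bfB_2 + \tau(\bfB_1-\bfB_2)$. Every point of this segment satisfies $\bfB(\tau) \succeq \alpha I$, since the set $\{\bfB : \bfB \succeq \alpha I\}$ is convex. One would then bound the Fréchet derivative using Daleckii–Krein divided differences, $|\log a - \log b|/|a-b| \le 1/\alpha$. This route needs an extra argument to pass from entrywise Schur-multiplier bounds to the spectral norm, so we would use the resolvent argument above.
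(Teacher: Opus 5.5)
Your proof is correct and gives the same constant $1/\alpha$, but it is organized differently from the paper's.

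\textbf{The paper's route.} The paper works infinitesimally. It quotes Higham's integral formula for the Fr\'echet derivative, $L_{\mathrm{log}}(\mathbf{B},\mathbf{E})=\int_0^1 (\tau(\mathbf{B}-\mathbf{I})+\mathbf{I})^{-1}\mathbf{E}\,(\tau(\mathbf{B}-\mathbf{I})+\mathbf{I})^{-1}\,d\tau$. It bounds this by $\|\mathbf{E}\|_2/\alpha$ for every $\mathbf{B}\succeq\alpha\mathbf{I}$, using $\int_0^1(\tau(\alpha-1)+1)^{-2}d\tau=1/\alpha$. It then integrates along the segment from $\bfPsi(\bftheta_2)$ to $\bfPsi(\bftheta_1)$, using convexity of $\{\mathbf{B}\succeq\alpha\mathbf{I}\}$ and the fundamental theorem of calculus. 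So the paper essentially takes your ``alternative route,'' but bounds the derivative with an integral formula rather than Daleckii--Krein divided differences. That sidesteps the Schur-multiplier issue you flagged.

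\textbf{Your route.} You apply the resolvent identity directly to the secant and obtain the exact two-point formula $\log\mathbf{B}_1-\log\mathbf{B}_2=\int_0^\infty(\mathbf{B}_2+s\mathbf{I})^{-1}(\mathbf{B}_1-\mathbf{B}_2)(\mathbf{B}_1+s\mathbf{I})^{-1}\,ds$. This needs neither differentiability of the matrix logarithm, nor a cited derivative formula, nor the path and convexity step. You use only the scalar identity, the spectral theorem, and $\int_0^\infty(\alpha+s)^{-2}ds=1/\alpha$.

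\textbf{How they relate.} The two kernels are really the same object. Substituting $\tau=1/(1+s)$ turns the paper's formula into $\int_0^\infty(\mathbf{B}+s\mathbf{I})^{-1}\mathbf{E}\,(\mathbf{B}+s\mathbf{I})^{-1}\,ds$. That is your kernel with both resolvents evaluated at the same point.

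\textbf{What each buys.} The paper's version also gives a pointwise derivative (condition-number) bound for $\log$, and a mean-value template that works for any smooth matrix function with such a bound. Yours is shorter and fully self-contained, and it carries over verbatim to any function with a Stieltjes-type resolvent representation. You are also right that the ``in particular'' claim needs \Cref{ass:psi}(i), which the paper's proof uses only implicitly.
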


\begin{proof}
The Fréchet derivative of the matrix logarithm has the integral representation \cite[Eq.~(11.10)]{HighamNicholas}
\[
L_{\mathrm{log}}(\bfB,\bfE) =
\int_0^1 (s(\bfB-\bfI)+\bfI)^{-1} \bfE (s(\bfB-\bfI)+\bfI)^{-1} \, ds.
\]
For any $\bfB \succeq \alpha \bfI \succ \mathbf{0}$, the matrix inside the inverse satisfies 
\[
s\bfB + (1-s)\bfI \succeq (s(\alpha-1)+1) \bfI\succ  \min\{1,\alpha\}\bfI.
\]
Since the induced 2-norm of an inverse SPD matrix is the reciprocal of its smallest eigenvalue, we have 
$\|(s(\bfB-\bfI)+\bfI)^{-1}\|_2 \le (s(\alpha-1)+1)^{-1}$.
Taking the norm of $L_{\mathrm{log}}(\bfB,\bfE)$ yields
\[
\|L_{\mathrm{log}}(\bfB,\bfE)\|_2 \le \|\bfE\|_2 \int_0^1 \frac{ds}{(s(\alpha-1)+1)^2} = \frac{1}{\alpha}\|\bfE\|_2.
\]

Now, let $\bfE = \bfPsi(\bftheta_1) - \bfPsi(\bftheta_2)$ and define the path 
$\bfB(t) = \bfPsi(\bftheta_2) + t\bfE$ for $t \in [0,1]$. 
Note that by convexity, $\bfB(t) \succeq \alpha \bfI$. 
We define the matrix-valued function $\bfG(t) = \log(\bfB(t))$. By the definition of the Fréchet derivative \cite[Sec.~3.1]{HighamNicholas}, the derivative of $\bfG$ with respect to  $t$ is
\[
\frac{d}{dt} \bfG(t) = L_{\mathrm{log}}(\bfB(t), \bfE).
\]
Using the fundamental theorem of calculus,  yields:
\[
\log (\bfPsi(\bftheta_1)) - \log (\bfPsi(\bftheta_2)) = \bfG(1) - \bfG(0) = \int_0^1 \frac{d}{dt} \bfG(t) \, dt = \int_0^1 L_{\mathrm{log}}( \bfB(t), \bfE ) \, dt.
\]
Taking the 2-norm of both sides, and applying our bound for $\|L_{\mathrm{log}}\|_2$ yields
\[
\|\log (\bfPsi(\bftheta_1)) - \log (\bfPsi(\bftheta_2))\|_2 \le \int_0^1 \| L_{\mathrm{log}}(\bfB(t), \bfE) \|_2 \, dt \le \int_0^1 \frac{1}{\alpha} \|\bfE\|_2 \, dt = \frac{1}{\alpha} \|\bfE\|_2,
\]
which completes the proof.
\end{proof}

We now state some results on the number of SLQ iterations and the concentration of the SLQ estimator, which are obtained from~\cite{CortinovisKressner}. We will use the notation that $\kappa(\bfM) = \|\bfM\|_2 \|\bfM^{-1}\|_2$, for an invertible matrix $\bfM$. 

\begin{lemma}[SLQ vs.\ log-det error]\label{lem:slq-vs-logdet}
Fix $\bftheta \in \Theta$ and define $\bfM := \bfPsi(\bftheta) \in \mathbb{S}_{++}^m$.
Let $\widehat{F}_N(\bftheta)$ and $\widehat{F}_{\mathrm{SL}}(\bftheta)$ be as in \eqref{e:hutch}--\eqref{e:slq}.
Then the following statements hold.

\begin{enumerate}
\item[(i)] \textbf{Deterministic SLQ--Hutchinson error.}
For any $\varepsilon > 0$, if
\begin{align}\label{def:k0}
k \;\ge\; \frac{\sqrt{\kappa(\bfM)+1}}{4}
\log\!\left(
4\varepsilon^{-1} m \bigl(\sqrt{\kappa(\bfM) +1}+ 1\bigr)
\log\bigl(2\kappa(\bfM)\bigr)
\right),
\end{align}
then
\[
\bigl| \widehat{F}_{N}(\bftheta) -\widehat{F}_{\mathrm{SL}}(\bftheta) \bigr|
\;\le\; \frac{\varepsilon}{2}.
\]
\item[(ii)] \textbf{Concentration of the SLQ estimator.}
Assume that $k$ satisfies \eqref{def:k0}, and that
\begin{align}
N \;\ge\; 32
\left(
 \,\varepsilon^{-2} \|\overline{\log \bfM} \|^2_F + \frac{\varepsilon^{-1}}{2}  \|\overline{\log \bfM} \|_2
\right)
\log\!\left(\frac{2}{\delta}\right).
\end{align}
Then, with probability at least $1-\delta$,
\[
|   \widehat{\mc{F}}_{\mathrm{SL}}(\bftheta) -\mc{F}(\bftheta)|
=
\bigl|
\widehat{F}_{\mathrm{SL}}(\bftheta)
-
\log\det\bigl(\bfPsi(\bftheta)\bigr)
\bigr|
\;\le\; \varepsilon.
\]

\end{enumerate}

\end{lemma}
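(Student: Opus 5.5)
Both parts will be obtained by specializing the SLQ analysis of \cite{CortinovisKressner} to $\bfM=\bfPsi(\bftheta)$ and then combining the two error sources by the triangle inequality.

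\textbf{Part (i).} This is a per-vector polynomial approximation bound. The Lanczos quadrature with $k$ steps is exact for polynomials of degree $2k-1$. Hence, for each Rademacher vector $\bfw_i$ (with $\|\bfw_i\|_2^2=m$), the quadrature error $|\bfw_i\t\log(\bfM)\bfw_i-\bfw_i\t q_k(\bfM)\bfw_i|$ is bounded by $2m$ times the best uniform polynomial approximation error of $\log$ on $[\lambda_{\min},\lambda_{\max}]$.

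I will first rescale $\bfM$ by $\lambda_{\max}$, which shifts the logarithm by a constant and moves the interval to $[1/\kappa,1]$. I will then map this interval affinely to $[-1,1]$. Because $\log$ is analytic in a Bernstein ellipse whose parameter $\rho$ is determined by $\kappa$, Chebyshev truncation gives an error of order $\frac{4M_\rho}{\rho-1}\rho^{-2k}$. Here $\rho=\frac{\sqrt{\kappa+1}+1}{\sqrt{\kappa+1}-1}$, or the analogous choice in \cite{CortinovisKressner}, and $M_\rho$ bounds $|\log|$ on the ellipse, which is of size $\log(2\kappa)$.

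Using $\log\rho\ge 2/\sqrt{\kappa+1}$, the requirement $2m\cdot(\text{error})\le\varepsilon/2$ rearranges to exactly the condition \eqref{def:k0}. Averaging over the $N$ samples preserves this bound, since the bound is uniform in $i$. This constant-chasing is the step I expect to be most delicate, and I would follow Cortinovis--Kressner's Theorem on the SLQ error verbatim rather than rederive the constants.

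\textbf{Part (ii).} I will write
\[
\widehat{F}_{\mathrm{SL}}-\log\det\bfM=(\widehat F_{\mathrm{SL}}-\widehat F_N)+(\widehat F_N-\trace\log\bfM),
\]
using \eqref{eq:tracelog}. By part (i), the first term is at most $\varepsilon/2$ deterministically.

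For the second term, the identity $\bfw\t\bfB\bfw-\trace\bfB=\bfw\t\overline{\bfB}\bfw$ reduces it to an average of Rademacher chaos in the off-diagonal matrix $\overline{\log\bfM}$. The Hanson--Wright-type tail bound for the Hutchinson estimator from \cite{CortinovisKressner} gives
\[
\Pr\bigl(|\widehat F_N-\trace\log\bfM|\ge\varepsilon/2\bigr)\le 2\exp\!\left(-\frac{N\varepsilon^2}{32(\|\overline{\log\bfM}\|_F^2+\varepsilon\|\overline{\log\bfM}\|_2/2)}\right).
\]
Setting this probability at most $\delta$ yields the stated lower bound on $N$.

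Adding the two halves gives error at most $\varepsilon$ with probability at least $1-\delta$. Finally, the equality $\widehat{\mc F}_{\mathrm{SL}}-\mc F=\widehat F_{\mathrm{SL}}-\log\det$ holds because the prior and misfit terms cancel; note that the factor $\frac12$ in \eqref{eq:empirical_F} only improves the bound.
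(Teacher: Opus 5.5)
Your proposal is correct and takes essentially the paper's approach. The paper's proof just cites Cortinovis--Kressner (Corollary 3 for (i), Theorem 5 for (ii)), and your sketch unpacks exactly those results: the Gauss-quadrature/Chebyshev argument for (i), and the $\varepsilon/2$ split with the Rademacher tail bound, which reproduces the stated constant $32\bigl(\varepsilon^{-2}\|\overline{\log\bfM}\|_F^2+\tfrac{\varepsilon^{-1}}{2}\|\overline{\log\bfM}\|_2\bigr)$, for (ii). Strictly, $\widehat{\mc{F}}_{\mathrm{SL}}(\bftheta)-\mc{F}(\bftheta)=\tfrac12\bigl(\widehat{F}_{\mathrm{SL}}(\bftheta)-\log\det\bfM\bigr)$, so the displayed equality in the statement holds only up to that factor; as you note, the factor only helps the bound.
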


\begin{proof}
Part (i) follows from \cite[Corollary~3]{CortinovisKressner}, and
 (ii)  from \cite[Theorem~5]{CortinovisKressner}.
\end{proof}

This result can be extended to the entire space $\Theta$ via a covering argument, following the approach in~\cite{saibaba2025stochastic}.

\paragraph{Uniform Error over $\Theta$}   
Since the stochasticity in the empirical objective $\widehat{\mc{F}}_{\mathrm{SL}}(\bftheta)$ in \eqref{eq:empirical_F} enters only through the log-determinant term, we focus on the convergence of its SAA approximation. Thus, we aim to bound the uniform approximation error 
$$\sup_{\bftheta \in \Theta} | \widehat{F}_{\mathrm{SL}}(\bftheta) - \log\det(\bfPsi(\bftheta)) |,$$
with high probability. For $0 < \eta < r$, the parameter space $\Theta$ is covered by an $\eta$-net $\mc{T} = \{\bftheta_1, \dots, \bftheta_S\}$ with cardinality $S \le (3r/\eta)^p$. We then select the number of Monte Carlo samples $N$ and Lanczos steps $K$ such that the SLQ estimator is accurate across all $\bftheta \in \mc{T}$ via a union bound. The result is then extended to the continuum $\Theta$ by leveraging the regularity of $\bfPsi$ from \Cref{ass:psi} to control the discretization error.

\begin{theorem}[Uniform SLQ Error]
\label{thm:uniform-slq}
Let $\delta \in (0,1)$ and $\varepsilon > 0$.
Suppose the SLQ estimator $\widehat{F}_{\mathrm{SL}}(\bftheta)$ is computed as in \eqref{e:slq} using $K$ Lanczos steps and $N$ independent Rademacher vectors. Under Assumptions~\ref{ass:theta} and~\ref{ass:psi}, if $K$ and $N$ satisfy
\begin{align}
    K \;\ge\;&
\frac{\sqrt{\kappa_\infty + 1}}{4}
\log\!\Bigl(
4\cdot \frac{5}{2}\, \varepsilon^{-1}
m (\sqrt{\kappa_\infty+1}+1)
\, \log\!\bigl(2\kappa_\infty\bigr)
\Bigr)\\
N \;\ge\; &
32
\left(\frac{25}{4}\varepsilon^{-2}  \max_{\bftheta\in \Theta}\|\overline{\log \bfPsi(\bftheta)} \|^2_F
+ \frac{5}{2}\varepsilon^{-1}\, \max_{\bftheta\in \Theta}\|\overline{\log \bfPsi(\bftheta)} \|_2
\right)\log\!\left(\frac{2\gamma}{\delta}\right),
\end{align}
where  $\gamma = \max\{(3r/\eta)^p,1\}$ and $\eta = \alpha \varepsilon / (5 m L_\Psi)$. 
Then, with probability at least $1-\delta$,
\[
\max_{\bftheta \in \Theta} 
\bigl| \widehat{F}_{\mathrm{SL}}(\bftheta) - \log\det(\bm{\Psi}(\bftheta)) \bigr|
\le \varepsilon.
\]

\end{theorem}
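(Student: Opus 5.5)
The argument is a covering (net) argument with target accuracy $\varepsilon/5$ per piece. The factors $5/2$ and $25/4$ in the conditions on $K$ and $N$ are exactly those of \Cref{lem:slq-vs-logdet} with $\varepsilon$ replaced by $2\varepsilon/5$. Thus at each net point the SLQ error will be at most $2\varepsilon/5$, and the remaining $3\varepsilon/5$ is budgeted for moving from a net point to an arbitrary $\bftheta$.

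First, fix $\eta = \alpha\varepsilon/(5mL_\Psi)$ and take an $\eta$-net $\mc{T}=\{\bftheta_1,\dots,\bftheta_S\}\subset\Theta$ with $S\le\gamma$, using \eqref{def:covTheta} and \Cref{ass:theta}. If $\eta\ge r$, a single point suffices, hence the $\max\{\cdot,1\}$. If the centers of the covering balls are not in $\Theta$, I would pass to an internal net at radius $2\eta$, or simply accept the standard convention; I will not dwell on this. For each $\bftheta_s$, $\kappa(\bfPsi(\bftheta_s))\le\kappa_\infty$ by \Cref{ass:psi}(ii). The lower bound in \eqref{def:k0} is increasing in $\kappa$, so the assumed $K$ satisfies \eqref{def:k0} with $\varepsilon\to 2\varepsilon/5$ at every net point. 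Likewise the assumed $N$ dominates the pointwise requirement of \Cref{lem:slq-vs-logdet}(ii) with $\varepsilon\to2\varepsilon/5$ and $\delta\to\delta/\gamma$, since $\|\overline{\log\bfPsi(\bftheta_s)}\|$ is bounded by the maximum over $\Theta$. A union bound over the $S\le\gamma$ points then gives, with probability at least $1-\delta$, $|\widehat F_{\mathrm{SL}}(\bftheta_s)-\logdet\bfPsi(\bftheta_s)|\le 2\varepsilon/5$ for all $s$.

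Next, extend to arbitrary $\bftheta\in\Theta$ by picking $\bftheta_s$ with $\|\bftheta-\bftheta_s\|_2\le\eta$ and splitting
\[
|\widehat F_{\mathrm{SL}}(\bftheta)-\logdet\bfPsi(\bftheta)|\le |\widehat F_{\mathrm{SL}}(\bftheta)-\widehat F_{N}(\bftheta)| + |\widehat F_{N}(\bftheta)-\widehat F_{N}(\bftheta_s)| + |\widehat F_{N}(\bftheta_s)-\logdet\bfPsi(\bftheta_s)| + |\logdet\bfPsi(\bftheta_s)-\logdet\bfPsi(\bftheta)|.
\]
The first term is at most $\varepsilon/5$ by \Cref{lem:slq-vs-logdet}(i). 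That part is deterministic and holds at every $\bftheta$, since $\kappa(\bfPsi(\bftheta))\le\kappa_\infty$; this is why the $K$ bound carries the factor $5/2$, giving $\varepsilon/5<2\varepsilon/5$.

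The two Lipschitz terms both use \Cref{prop:log-lipschitz}. For Rademacher $\bfw_i$ we have $\|\bfw_i\|_2^2=m$, so each quadratic form changes by at most $m\|\log\bfPsi(\bftheta)-\log\bfPsi(\bftheta_s)\|_2\le mL_\Psi\eta/\alpha=\varepsilon/5$. The same bound holds for the trace via $|\trace(\bfE)|\le m\|\bfE\|_2$. So each of these terms is at most $\varepsilon/5$.

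The third term needs care. The union bound controls $\widehat F_{\mathrm{SL}}$ at the net points, not $\widehat F_N$, so I would instead route through $\widehat F_{\mathrm{SL}}(\bftheta_s)$. This costs $|\widehat F_N(\bftheta_s)-\widehat F_{\mathrm{SL}}(\bftheta_s)|\le\varepsilon/5$ extra, and the bookkeeping must be arranged so the total is $\varepsilon$. Concretely, I would control the Hutchinson error $|\widehat F_N(\bftheta_s)-\logdet\bfPsi(\bftheta_s)|\le\varepsilon/5$ directly from the concentration underlying \Cref{lem:slq-vs-logdet}(ii), because that bound is precisely of Hutchinson type. The sum is then $\varepsilon/5\cdot 4<\varepsilon$, or exactly $\varepsilon$ with the $2\varepsilon/5$ split. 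I expect this constant bookkeeping — matching the $5/2$ factors to a clean four- or five-way split of $\varepsilon$ — to be the main obstacle. The rest is the standard net-plus-Lipschitz argument.
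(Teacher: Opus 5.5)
Your final argument is correct, and its decomposition differs from the paper's in a way that matters for the constants.

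The paper bounds the error at the worst point $\bftheta_h$ by $\gamma_1+\gamma_2+\gamma_3$. Here $\gamma_2=|\widehat F_{\mathrm{SL}}(\bftheta_0)-\log\det\bfPsi(\bftheta_0)|$ is taken from \Cref{lem:slq-vs-logdet}(ii) as a black box, and $\gamma_1$ contains the Lanczos errors at both $\bftheta_h$ and $\bftheta_0$. So the Lanczos error at the net point is paid twice. You pass through $\widehat F_N(\bftheta_s)$ instead. The only random event is then Hutchinson-only, the deterministic Lanczos error of part (i) is paid once, and the total is $4\varepsilon/5$.

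As you noticed, the stated $K$ and $N$ support part (ii) only at accuracy $2\varepsilon/5$. Accuracy $\varepsilon/5$ would need $4\cdot 5\,\varepsilon^{-1}$ in the logarithm in $K$ and $25\varepsilon^{-2}$ in place of $\tfrac{25}{4}\varepsilon^{-2}$ in $N$. With these $K$ and $N$, the paper's three-term split therefore gives only $\gamma_2\le 2\varepsilon/5$ and a total of $6\varepsilon/5$. That is exactly the overshoot you hit when routing through $\widehat F_{\mathrm{SL}}(\bftheta_s)$. Your route is what makes the stated constants work.

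The price is that you cannot cite part (ii) as stated; you must use the Rademacher Hutchinson bound underneath it. Part (ii)'s sample condition at accuracy $\varepsilon'$ is exactly $N\ge 8\epsilon^{-2}(\|\overline{\log\bfPsi}\|_F^2+\epsilon\|\overline{\log\bfPsi}\|_2)\log(2/\delta')$ with $\epsilon=\varepsilon'/2$. Taking $\epsilon=\varepsilon/5$ and $\delta'=\delta/\gamma$, the relations $32\cdot\tfrac{25}{4}=8\cdot 25$ and $32\cdot\tfrac{5}{2}\ge 8\cdot 5$ show the stated $N$ suffices.

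For the write-up:
\begin{itemize}
\item Make that computation explicit.
\item Drop the union bound on the SLQ event in your second paragraph; the final argument never uses it.
\item Note that ``exactly $\varepsilon$ with the $2\varepsilon/5$ split'' holds only if that $2\varepsilon/5$ bounds the Hutchinson error alone.
\item Your net-center caveat is unnecessary: a maximal $\eta$-separated subset of $\Theta$ is an $\eta$-net inside $\Theta$ with at most $(1+2r/\eta)^p\le(3r/\eta)^p$ points for $\eta\le r$.
\end{itemize}
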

Before we present the proof, we discuss the implication of this theorem.
Let $ \widehat{\bftheta} \in \argmin_{\bftheta \in \Theta} \widehat{\mc{F}}_{\rm SL}(\bftheta)$ and let $\bftheta^* \in \argmin_{\bftheta \in \Theta} {\mc{F}}(\bftheta)$. By the arguments in Section~\ref{ssec:excess} and under the assumptions of Theorem~\ref{thm:uniform-slq}, we have with probability at least $1-\delta$, $$0 \le \mc{F}(\widehat{\bftheta}) - \mc{F}(\bftheta^*) \le 2 \max_{\bftheta \in \Theta} \frac12 |\widehat{F}_{\mathrm{SL}}(\bftheta) - \log\det(\bm{\Psi}(\bftheta)) | \le \varepsilon.$$ 
Thus, Theorem~\ref{thm:uniform-slq} provides the minimal number of SLQ iterations and the minimal number of Monte Carlo samples required for the absolute error in the excess risk to be bounded by $\varepsilon$ with high probability. 

\begin{proof} The proof is similar to~\cite[Theorem 4.3]{saibaba2025stochastic}, but additionally accounts for the error in the Lanczos approximation. 
Let $\mc{T} = \{\bftheta_1,\dots,\bftheta_S\}$ be an $\eta$-net of $\Theta$ in the Euclidean norm~\cite[Definition 4.2.1]{vershynin2018high} with
\(\eta = \alpha \varepsilon / (5 m L_\Psi)\) and $S \le \gamma$. 
Since  $\Theta$ is compact, there exists $\bftheta_h\in \Theta$ such that
\begin{align*}
    \max_{\bftheta\in \Theta}\left|\widehat{F}_{\mathrm{SL}}(\bftheta) - \log\det(\bm{\Psi}(\bftheta))\right|= \left|\widehat{F}_{\mathrm{SL}}(\bftheta_h) - \log\det(\bm{\Psi}(\bftheta_h))\right|.
\end{align*}
Let $\bftheta_0 \in \mc{T}$ be a net point such that $\|\bftheta_h - \bftheta_0\|_2 \le \eta$.
Applying the triangle inequality partitions the total error into three components:
\begin{align}\label{eqn:saainter1}
\left|\widehat{F}_{\mathrm{SL}}(\bftheta_h)- \log\det(\bfPsi(\bftheta_h))\right|
\leq & \underbrace{\left|\widehat{F}_{\mathrm{SL}}(\bftheta_h) - \widehat{F}_{\mathrm{SL}}(\bftheta_0)\right|}_{\equiv \gamma_1}+
\underbrace{\left|\widehat{F}_{\mathrm{SL}}(\bftheta_0) - \log\det(\bm{\Psi}(\bftheta_0))\right|}_{\equiv \gamma_2}\\ \nonumber
&+ \underbrace{\left|{\log\det}({\bfPsi}(\bftheta_h)) - \log\det({\bfPsi}(\bftheta_0))\right|}_{\equiv \gamma_3}.
\end{align}
We bound the three components $\gamma_3,\gamma_2$, and $\gamma_1$ in that order.
\paragraph{Control of the log-determinant variation}
By definition of the net, for any $\bftheta \in \Theta$ there exists $\bftheta_j \in \mc{T}$ such that $\|\bftheta - \bftheta_j\|_2 \le \eta$. 
Using  that $\trace(\bfA) \le m\|\bfA\|_2$ for $m\times m$ symmetric matrices, we bound the variation:
\begin{equation}
\label{eq:tracebound}
\gamma_3 = \|\trace(\log (\bfPsi(\bftheta_h)) - \log (\bfPsi(\bftheta_0)))\|_2
\leq m \| \log (\bfPsi(\bftheta_h)) - \log (\bfPsi(\bftheta_0))\|_2 \le \varepsilon/{5}.
\end{equation}
In the last step, we used the Lipschitz continuity of $\bm{\Psi}$ (Proposition~\ref{prop:log-lipschitz}) together with the spectral bound in \Cref{ass:psi}, to obtain
\begin{align}
\label{eq:lips}
\|\log (\bfPsi(\bftheta_h)) - \log (\bfPsi(\bftheta_0))\|_2 \le \frac{1}{\alpha}\| \bfPsi(\bftheta_h) -  \bfPsi(\bftheta_0)\|_2
\le \frac{ L_\Psi}{\alpha}\|\bftheta-\bftheta_j\|_2 \le \varepsilon/{5m}.
\end{align}
\paragraph{Estimator accuracy on the net}
Applying \Cref{lem:slq-vs-logdet} at each $\bftheta_j\in \mc{T},$ 
accuracy $\varepsilon/5$, and failure probability $\delta/S$, yields

\[
\mathbb{P}\bigl(|\widehat{F}_{\mathrm{SL}}(\bftheta_j)- \log\det({\bfPsi}(\bftheta_j))| > \varepsilon/5\bigr) \le \delta/{S},
\]
%
given the choices of $K$ and $N$ specified in the theorem statement. A union bound over all $S$ points in $\mathcal{T}$ then gives

\begin{align}
\label{eq:UnionBound}
\mathbb{P}\!\left(
\max_{1 \le j \le S}
\bigl|\widehat{F}_{\mathrm{SL}}(\bftheta_j)
- \log\det(\bm{\Psi}(\bftheta_j))\bigr|
\le \varepsilon/5
\right)
\ge 1-\delta.
\end{align}
Thus, with probability at least $1-\delta$, the second term  $\gamma_2$ is bounded by $\varepsilon/5$.
\paragraph{Estimator variation}
Finally, we expand the difference between the estimator evaluated at $\bftheta_h$ and $\bftheta_0$:
\begin{align*}
  &|F_{\mathrm{SL}}(\bftheta_h) - F_{\mathrm{SL}}(\bftheta_0)|\le E_1 +E_2+E_3,
 \end{align*}
 where 
 \begin{align*}
 E_1=& \frac{1}{N} \left| \sum_{i=1}^N \bfw_i^\top
 \left( 
 q_k(\bm{\Psi}(\bftheta_h)\! -\!\log\bm{\Psi}(\bftheta_h) 
 \right)\bfw_i\right|\\
 E_2=& 
 \frac{1}{N}\left| \sum_{i=1}^N
 \bfw_i^\top
 \left( 
\log\bm{\Psi}(\bftheta_0) \!-\! q_k(\bm{\Psi}(\bftheta_0) 
 \right)\bfw_i\right|\\
 E_3=& 
 \frac{1}{N}\left| \sum_{i=1}^N
 \bfw_i^\top
 \left( 
\log\bm{\Psi}(\bftheta_h)\!-\!
 \log\bm{\Psi}(\bftheta_0)
  \right)\bfw_i\right|.
 \end{align*}
The terms $E_1$ and $E_2$ correspond to the SLQ approximation error at 
$\bftheta_h$ and $\bftheta_0$, respectively, and are each bounded by 
$\varepsilon/5$ by \Cref{lem:slq-vs-logdet}, since $\kappa(\bftheta_h)\leq \kappa_\infty$. The remaining term $E_3$ can be controlled similarly to \eqref{eq:tracebound} since all Radamacher vectors have norm $\sqrt{m}$, yielding $E_3 \le \varepsilon/5$. Consequently, $
\gamma_1 = \bigl|\widehat{F}_{\mathrm{SL}}(\bftheta_h) - \widehat{F}_{\mathrm{SL}}(\bftheta_0)\bigr|
\le {3\varepsilon}/{5}$.
 
Plugging in the bounds for $\gamma_1,\gamma_2$, and $\gamma_3$ into~\eqref{eqn:saainter1} completes the proof.
\end{proof}




\subsection{Convergence Analysis of the \texorpdfstring{\mmmc}{M3C} Approach}
\label{sec:convergenceMMMC}
In this section, we analyze the convergence of the \mmmc scheme. 
We do this in three steps.  First, in Section \ref{subsub:general} we conduct a probabilistic analysis of MM approaches with inexactness in the optimal solution of each majorant.  This is derived for a general case, and hence may be of wider interest.  Then we focus on the \mmmc method, where we verify assumptions and continuity  properties in Section \ref{subsub:verify}.  The main result is provided in Section \ref{subsub:mmmc} where we show that as the number of iterations $t$ grows, the sequence of minimizers generated by the \mmmc iterations, with derived bounds on the minimal number of samples for the trace estimator, converges with high probability (at least $1-\delta$) to a first-order stationary point of the cost functional $\mc{F}(\bftheta)$.  

\subsubsection{General Case}
\label{subsub:general}
We first derive convergence analysis of MM algorithms for the objective function $\mathscr{F}$ with the assumption that the surrogate function $\mathscr{G}$ is approximated by a surrogate $\widehat{\mathscr{G}}_t$ at iteration $t$.  
Throughout the following analysis, the notation $\nabla_1 \mathscr{G}$ denotes the partial gradient of the surrogate function with respect to its first argument, treating the anchor point as a fixed parameter.

We clarify the assumptions needed on $\mathscr{{F}}$ and $\mathscr{G}$. 
\begin{assumption}\label{ass:funcgrad}
 
Assume the objective function  $\mathscr{F}: \mc{U}\subset \mathbb{R}^d \to \mathbb{R}$ is continuously differentiable, and  the surrogate function $\mathscr{G}: \Theta \times \Theta \to \mathbb{R}$ satisfies the following for all $\bftheta \in \Theta$:

\begin{enumerate}
\item \textbf{Majorization:} $\mathscr{G}(\bftheta \mid \bftheta') \ge \mathscr{F}(\bftheta)$ for fixed $ \bftheta' \in \Theta$
\item \textbf{Strong Tangency:} $\mathscr{G}(\bftheta \mid \bftheta) = \mathscr{F}(\bftheta)$ and $\nabla_1 \mathscr{G}(\bftheta \mid \bftheta) = \nabla \mathscr{F}(\bftheta)$.
\item \textbf{Regularity:} The gradient $\nabla_1 \mathscr{G}(\cdot \mid \bftheta')$ is $L$-Lipschitz continuous on $\Theta$ for any fixed $\bftheta' \in \Theta$.
\end{enumerate}
   
\end{assumption}

We now present the convergence analysis for the \mmmc method. Our proof builds on elements in~\cite[Proposition 2.8]{MMAlgorithmsLange}, but is different in two important ways:  (1) it incorporates inexactness in the surrogate and (2) it applies to constrained optimization problems.

\begin{theorem}
\label{thm:convMMSAA}
Let $\Theta$ satisfy Assumption~\ref{ass:theta} and additionally be convex. Let  $\mc{U}$ be an open neighborhood of $\Theta$ and instate Assumption~\ref{ass:funcgrad}. 

Let  $\{\widehat{\bftheta}_t\}_{t \ge 0}$ be the sequence of the \mmmc iterates, generated by: 
\[
\widehat{\bftheta}_{t+1} \in \arg\min_{\bftheta \in \Theta} \widehat{\mathscr{G}}_t(\bftheta \mid \widehat{\bftheta}_t).
\]
where $\widehat{\mathscr{G}}_t$ is some stochastic approximation of the surrogate $\mathscr{G}$. 

Let $\{\varepsilon_t\}_{t \ge 0}$ and $\{\delta_t\}_{t \ge 0}$ be sequences of strictly positive error tolerances and local failure probabilities satisfying $\displaystyle \sum_{t=0}^{\infty} \varepsilon_t < \infty$ and $\displaystyle \sum_{t=0}^{\infty} \delta_t \le \delta$ for some target global failure probability $\delta \in (0,1)$. 
If, at each iteration $t$, the  surrogate uniformly approximates the MM surrogate such that
\begin{equation}\label{eq:coverGN}
\mathbb{P} 
\left( \max_{\bftheta \in \Theta} \left| \mathscr{G}(\bftheta \mid \widehat{\bftheta}_t) - \widehat{\mathscr{G}}_t(\bftheta \mid \widehat{\bftheta}_t) \right| \le 
\frac{\varepsilon_t}{2} \left| \widehat{\bftheta}_t \right. \right) \ge 1 - \delta_t,
\end{equation} 
%
Then, with probability at least $1 - \delta$, every accumulation point $\bftheta^*$ of the empirical sequence $\{\widehat{\bftheta}_t\}_{t \ge 0}$ is a first-order stationary point of $\mathscr{F}$ over $\Theta$; that is,
\[
    \langle \nabla \mathscr{F}(\bftheta^*), \bftheta - \bftheta^* \rangle \ge 0 \quad \forall \bftheta \in \Theta.
\]
\end{theorem}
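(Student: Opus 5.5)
We work on the event $\mathcal{E} = \bigcap_{t\ge 0}\mathcal{E}_t$, where $\mathcal{E}_t$ is the event inside \eqref{eq:coverGN} at iteration $t$. Since each conditional probability of failure is at most $\delta_t$, the tower property gives $\mathbb{P}(\mathcal{E}_t^c)\le \delta_t$. A union bound then yields $\mathbb{P}(\mathcal{E})\ge 1-\sum_t\delta_t\ge 1-\delta$. Everything below is deterministic on $\mathcal{E}$.

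\textbf{Step 1: approximate descent.} On $\mathcal{E}$, write $\Delta_t:=\max_{\bftheta\in\Theta}|\mathscr{G}(\bftheta\mid\widehat{\bftheta}_t)-\widehat{\mathscr{G}}_t(\bftheta\mid\widehat{\bftheta}_t)|\le\varepsilon_t/2$. For any $\bftheta\in\Theta$,
\begin{align*}
\mathscr{G}(\widehat{\bftheta}_{t+1}\mid\widehat{\bftheta}_t) \le \widehat{\mathscr{G}}_t(\widehat{\bftheta}_{t+1}\mid\widehat{\bftheta}_t)+\tfrac{\varepsilon_t}{2} \le \widehat{\mathscr{G}}_t(\bftheta\mid\widehat{\bftheta}_t)+\tfrac{\varepsilon_t}{2} \le \mathscr{G}(\bftheta\mid\widehat{\bftheta}_t)+\varepsilon_t .
\end{align*}
Thus $\widehat{\bftheta}_{t+1}$ is an $\varepsilon_t$-minimizer of the exact surrogate. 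Taking $\bftheta=\widehat{\bftheta}_t$ and using majorization and tangency gives
\[
\mathscr{F}(\widehat{\bftheta}_{t+1})\le \mathscr{F}(\widehat{\bftheta}_t)+\varepsilon_t .
\]
$\mathscr{F}$ is continuous on the compact set $\Theta$, hence bounded below, and $\sum\varepsilon_t<\infty$. Therefore $\mathscr{F}(\widehat{\bftheta}_t)+\sum_{s\ge t}\varepsilon_s$ is nonincreasing and bounded below, so $\mathscr{F}(\widehat{\bftheta}_t)\to \mathscr{F}_\infty$ converges.

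\textbf{Step 2: a gradient-mapping bound.} This is the main step. We use the $L$-smoothness of $\mathscr{G}(\cdot\mid\widehat{\bftheta}_t)$ together with convexity of $\Theta$. For $\bftheta\in\Theta$ and $\lambda\in[0,1]$, set $\bftheta_\lambda=\widehat{\bftheta}_t+\lambda(\bftheta-\widehat{\bftheta}_t)\in\Theta$. The descent lemma and strong tangency give
\[
\mathscr{G}(\bftheta_\lambda\mid\widehat{\bftheta}_t)\le \mathscr{F}(\widehat{\bftheta}_t)+\lambda\langle\nabla\mathscr{F}(\widehat{\bftheta}_t),\bftheta-\widehat{\bftheta}_t\rangle+\tfrac{L\lambda^2}{2}\|\bftheta-\widehat{\bftheta}_t\|^2 .
\]
Combine this with $\mathscr{F}(\widehat{\bftheta}_{t+1})\le\mathscr{G}(\widehat{\bftheta}_{t+1}\mid\widehat{\bftheta}_t)\le \mathscr{G}(\bftheta_\lambda\mid\widehat{\bftheta}_t)+\varepsilon_t$ and with $\|\bftheta-\widehat{\bftheta}_t\|\le 2r$ from Assumption~\ref{ass:theta}. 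This yields, for every $\lambda\in(0,1]$,
\[
-\lambda\langle\nabla\mathscr{F}(\widehat{\bftheta}_t),\bftheta-\widehat{\bftheta}_t\rangle\le \mathscr{F}(\widehat{\bftheta}_t)-\mathscr{F}(\widehat{\bftheta}_{t+1})+\varepsilon_t+2L\lambda^2r^2 .
\]

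\textbf{Step 3: passage to the limit.} Let $\bftheta^*$ be an accumulation point, with $\widehat{\bftheta}_{t_k}\to\bftheta^*$. Along $t_k$, Step 1 gives $\mathscr{F}(\widehat{\bftheta}_{t_k})-\mathscr{F}(\widehat{\bftheta}_{t_k+1})\to\mathscr{F}_\infty-\mathscr{F}_\infty=0$, and also $\varepsilon_{t_k}\to0$. Dividing by $\lambda$ and using continuity of $\nabla\mathscr{F}$ on $\mathcal{U}\supset\Theta$, we get
\[
-\langle\nabla\mathscr{F}(\bftheta^*),\bftheta-\bftheta^*\rangle\le 2L\lambda r^2
\]
for every $\lambda\in(0,1]$. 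Letting $\lambda\downarrow0$ gives the claimed variational inequality for every $\bftheta\in\Theta$.

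The delicate points are twofold. First, Step 1 must be applied with the conditional guarantee and the union bound correctly, since $\widehat{\bftheta}_t$ is random. Second, Step 2 needs the inexactness to enter only additively. This is why we compare against the feasible path $\bftheta_\lambda$, rather than against first-order optimality of $\widehat{\bftheta}_{t+1}$, which would require exact minimization.
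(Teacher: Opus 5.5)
Your proof is correct. It shares the paper's skeleton: the union bound over the conditional events, $\varepsilon_t$-optimality of $\widehat{\bftheta}_{t+1}$ for the exact surrogate, and the descent lemma combined with strong tangency. Where you differ is in how stationarity is extracted.

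The paper tests its descent inequality at a single comparison point, the projected gradient step $\check{\bftheta}_{t+1}=\mathcal{P}_\Theta(\widehat{\bftheta}_t-L^{-1}\nabla\mathscr{F}(\widehat{\bftheta}_t))$. It uses the variational characterization of the projection to get the sufficient decrease $\mathscr{F}(\widehat{\bftheta}_{t+1})-\mathscr{F}(\widehat{\bftheta}_t)\le-\frac{L}{2}\|\check{\bftheta}_{t+1}-\widehat{\bftheta}_t\|_2^2+\varepsilon_t$. Telescoping gives $\sum_t\|\check{\bftheta}_{t+1}-\widehat{\bftheta}_t\|_2^2<\infty$, and the paper then passes to the limit in the projection inequality along the subsequence.

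You instead test along the whole feasible segment $\bftheta_\lambda$. You obtain convergence of $\mathscr{F}(\widehat{\bftheta}_t)$ from the quasi-monotone bound $\mathscr{F}(\widehat{\bftheta}_{t+1})\le\mathscr{F}(\widehat{\bftheta}_t)+\varepsilon_t$. You then recover the variational inequality by letting $k\to\infty$ and afterwards $\lambda\downarrow 0$.

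Your route needs no projection operator and uses convexity only to keep $\bftheta_\lambda$ feasible. The paper's route gives, as a by-product, a summable and hence rate-bearing bound on the gradient-mapping norm. That advantage is not intrinsic to the paper's method, however. Your Step 1 gives $c_t:=\mathscr{F}(\widehat{\bftheta}_t)-\mathscr{F}(\widehat{\bftheta}_{t+1})+\varepsilon_t\ge 0$ with $\sum_t c_t<\infty$. Let $g_t:=\max_{\bftheta\in\Theta}\langle\nabla\mathscr{F}(\widehat{\bftheta}_t),\widehat{\bftheta}_t-\bftheta\rangle$ denote the Frank--Wolfe gap. Optimizing your Step 2 inequality $g_t\le c_t/\lambda+2L\lambda r^2$ over $\lambda$ yields $g_t^2\le 8Lr^2c_t$ once $c_t\le 2Lr^2$. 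Hence $\sum_t g_t^2<\infty$, and $g_t\to0$ along the entire sequence, not just a subsequence. So the two proofs simply control different stationarity measures.

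Finally, your explicit use of the tower property to pass from the conditional guarantee to $\mathbb{P}(\mathcal{E}_t^c)\le\delta_t$ is slightly more careful than the paper's treatment.
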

\begin{proof}
Let $\{\bftheta_t\}_{t \ge 0}$  be the sequences of the  MM  iterates,  generated by: 
\[
\bftheta_{t+1} \in \arg\min_{\bftheta \in \Theta} \mathscr{G}(\bftheta \mid \widehat{\bftheta}_t).
\]

By the excess risk bound \eqref{eq:risk} and the uniform bound  assumption \eqref{eq:coverGN}, for each $t \ge 0$ the local event
\begin{equation}
    \mc{E}_t = \left\{ \mathscr{G}(\widehat{\bftheta}_{t+1} \mid \widehat{\bftheta}_t) - \mathscr{G}(\bftheta_{t+1} \mid \widehat{\bftheta}_t) \le \varepsilon_t \left|  \widehat{\bftheta}_t \right.\right\},
\end{equation}
holds with probability at least $1 - \delta_t$.

We define the global success event as $\mc{E}_{\mathrm{total}} = \bigcap_{t=0}^{\infty} \mc{E}_t$. By the union bound, the probability of failure is bounded by:$$\mathbb{P}(\mc{E}_{\mathrm{total}}^c) \le \sum_{t=0}^{\infty} \mathbb{P}(\mc{E}_t^c) \le \sum_{t=0}^{\infty} \delta_t \le \delta.$$
Thus, $\mc{E}_{\mathrm{total}}$ occurs with probability at least $1 - \delta$. We condition the remainder of the proof on this event.

The majorization property \cite[Prop.~2.8]{MMAlgorithmsLange} implies that the progress in the target objective satisfies
\begin{align}
    \mathscr{F}(\widehat{\bftheta}_{t+1}) - \mathscr{F}(\widehat{\bftheta}_t) 
    &\le \mathscr{G}(\widehat{\bftheta}_{t+1} \mid \widehat{\bftheta}_t) - \mathscr{G}(\widehat{\bftheta}_t \mid \widehat{\bftheta}_t) \notag \\
    &\le \mathscr{G}(\bftheta_{t+1} \mid \widehat{\bftheta}_t) - \mathscr{G}(\widehat{\bftheta}_t \mid \widehat{\bftheta}_t) + \varepsilon_t.
\end{align}

Using a quadratic upper bound (see, e.g.,~\cite[Section 2]{MMAlgorithmsLange}) for $\mathscr{G}(\cdot\mid \widehat{\bftheta}_t)$, which is L-smooth with Lipschitz constant $L$, for any $\bftheta\in \Theta$,
\begin{equation}
    \mathscr{F}(\widehat{\bftheta}_{t+1}) - \mathscr{F}(\widehat{\bftheta}_t) 
    \le \langle \nabla \mathscr{G}(\widehat{\bftheta}_t \mid \widehat{\bftheta}_t), \bftheta - \widehat{\bftheta}_t \rangle + \frac{L}{2} \|\bftheta - \widehat{\bftheta}_t\|_2^2 + \varepsilon_t.
\end{equation}

From the strong tangency condition, this simplifies to:
\begin{equation}
\label{eq:smoothnessInequality}
    \mathscr{F}(\widehat{\bftheta}_{t+1}) - \mathscr{F}(\widehat{\bftheta}_t) 
    \le \langle \nabla \mathscr{F}(\widehat{\bftheta}_t), \bftheta - \widehat{\bftheta}_t \rangle + \frac{L}{2} \|\bftheta - \widehat{\bftheta}_t\|_2^2 + \varepsilon_t.
\end{equation}
Without any loss of generality, assume $\nabla \mathscr{F}(\widehat{\bftheta}_t) \neq \bfzero$ for any $t\ge 0$; otherwise, stationarity is trivially satisfied. Define the projected gradient step

\[
\bfz_t = \widehat{\bftheta}_t - L^{-1} \nabla \mathscr{F}(\widehat{\bftheta}_t).
\]
As $\Theta$ is closed and convex, the projected iterate is uniquely defined by $\widecheck{\bftheta}_{t+1} = \mc{P}_{\Theta}(\bfz_t)$, where $\mc{P}_{\Theta}$ denotes the Euclidean projection onto $\Theta$.
The optimality condition for the projection, see \cite[Sec. 8.1.3]{BoydConvex}, implies that for all $\bftheta \in \Theta$:
\begin{align}
\label{def:convexopt}
   \langle \bfz_t-\widecheck{\bftheta}_{t+1},\bftheta-\widecheck{\bftheta}_{t+1} \rangle=  \langle ( \widehat{\bftheta}_t - L^{-1} \nabla \mathscr{F}(\widehat{\bftheta}_t))-\widecheck{\bftheta}_{t+1},
   \bftheta-\widecheck{\bftheta}_{t+1}\rangle\leq 0,
\end{align}
Substituting $\bftheta=\widehat{\bftheta}_{t}$, we obtain 
\begin{align}
     \left \langle  \nabla \mathscr{F}(\widehat{\bftheta}_t),\widecheck{\bftheta}_{t+1} -\widehat{\bftheta}_{t} \right\rangle
     \leq -L \left\|\widehat{\bftheta}_{t}-\widecheck{\bftheta}_{t+1} \right\|^2.  
\end{align}
Combining  this bound with the inequality \eqref{eq:smoothnessInequality} 
for $\bftheta=\widecheck{\bftheta}_{t+1}$ 
 then yields
\begin{equation}
    \mathscr{F}(\widehat{\bftheta}_{t+1}) - \mathscr{F}(\widehat{\bftheta}_t) \le
    - \frac{L}{2}  \left\|\widehat{\bftheta}_{t}-\widecheck{\bftheta}_{t+1} \right\|^2 + \varepsilon_t.
\end{equation}
Summing this inequality from $t=0$ to $T$ gives:
\begin{equation}
\frac{L}{2}\sum_{t=0}^{T}
\|\widecheck{\bftheta}_{t+1} - \widehat{\bftheta}_t\|_2^2
\le
\mathscr{F}(\widehat{\bftheta}_0) - \mathscr{F}(\widehat{\bftheta}_{T+1})
+ \sum_{t=0}^{T} \varepsilon_t.
\end{equation}
Since $\mathscr{F}$ is continuous and $\Theta$ is compact, $\mathscr{F}$ is bounded below, and  $\displaystyle \sum_{t\ge 0}\varepsilon_t<\infty,$ letting $T \to \infty$ implies that the sum on the left converges, forcing:
$\displaystyle \lim_{t \to \infty} \|\widecheck{\bftheta}_{t+1} - \widehat{\bftheta}_t\|_2 = 0.$
By the compactness of $\Theta$, the sequence $\{\widehat{\bftheta}_t\}_{t\ge 0}$ admits at least one accumulation point. Let $\bftheta^*$ be any such point, and let $\{\widehat{\bftheta}_{t_k}\}_{k \ge 0}$ be a subsequence converging to $\bftheta^*$.
By the continuity of 
  $\nabla \mathscr{F}$,  taking the limit as $k \to \infty$ in \eqref{def:convexopt}  yields 
\[
\langle
\nabla \mathscr{F}(\bftheta^\star),
\bftheta - \bftheta^\star
\rangle \ge 0,
\quad \forall \bftheta\in\Theta.
\]
\end{proof}
 In \Cref{thm:convMMSAA}, we can choose the 
sequences of error tolerances $\{\varepsilon_t\}_{t\ge 0}$ and local failure probabilities $\{\delta_t\}_{t\ge 0}$ can be chosen to decay geometrically as
\[
    \varepsilon_t = \varepsilon_t \rho^t \quad \text{and} \quad \delta_t = \delta_0 \rho^t, \quad t \ge 0
\]
where $\varepsilon_0 > 0$, $\rho \in (0, 1)$, and $\delta_0 \in (0, \delta(1-\rho)]$.
It is readily verified that the summability conditions 
$\displaystyle \sum_{t = 0}^\infty \varepsilon_t < \infty$ and $\displaystyle \sum_{t = 0}^\infty \delta_t \le \delta$ are naturally satisfied.

\subsubsection{Regularity of MM surrogate}
\label{subsub:verify}
 
%

We study the surrogate function $\mc{G}(\bftheta \mid \bftheta_t)$ for $\mc{F}(\bftheta)$ used in the MM updates, see \eqref{eq:Gmm}. By construction, this function is a majorant that satisfies the strong tangency condition. To apply Theorem~\ref{thm:convMMSAA}, we need to establish the regularity properties of this surrogate function. To this end, we first introduce the following assumptions regarding the objective's components.

\begin{assumption}[$L$-smoothness] \label{assump:regularity}
The following functions possess Lipschitz continuous gradients on the parameter space $\Theta$: the mapping $  \bfA(\bftheta)\bfmu_\bfx$, the matrix mapping $\bfPsi(\bftheta)$, and the log-prior density $\log \pi_{\bftheta}(\bftheta)$.
\end{assumption}
Under these conditions, we can guarantee the smoothness of the surrogate gradient. In particular,  for any fixed $\bftheta_t \in \Theta$, the gradient of the surrogate function with respect to its first argument, $\nabla_1\mc{G}(\bftheta \mid \bftheta_t)$, is Lipschitz continuous on $\Theta$.

\begin{lemma}[$L$-smoothness of the MM surrogate]
\label{cor:LsmoothnessG}
Let $\mc{F}(\bftheta)$ be the functional defined in \eqref{eq:F}, and let 
$\mc{G}(\bftheta \mid \bftheta_t)$ be the MM surrogate introduced in \eqref{eq:Gmm}. 
Then, under Assumption~\ref{assump:regularity} the gradient of the surrogate function $\nabla_1 \mc{G}(\cdot \mid \bftheta_t)$ is $L$-Lipschitz continuous on $\Theta$. 

\end{lemma}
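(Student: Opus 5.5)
The plan is to split $\mc{G}(\bftheta\mid\bftheta_t)$ from \eqref{eq:Gmm} into its three $\bftheta$-dependent pieces and show that each has a Lipschitz gradient on $\Theta$. The pieces are the log-prior term $-\log\pi_{\bftheta}(\bftheta)$, the linear trace term $\tfrac12\trace(\bfPsi_t^{-1}\bfPsi(\bftheta))$, and the data-misfit term $\tfrac12\|\bfA(\bfy)\bfmu_\bfx-\bfb\|^2_{\bfPsi(\bftheta)^{-1}}$. The constant $L$ is then the sum of the three constants. We will use compactness of $\Theta$ (\Cref{ass:theta}) and the uniform spectral bounds $\alpha\bfI\preceq\bfPsi(\bftheta)\preceq\beta\bfI$ of \Cref{ass:psi} throughout. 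We will also need the continuous functions $\bftheta\mapsto\bfA(\bftheta)\bfmu_\bfx$, $\bfPsi(\bftheta)$ and their derivatives to be uniformly bounded on $\Theta$; write $B_r, B_\Psi, B_{\partial r}, B_{\partial\Psi}$ for these bounds.

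\textbf{Step 1 (prior and trace terms).} The log-prior gradient is Lipschitz directly by \Cref{assump:regularity}. For the trace term, the $j$-th partial derivative is $\tfrac12\trace(\bfPsi_t^{-1}\partial_j\bfPsi(\bftheta))$. Its variation between $\bftheta$ and $\bftheta'$ is at most $\tfrac{m}{2\alpha}\|\partial_j\bfPsi(\bftheta)-\partial_j\bfPsi(\bftheta')\|_2$, using $|\trace(\bfC)|\le m\|\bfC\|_2$ and $\|\bfPsi_t^{-1}\|_2\le 1/\alpha$. The Lipschitz continuity of $\nabla\bfPsi$ from \Cref{assump:regularity} then finishes this piece, and the resulting constant does not depend on the anchor $\bftheta_t$.

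\textbf{Step 2 (misfit term).} Set $\bfr(\bftheta)=\bfA(\bftheta)\bfmu_\bfx-\bfb$ and $\bfu(\bftheta)=\bfPsi(\bftheta)^{-1}\bfr(\bftheta)$. The partial derivative is
$$\bfu\t\partial_j\bfr-\tfrac12\bfu\t\partial_j\bfPsi\,\bfu,$$
as in \eqref{eq:fullGrady}. Each factor is bounded and Lipschitz on $\Theta$:
\begin{itemize}
\item $\bfr$ and $\partial_j\bfr$ are bounded and Lipschitz by \Cref{assump:regularity} and compactness.
\item $\partial_j\bfPsi$ is likewise bounded and Lipschitz.
\item $\bftheta\mapsto\bfPsi(\bftheta)^{-1}$ is bounded by $1/\alpha$ and Lipschitz with constant $L_\Psi/\alpha^2$, via the identity $\bfPsi^{-1}-\bfPsi'^{-1}=\bfPsi^{-1}(\bfPsi'-\bfPsi)\bfPsi'^{-1}$.
\end{itemize}
Hence $\bfu$ is bounded by $B_r/\alpha$ and Lipschitz. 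Products of bounded Lipschitz maps are Lipschitz, with constant given by the product rule $\|fg-f'g'\|\le\|f\|\,\|g-g'\|+\|g'\|\,\|f-f'\|$. Applying this to each entry of the gradient and summing over $j$ yields an explicit constant depending on $\alpha$, $L_\Psi$, the bounds above, and the Lipschitz constants from \Cref{assump:regularity}.

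\textbf{Main obstacle.} The delicate point is the boundedness needed in Step 2. \Cref{assump:regularity} only gives Lipschitz gradients, and $\Theta$ is compact, so the gradients are Lipschitz hence continuous, hence bounded on $\Theta$. The same holds for the functions themselves; this is where \Cref{ass:theta} is essential. A Lipschitz gradient on a bounded set also gives a Lipschitz function there, which recovers \Cref{ass:psi}(i) if needed. With these bounds, the remaining algebra is routine, and the constant is uniform in $\bftheta_t$, as required by Assumption~\ref{ass:funcgrad}.
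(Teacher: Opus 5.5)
Your proposal is correct and follows the same route as the paper's proof. You use the same three-term split of $\mc{G}(\cdot\mid\bftheta_t)$, the same bound $m\|\bfPsi_t^{-1}\|_2\,\|\partial_j\bfPsi(\bftheta)-\partial_j\bfPsi(\bftheta')\|_2$ for the trace term, and a product-rule-plus-compactness argument for the misfit term.

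Your Step 2 is more complete than the paper's. The paper only asserts that the factors are bounded, whereas you also show each factor (including $\bfPsi^{-1}$, via the resolvent identity) is Lipschitz, which is what the product rule actually needs. One small caveat: deducing Lipschitz continuity of $\bfA(\bftheta)\bfmu_\bfx$ from its bounded gradient uses convexity of $\Theta$, which holds wherever the lemma is applied.
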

The proof of this result uses standard arguments and is deferred to the Supplementary materials (Section~\ref{supp-sec:proofs}).

\subsubsection{Convergence Analysis for the \texorpdfstring{\mmmc}{M3C} method}
\label{subsub:mmmc}
Consider the problem of minimizing a target function 
\(
\mc{F}:\Theta \to \mathbb{R}
\)
over the convex and  compact set $\Theta \subset \mathbb{R}^p$. In the MM framework, the optimization is performed by iteratively minimizing a surrogate function $\mc{G}(\bftheta \mid \bftheta_t)$ that majorizes the objective.
In the \mmmc approach, the exact surrogate $\mc{G}(\bftheta \mid \bftheta_t)$ is not evaluated  or optimized directly. Instead, we use a Monte Carlo approximation denoted by $\widehat{\mc{G}}_{N_t}(\bftheta \mid \bftheta_t)$, constructed from $N_t$ independent samples at iteration $t \ge 0$.
%
Let
\[
\bftheta_{t+1} \in \arg\min_{\bftheta \in \Theta} \mc{G}(\bftheta \mid \widehat{\bftheta}_t), 
\qquad
\widehat{\bftheta}_{t+1} \in \arg\min_{\bftheta \in \Theta} \widehat{\mc{G}}_{N_t}(\bftheta \mid \widehat{\bftheta}_t).
\]
Although the approximate surrogate $\mc{G}_{N_t}(\bftheta \mid \widehat{\bftheta}_t)$ depends parametrically on the current iterate $\widehat{\bftheta}_t$, the sample size $N_t$ can be chosen uniformly and independently of $\widehat{\bftheta}_t$ so that the resulting estimator satisfies a uniform excess-risk bound. We now present a result for the convergence analysis of the M$^3$C method applied to hyperparameter estimation.

\begin{theorem}\label{thm:coverMMSAA}
Let $\{\varepsilon_t\}_{t\ge 0}$ and $\{\delta_t\}_{t \ge 0}$ be two positive sequences satisfying \, $\sum_{t=0}^\infty \varepsilon_t < \infty$ and $\sum_{t=0}^\infty \delta_t = \delta < 1$  and define the sequence for $t \ge 0$
\[
\gamma_t = \max\left\{\left(\frac{12rmL_\Psi}{\varepsilon_t\alpha}\right)^p,1\right\},
\]
where  $\Theta$ satisfies Assumption~\ref{ass:theta} and is convex. Furthermore, suppose Assumptions~\ref{ass:psi} and~\ref{assump:regularity} hold. If the number of samples at iteration $t$, denoted $N_t$, satisfies
\[
N_t \ge 
\frac{16(2\varsigma_F^2+\varepsilon_t\varsigma_2)}{\varepsilon_t^2}
\ln\!\left(\frac{2\gamma_t}{\delta_t}\right),
\]
where $\displaystyle \varsigma_F:=\frac{1}{\alpha}\max_{\bftheta\in \Theta} \| {\bfPsi(\bftheta)}\|_F$, and
$\displaystyle \varsigma_2:=\frac{1}{\alpha}\max_{\bftheta\in \Theta} \|{\bfPsi(\bftheta)}\|_2$.
     Then, with probability at least $1-\delta$, any accumulation point of the sequence $\{\widehat{\bftheta}_t\}_{t\ge 0}$ is a first order stationary point of $\mc{F}$, with probability at least $1-\delta$. 

\end{theorem}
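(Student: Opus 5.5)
The plan is to reduce the statement to \Cref{thm:convMMSAA} with $\mathscr{F}=\mc{F}$, $\mathscr{G}=\mc{G}$ and $\widehat{\mathscr{G}}_t=\widehat{\mc{G}}_{N_t}$. The deterministic hypotheses come first. $\Theta$ is compact and convex by \Cref{ass:theta}. \Cref{ass:psi} gives $\bfPsi(\bftheta)\succeq\alpha\bfI$ on $\Theta$, so $\mc{F}$ extends to a $C^1$ function on an open neighborhood $\mc{U}$ of $\Theta$. Majorization and strong tangency of $\mc{G}$ follow from the concavity of $\logdet$ via \eqref{eq:logdetmajorization}: the linearization is tangent, and $\nabla_1\mc{Q}(\bftheta_t\mid\bftheta_t)$ equals the gradient of $\logdet\bfPsi$ at $\bftheta_t$. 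The regularity condition is \Cref{cor:LsmoothnessG}. What remains is the probabilistic uniform bound \eqref{eq:coverGN}.

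For that bound, the log-prior and the data-misfit term cancel in $\mc{G}-\widehat{\mc{G}}_{N_t}$, leaving
\[
\mc{G}(\bftheta\mid\widehat{\bftheta}_t)-\widehat{\mc{G}}_{N_t}(\bftheta\mid\widehat{\bftheta}_t)=\tfrac12\bigl(\trace(\bfM(\bftheta))-\tfrac{1}{N_t}\textstyle\sum_i\bfw_i\t\bfM(\bftheta)\bfw_i\bigr),\qquad \bfM(\bftheta):=\bfPsi_t^{-1}\bfPsi(\bftheta),
\]
with $\bfPsi_t=\bfPsi(\widehat{\bftheta}_t)$. It therefore suffices to show that the Hutchinson error for $\bfM(\bftheta)$ is at most $\varepsilon_t$ uniformly over $\Theta$, conditionally on $\widehat{\bftheta}_t$. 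The fresh Rademacher vectors at step $t$ are independent of $\widehat{\bftheta}_t$, so the conditional probability is a fixed-matrix-family statement. The matrix $\bfM$ is nonsymmetric, so I would use $\bfw\t\bfM\bfw=\bfw\t\tfrac12(\bfM+\bfM\t)\bfw$, which has the same trace. The symmetrized part obeys $\|\cdot\|_F\le\|\bfPsi_t^{-1}\|_2\|\bfPsi(\bftheta)\|_F\le\varsigma_F$, and likewise its spectral norm is at most $\varsigma_2$. These constants do not depend on $\widehat{\bftheta}_t$, which is what allows $N_t$ to be chosen uniformly.

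The argument then follows the covering proof of \Cref{thm:uniform-slq}, with no Lanczos term.
\begin{enumerate}
\item Take an $\eta$-net of $\Theta$ with $\eta=\varepsilon_t\alpha/(4mL_\Psi)$, so that its cardinality is at most $(3r/\eta)^p=\gamma_t$.
\item At each net point, apply the Hutchinson concentration bound for Rademacher vectors (the $\bfM=$ general-symmetric version of \Cref{lem:slq-vs-logdet}(ii), i.e.\ \cite[Theorem~5]{CortinovisKressner} with exact quadratic forms) at accuracy $\varepsilon_t/2$ and failure probability $\delta_t/\gamma_t$. Plugging $\varepsilon_t/2$ into the $N\gtrsim 32(\varepsilon^{-2}\|\cdot\|_F^2+\tfrac12\varepsilon^{-1}\|\cdot\|_2)\ln(2\gamma_t/\delta_t)$ form reproduces the stated $16(2\varsigma_F^2+\varepsilon_t\varsigma_2)\varepsilon_t^{-2}\ln(2\gamma_t/\delta_t)$, possibly up to an absolute constant; I would adjust the constant split if needed.
\item Take a union bound over the net.
\item Extend to all of $\Theta$ by Lipschitz continuity. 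Both $\trace\bfM(\bftheta)$ and the estimator are Lipschitz in $\bftheta$ with constant $mL_\Psi/\alpha$, since $|\trace(\bfPsi_t^{-1}\bfE)|\le m\|\bfPsi_t^{-1}\|_2\|\bfE\|_2$ and $\|\bfw_i\|_2^2=m$. Each variation is then at most $\varepsilon_t/4$, giving a total error of at most $\varepsilon_t$ on the trace difference, i.e.\ $\varepsilon_t/2$ after the factor $\tfrac12$, as \eqref{eq:coverGN} requires.
\end{enumerate}

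With \eqref{eq:coverGN} in hand for every $t$, \Cref{thm:convMMSAA} yields the conclusion with probability at least $1-\sum_t\delta_t=1-\delta$.

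The main obstacle is bookkeeping the constants: splitting $\varepsilon_t$ between the net error and the discretization error so that $\gamma_t$ has exactly the constant $12$ and the sample bound matches. A secondary point needing care is justifying the conditioning on $\widehat{\bftheta}_t$, which holds because the bounds on $\varsigma_F,\varsigma_2$ and the Lipschitz constants are uniform over $\Theta$.
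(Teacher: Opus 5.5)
Your proposal is correct and follows the paper's proof. The paper also reduces to \Cref{thm:convMMSAA}, using \Cref{cor:LsmoothnessG} for smoothness, and bounds the trace kernel by $\alpha^{-1}\|\bfPsi(\bftheta)\|_\xi$. It then obtains \eqref{eq:coverGN} by invoking the covering-based uniform concentration result \cite[Theorem~4.3]{saibaba2025stochastic} with net radius $\varepsilon_t\alpha/(4mL_\Psi)$, which is exactly the net, union-bound and Lipschitz-extension argument you spell out.

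On the constants: the stated $N_t$ is the plain Rademacher Hutchinson bound $8(\varepsilon^{-2}\|\overline{\bfB}\|_F^2+\varepsilon^{-1}\|\overline{\bfB}\|_2)\ln(2\gamma_t/\delta_t)$ evaluated at $\varepsilon=\varepsilon_t/2$. Equivalently, it is the $32$-form of \Cref{lem:slq-vs-logdet}(ii) at $\varepsilon_t$, since that form already reserves half the accuracy for the Lanczos error. So your split $\varepsilon_t/2+\varepsilon_t/4+\varepsilon_t/4$ reproduces the $12$ in $\gamma_t$ and the $16$ in $N_t$ exactly, whereas plugging $\varepsilon_t/2$ into the $32$-form, as you wrote in step 2, would overshoot by a factor of up to $4$.
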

\begin{proof}
The plan is to apply Theorem 4.3 in \cite{saibaba2025stochastic} to establish that the events
\begin{equation}\label{eqn:event_et}
    \mc{E}_t = \left\{ 
   \mc{G}( \widehat{\bftheta}_{t+1}\mid \widehat{\bftheta}_t)
-
\mc{G}( {\bftheta}_{t+1}\mid \widehat{\bftheta}_t) \le \varepsilon_t \left| \widehat\bftheta_t \right. \right\},
\end{equation}
hold with probability at least $1-\delta_t$ for $t\ge 0$, and then apply  Theorem~\ref{thm:convMMSAA}. 
 Consider the decomposition
$
\mc{G}(\bftheta\mid\widehat{\bftheta}_t)
=
\trace(\bfK(\bftheta))+R(\bftheta),
$ where
\begin{align*}
\bfK(\bftheta)
&=
\frac{1}{2}\!\left(
\bfPsi^{-1}(\widehat{\bftheta}_t)\bfPsi(\bftheta)+ \bfPsi(\bftheta)\bfPsi^{-1}(\widehat{\bftheta}_t)
\right),\\
R(\bftheta)
&=
-\log\pi_{\bftheta}(\bftheta)
+
\frac{1}{2}\left(
\log\det(\bfPsi(\widehat{\bftheta}_t))
-
\trace(\bfI)
\right)
+
\frac{1}{2}
\big\|
\bfA(\bftheta)\bm{\mu}_{\bfx}-\bfb
\big\|^2_{\bfPsi^{-1}(\bftheta)} .
\end{align*}
Under \Cref{ass:psi}, we have 
\(
\alpha \bfI \preceq \bfPsi(\bftheta),
\)
which implies  $\|\bfPsi^{-1}(\widehat{\bftheta}_t)\|_2 \le 1/\alpha$. 
Using the submultiplicativity of the spectral and Frobenius norms, we bound $\bfK(\bftheta)$ as:
\begin{align*}
\|\bfK(\bftheta)\|_\xi \le \|\bfPsi^{-1}(\widehat{\bftheta}_t)\|_2 \|\bfPsi(\bftheta)\|_\xi \le \frac{1}{\alpha} \|\bfPsi(\bftheta)\|_\xi, \quad \xi \in \{2, F\}.
\end{align*}
The concentration analysis in \cite{saibaba2025stochastic} requires a bound on the off-diagonal part,
$\overline{\bfK}(\bftheta) = \bfK(\bftheta) - \diag(\bfK(\bftheta))$. By~\cite{bhatia1989comparing}, 
\begin{align*}
\|\overline{\bfK}(\bftheta)\|_\xi \le   \|\bfK(\bftheta)\|_\xi \le \frac{1}{\alpha} \|\bfPsi(\bftheta)\|_\xi,  \quad \xi \in \{2, F\},
\end{align*}
which yields the constants $\varsigma_F$ and $\varsigma_2$ used in the sample complexity bound.

By the choice of $N_t$ and the application of the uniform concentration bound for sample average approximations, see \cite[Theorem 4.3]{saibaba2025stochastic} 
and Section~\ref{supp-sec:StochasticSurrogate}, the events $\mathcal{E}_t$ defined in~\eqref{eqn:event_et} hold with probability at least $1-\delta_t$ for $t \ge 0$. We conclude by applying the convergence result in \Cref{thm:convMMSAA}.  \Cref{assump:regularity} and \Cref{cor:LsmoothnessG} guarantee the necessary $L$-smoothness of the surrogate. Since $\Theta$ is compact and convex, and the approximation errors are summable, the conditions of \Cref{thm:convMMSAA} are satisfied. Thus, by Theorem~\ref{thm:convMMSAA}, every accumulation point of the sequence $\{\widehat{\bftheta}_t\}_{t\ge 0}$ is a first-order stationary point of $\mc{F}$ with the prescribed probability.
\end{proof}

Theorem~\ref{thm:coverMMSAA} requires $\varepsilon_t \to 0$ as $t \rightarrow \infty$, forcing the sample size to grow with the iteration index $t$. Although the samples can be reused, each iteration solves a different SAA problem.

\section{Numerical Results} \label{sec:NumEx}
In Section~\ref{sub:seismic}, we consider an example of Case I, where the hyperparameters define the noise variance, the prior variance, and the correlation length of the prior. In Sections~\ref{sec:sr} and~\ref{sec:ns}, we consider two examples of Case II: super-resolution imaging and contaminant source identification, where both problems can be represented as separable nonlinear inverse problems \eqref{eq:case2}, and model hyperparameters can be estimated using the proposed \mmmc approach.

Analogous to the two approaches for gradient approximation for \mmmc (see \Cref{ssec:compconsider}), we consider two approaches for gradient approximation for SAA. For Option (a), the gradients of $\mc{F}(\bftheta)$, found in \eqref{eq:fullGradpsi} and \eqref{eq:fullGrady}, are approximated using a Hutchinson trace estimator of $\trace( \bfPsi(\bftheta)^{-1}\frac{\partial \bfPsi(\bftheta)}{\partial\theta_j})$, combined with finite difference on the terms $\frac{\partial \bfPsi(\bftheta)}{\partial\theta_j}$, $\frac{\partial \bfPsi(\bftheta)}{\partial\theta_j}$, and $\frac{\partial \bfA(\bfy)}{\partial\theta_j}$. Additional details on this approach are provided in the Supplementary materials (see Section~\ref{supp-sec:SAA}). For Option (b), finite difference is used on the entire objective function $\widehat{\mc{F}}_{\rm SL}(\bftheta)$. That is, $[\nabla \widehat{\mc{F}}_{\rm SL}(\bftheta)]_j \approx \frac{\widehat{\mc{F}}_{\rm SL}(\bftheta + \bfh_j) - \widehat{\mc{F}}_{\rm SL}(\bftheta)}{\epsilon}$ where $\bfh_j = \epsilon\bfe_j$ is sufficiently small.

\subsection{Seismic Inversion}
\label{sub:seismic}
We begin with a model problem from seismic inversion, where the goal is to reconstruct an image depicting the slowness of the subsurface using seismic waves. 
We construct an instance of the problem using the \verb|PRseismic| function in IR Tools~\cite{gazzola2019ir} resulting in a linear inverse problem of the form \cref{eq:case1}
where $\bfA\in\bbR^{1,440\times 65,536}$ is the forward model, $\bfx\in\bbR^{65,536}$ is the vectorized $256\times 256$ unknown image provided in \Cref{fig:SIrecons}, $\bfb\in\bbR^{1,440}$ are observations, and $\bfe\in\bbR^{1,440}$ represents the noise.
The number of measurements $m$ is obtained from $m = 32 \times  45 = 1440$, where $32$ is the number of source rays and $45$ is the number of receivers.  The observations are generated using some $\bfx_{\rm true}$ and to simulate measurement noise, we add $2\%$ Gaussian white noise. We define the prior and likelihood distributions using $\bfR(\bftheta)=\theta_1\bfI$, where $\theta_1$ is unknown, $\bfmu_\bfx = \bfzero$, and
$\bfQ_\bfx(\bftheta)$ is defined by a Mat\'ern kernel, 
\begin{equation}
\label{eq:Matern}
{\rm matern}(\bfr,\bfr') = \theta_2^2 \frac{2^{1-\nu}}{\Gamma(\nu)}\left( \sqrt{2\nu}\frac{\|\bfr-\bfr'\|_2}{\theta_3} \right)^\nu K_\nu\left( \sqrt{2\nu}\frac{\|\bfr-\bfr'\|_2}{\theta_3}\right),
\end{equation}
where $K_\nu$ is the modified Bessel function of the second kind, $\theta_2^2$ controls the variance of the process, $\theta_3$ is the length scale, and $\nu=\frac{1}{2}$ is the smoothness. The prior distribution for each component of the unknown hyperparameters\footnote{Note that for Case I, $\bftheta = \bfpsi$.}, $\bftheta=(\theta_1,\theta_2,\theta_3)$,  is the gamma distribution 
$\theta \sim \Gamma(\beta)\propto  e^{-\beta\theta}$, where $\beta=1\times10^{-4}$. Note that although the true value of $\theta_1^2=1.0762\times10^{-4}$ is known, the true values of $\theta_2$ and $\theta_3$ are not.


We set the initial value to be $\bftheta_0 =(3, 0.25, 1)$. The reconstruction at $\bftheta_0$ is provided in \Cref{fig:SIrecons}. For both SAA and \mmmc methods, we use the fmincon interior point method with lower bound $l_b = (1\times10^{-7},\ 1\times10^{-7},\ 1\times10^{-7})$, given that parameters $\theta_2$ and $\theta_3$ must be positive, and upper bound $u_b = (100,\ 100,\ 100)$. For the SAA approach, the maximum number of iterations is 200 and the step tolerance is $1e-4$. For the \mmmc approach, the maximum number of inner iterations is set to 2, 5, 10, and 15, and the step tolerance is set to $1e-4$. For the outer-loop, the maximum number of iterations is 200 and the stopping criteria is $\|\widehat\bftheta_t-\widehat\bftheta_{t+1} \|_2/\| \widehat\bftheta_{t+1}\|_2 < 1e-4$. For each setup, we use $N_t=24$ independent Rademacher vectors for the Monte Carlo trace estimation.
Also, for the \mmmc method, the preconditioner described in \cite{chung2024efficient} is formed at every outer iteration and reused for each inner iteration.

\begin{figure}[ht!]
\centering
\begin{tabular}{c c c c c}
    Truth & $\bftheta_0$ & SAA & \mmmc (2) & \mmmc (5) \\
    \includegraphics[width=.16\textwidth]{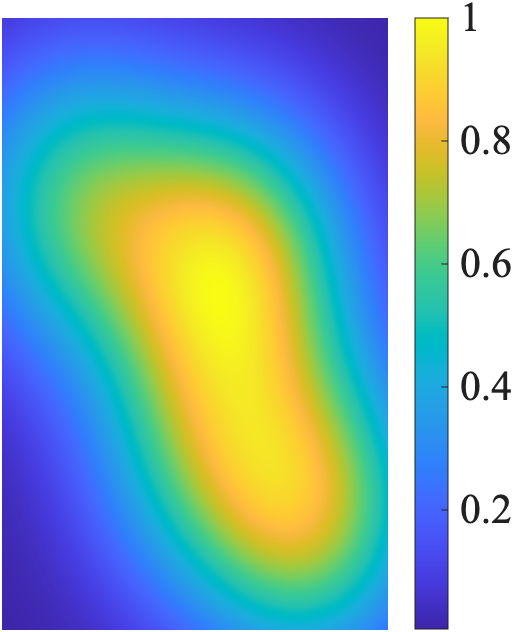} 
    & \includegraphics[width=.16\textwidth]{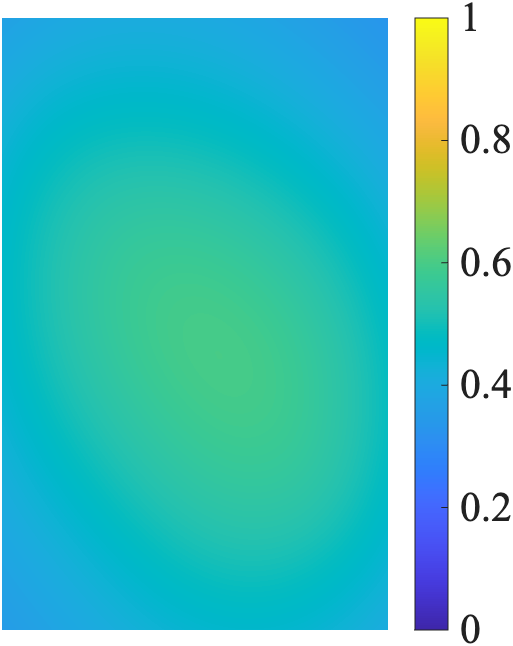}
    & \includegraphics[width=.16\textwidth]{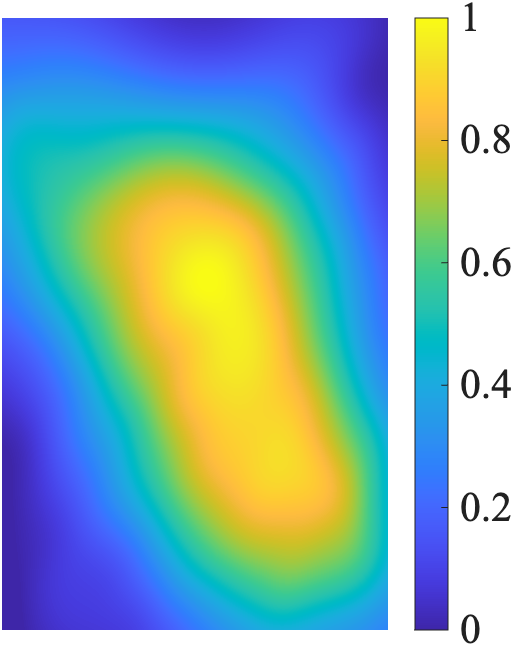} 
    & \includegraphics[width=.16\textwidth]{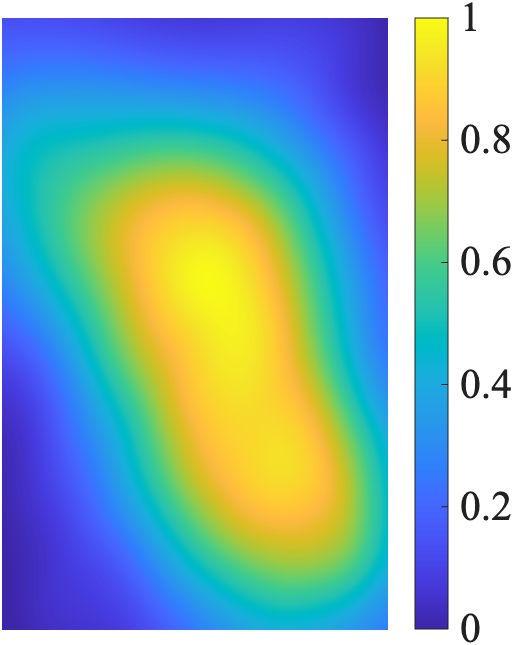}
    & \includegraphics[width=.16\textwidth]
    {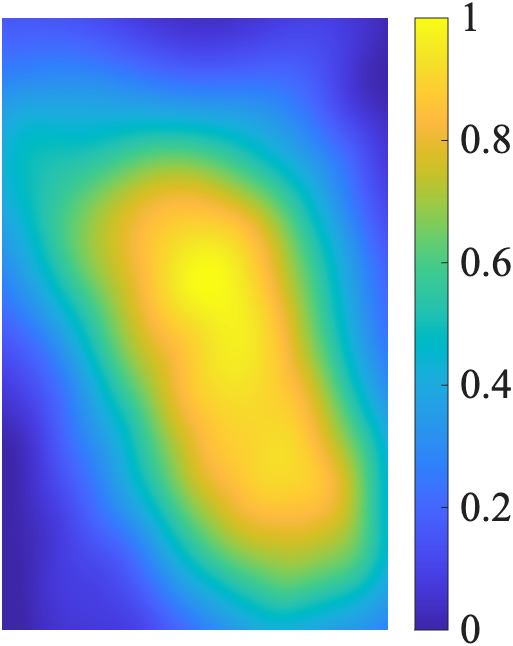}

\end{tabular}
\caption{Seismic inversion example. The ground truth is provided, along with reconstructions obtained using the initial guess $\bftheta_0$, the SAA approach, and the \mmmc approach with the maximum number of inner iterations identified in parentheses. }
\label{fig:SIrecons}
\end{figure}

For this numerical experiment, the SAA method replicates the setup in \cite{chung2024efficient} and serves as a comparison for the proposed \mmmc method. For the first test, we compare the SAA and \mmmc methods using approximations of the analytical gradients given in Option (a) of both methods. In \Cref{tab:SIresults} (a) it can be seen that the \mmmc approach has a faster runtime, uses fewer matvec operations, and achieves a smaller relative reconstruction error when the maximum inner iterations is set to 2.  Relative reconstruction errors are computed as $\|\widehat{\bfx} - \bfx_{\rm true}\|_2/\| \bfx_{\rm true} \|_2$ where $\widehat{\bfx}$ is the solution computed at the SAA estimate, $\widehat{\bftheta}_{\rm SAA}$, or the \mmmc estimate, $\widehat{\bftheta}_{\rm M^3C}$ using PCG. Although both methods produce comparable relative reconstruction errors for a larger number of maximum inner iterations, the SAA method outperforms \mmmc in runtime and number of matvecs. This indicates that the number of inner iterations used for \mmmc has an effect on the performance of the method and so should be chosen carefully. The final reconstructions for the SAA method and the \mmmc method using maximum inner iterations of 2 and 5 provided in \Cref{fig:SIrecons}, along with the reconstruction errors, show that both methods lead to good approximations of the true solution.

\begin{table}

\centering
\renewcommand{\arraystretch}{1.2}
\begin{tabular}{|c|c|c|c|c|c|c|c|c|} \cline{1-9}
  & \multicolumn{1}{c|}{} & max inner & total & total & time & \multicolumn{2}{c|}{matvecs} & rel \\ \cline{7-8}
   & \multicolumn{1}{c|}{} & iter  & iter & fn evals & (s) & $\bfA$ \& $\bfA^\top$ & $\bfQ$ & error  \\ \cline{1-9}
    \multirow{ 5}{*}{(a)}& SAA & - & 21 & 53 & 123 & 43,693 & 21,555 & 0.04848\\
    \cline{2-9} 
    & \multirow{ 4}{*}{\mmmc} &2 & 21 & 129 & 93 & 14,823 & 6,702 & 0.04269\\
    & &5 & 83 & 429 & 273 & 47,841 & 21,561 & 0.04863\\
    & &10 & 143 & 665 & 423 & 75,767 & 34,226 & 0.04864\\
    & &15 & 155 & 652 & 427 & 77,586 & 35,207 & 0.04864 \\  \hline
    \hline
    \multirow{ 5}{*}{(b)}& 
    SAA & - & 14 & 67 & 177 & 78,735 & 39,334 & 0.04409\\
    \cline{2-9} 
    & \multirow{ 4}{*}{\mmmc} & 2 & 18 & 183 & 58 & 18,477 & 9,147 & 0.03629\\
    & & 5 & 44 & 350 & 102 &  34,542 & 17,096 & 0.03710\\
    & & 10 & 133 & 1,142 & 297 & 109,788 & 54,323 & 0.04891\\
    & & 15 & 150 & 1,157 & 305 & 113,547 & 56,195 & 0.04864 \\  
    \hline
\end{tabular}

\captionof{table}{Seismic example. For SAA and \mmmc approaches with different maximum number of inner iterations, we compare the total number of iterations, total function evaluations, runtime, total number of matvecs, and relative image reconstruction errors. 
Results in Option (a) use approximations of the analytic gradients, and results in Option (b) use finite difference approximations.}
\label{tab:SIresults}

\end{table}

Next, we compare the SAA and \mmmc methods using a finite difference approximation of the gradients from Option (b) of both methods. The results provided in \Cref{tab:SIresults} (b) demonstrate for maximum inner iterations of 2 and 5, the \mmmc method has a faster runtime and uses fewer matvec operations. Additionally, these two setups achieve a smaller relative reconstruction error than the SAA method and both SAA and \mmmc using the Monte Carlo approximate gradients. Moreover, comparing the number of total function evaluations to the number of matvecs, it can be seen that the \mmmc objective function is computationally cheaper than the SAA objective function. For a larger number of maximum inner iterations, the SAA method has a faster runtime, fewer matvecs, and a smaller relative reconstruction error.

\subsection{Super-Resolution}\label{sec:sr}
Next, we consider a super-resolution image reconstruction problem where the goal is to construct a high-resolution image from multiple low-resolution images \cite{chung2006numerical,milanfar2017super}. The measurements are a collection of vectorized $32\times 32$ low-resolution images $\bfb^{(1)},\ldots, \bfb^{(8)}\in\bbR^{1,204}$ that contain information about the same object. In practice, observations can be obtained from multiple sensors aimed at the same object or by imaging an object at different time points. Specifically,
$
\bfb^{(i)} = \bfD\bfS(\bftheta^{(i)})\bfx + \bfe_i
$
where $\bfD$ transforms a high-resolution image into a low-resolution image and $\bfS(\bftheta^{(i)})$ is a sparse matrix that performs a geometric distortion on the high-resolution image $\bfx$ defined by unknown parameters\footnote{Note that for Case II, $\bftheta = \bfy$.} $\bftheta^{(i)}\in\bbR^6$ for $1 \le i \le 8$. For this numerical experiment, noisy observations are generated by distorting the true high-resolution image and performing a decimation. The collection of low-resolution measurements and the true high-resolution image can be found in \Cref{fig:SR_ref}.

\begin{figure}[t!]
    \centering
\begin{tabular}{c|cccc}
     True & \multicolumn{4}{c}{Low Resolution Images}  \\
     \includegraphics[width=.15\textwidth]{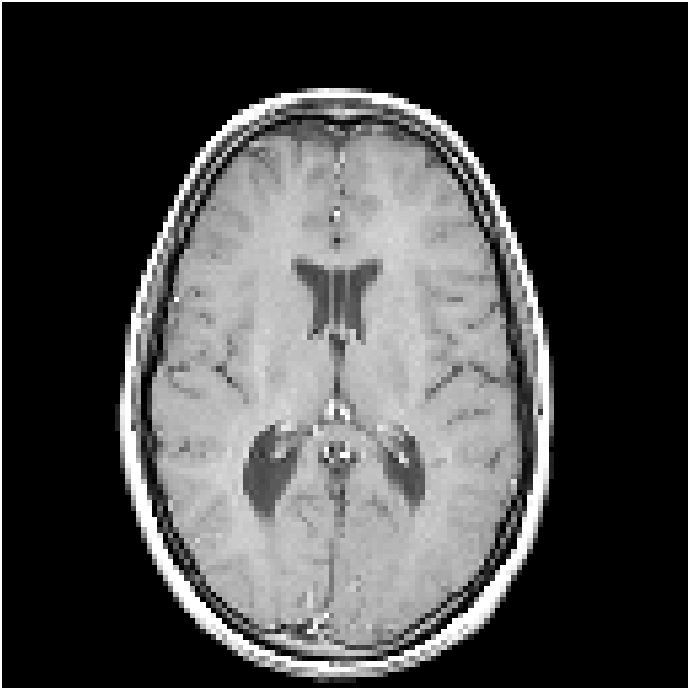}
     & \includegraphics[width=.15\textwidth]{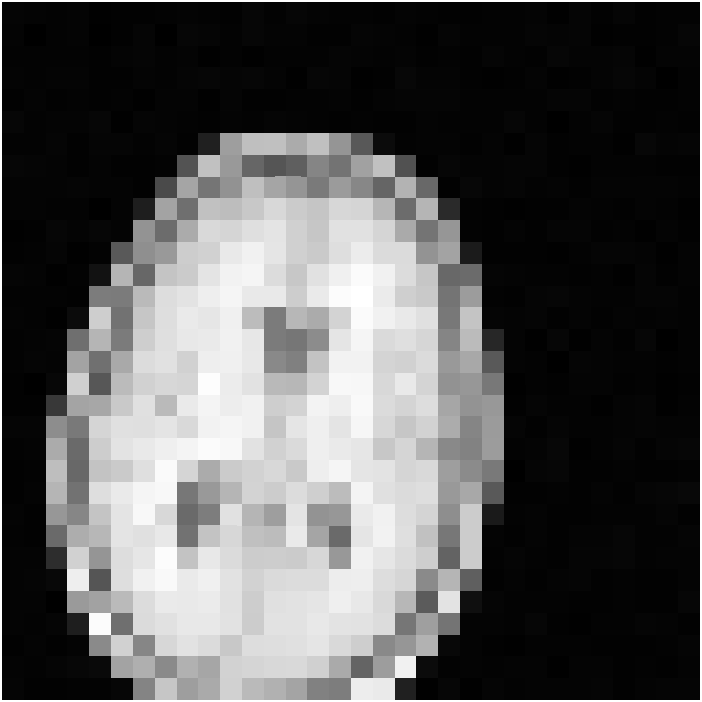}
     & \includegraphics[width=.15\textwidth]{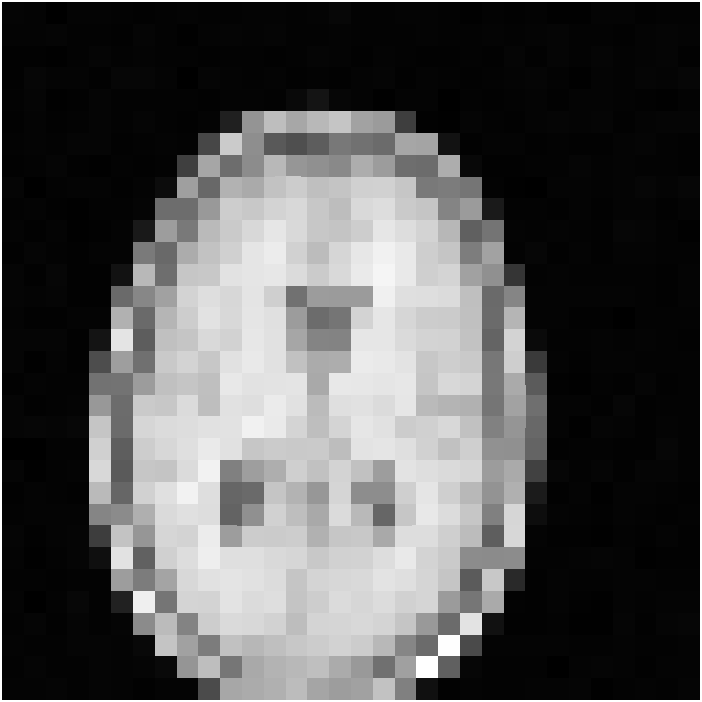}
     & \includegraphics[width=.15\textwidth]{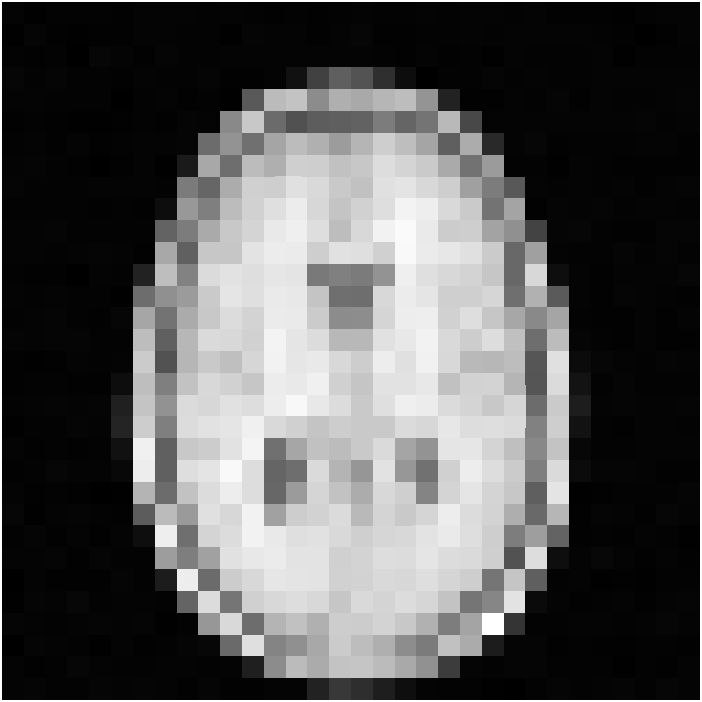}
     & \includegraphics[width=.15\textwidth]{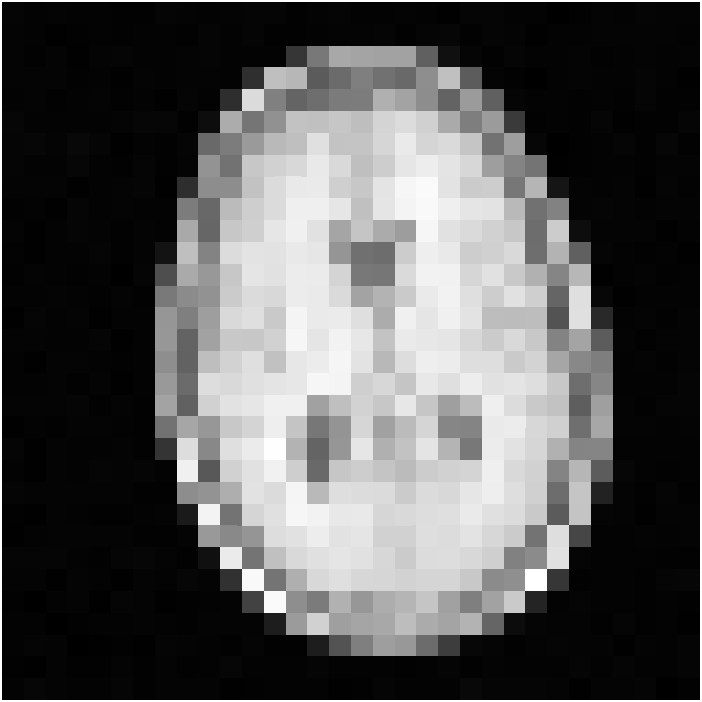} \\ 
     Reference & \multicolumn{4}{c}{} \\
     \includegraphics[width=.15\textwidth]{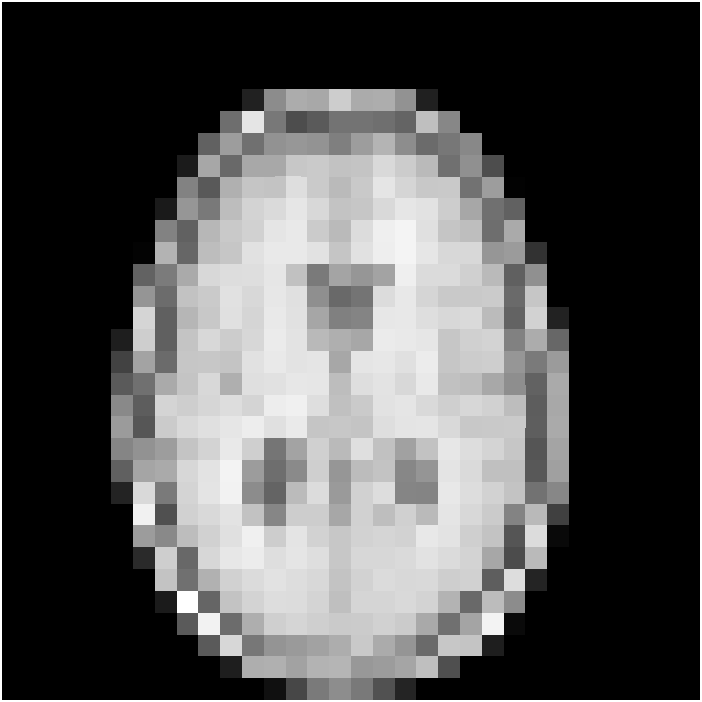}
     & \includegraphics[width=.15\textwidth]{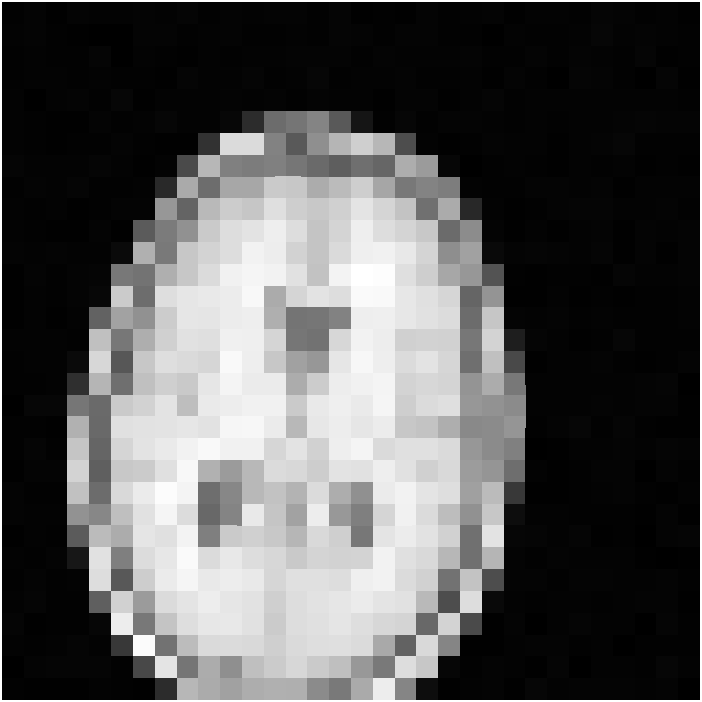}
     & \includegraphics[width=.15\textwidth]{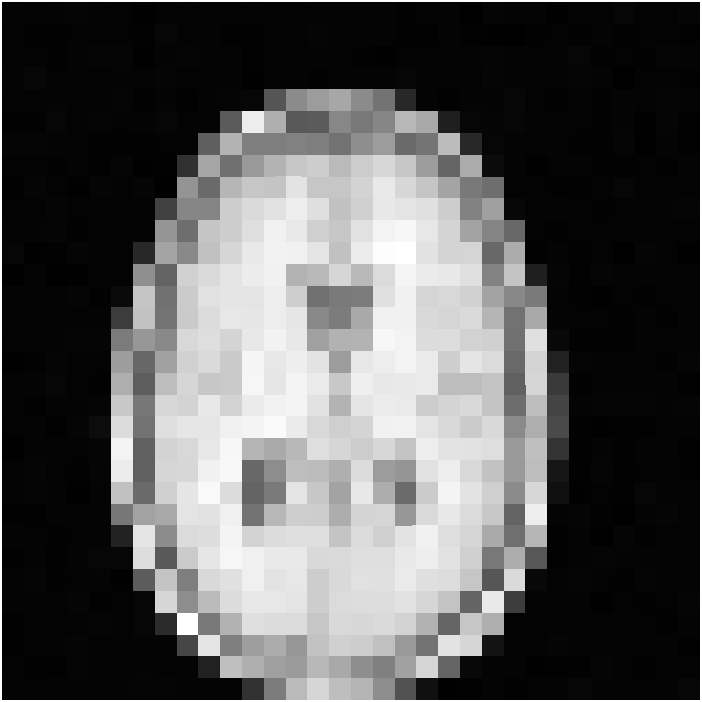}
     & \includegraphics[width=.15\textwidth]{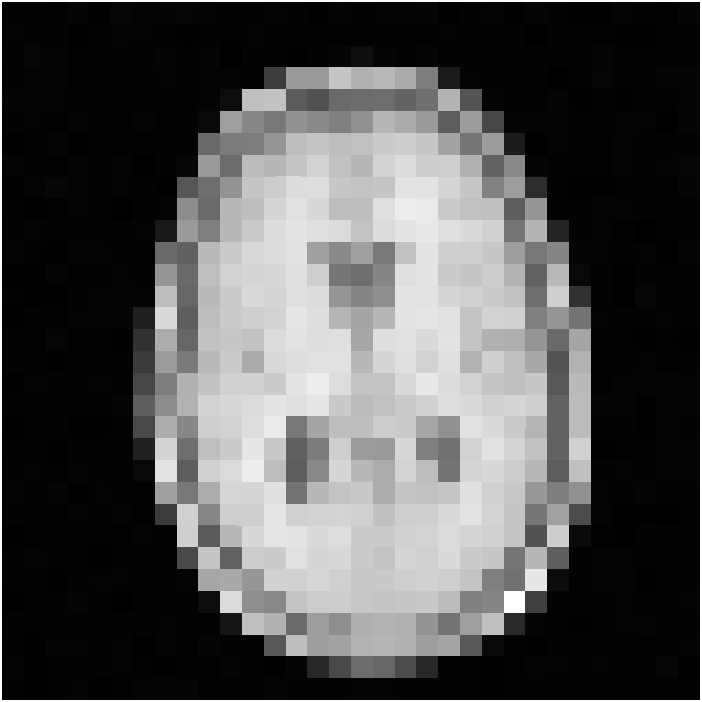}
     & \includegraphics[width=.15\textwidth]{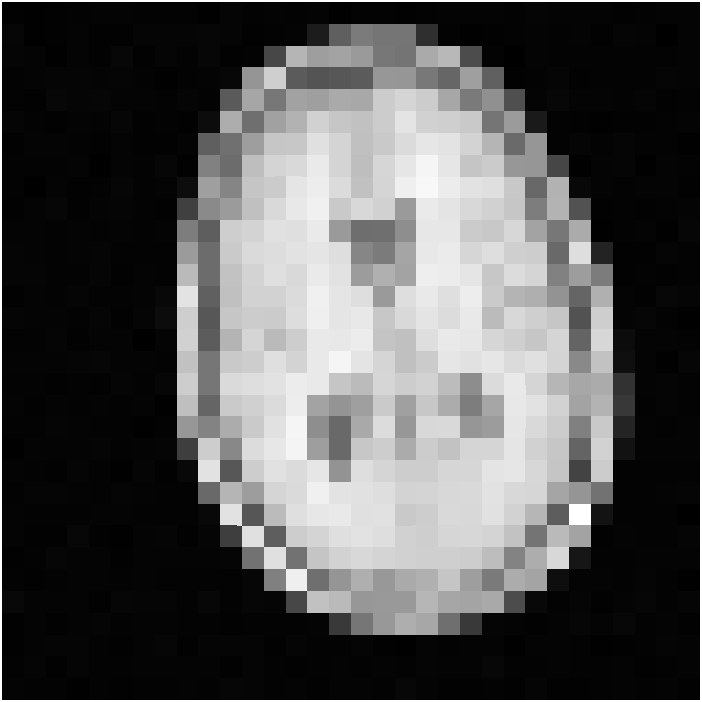} \\ 

\end{tabular}
    \caption{Super-resolution example. The true high resolution image is provided, along with multiple low resolution images, including the reference image.}
    \label{fig:SR_ref}
\end{figure}

The separable nonlinear inverse problem is then given by \cref{eq:case2}
where
\begin{equation*}
    \bfb=\begin{bmatrix}
        \bfb^r\\ \bfb^{(1)}\\ \vdots \\ \bfb^{(8)}
    \end{bmatrix}, \quad
    \bftheta=\begin{bmatrix}
        \bftheta^{(1)}\\ \vdots \\ \bftheta^{(8)}
    \end{bmatrix}, \quad
    \bfA(\bftheta) = \begin{bmatrix}
        \bfD\\ \bfD\bfS(\bftheta^{(1)}) \\ \vdots \\ \bfD\bfS(\bftheta^{(8)})
    \end{bmatrix},
\end{equation*}
$\bfx\in\bbR^{16,384}$ is the unknown $128\times 128$ high-resolution image, $\bfmu_\bfx = \bfzero$, $\bfQ_\bfx=25^2\bfI$, and $\bfR = (0.37)^2\bfI$. The reference image, denoted by $\bfb^r$, does not undergo a translation or rotation meaning $\bfD\bfx = \bfb^r$. Moreover, we use a Gaussian hyperprior $\bftheta\sim\mc{N}(\bf0,\bfI)$.
The initial guess $\bftheta_0$, provided in \Cref{fig:SR_results} alongside the true parameters $\bftheta_{\rm true}$, is calculated in a pre-registration phase and then scaled to be between the lower bound $l_b=-0.2$ and the upper bound $l_u = 0.2$. For both methods, we use the fmincon interior point method with box constraints defined by $l_b$ and $u_b$ on all components of $\bftheta$. For the SAA approach, the maximum number of iterations is 200 and the step tolerance is $1e-6$. For the \mmmc approach, the maximum number of inner iterations is 150 and the step tolerance is $1e-6$. For the outer-loop, the maximum number of iterations is 200 and the stopping criteria is $\|\widehat\bftheta_t-\widehat\bftheta_{t+1} \|_2/\| \widehat\bftheta_{t+1}\|_2 < 1e-6$. For each method, we use $N_t=100$ independent Rademacher vectors for the Monte Carlo trace estimation.

\begin{figure}
\renewcommand{\arraystretch}{1.3}
\centering
\begin{tabular}{cccc}
         \multicolumn{2}{c}{$\bftheta_{\rm true}$} & \multicolumn{2}{c}{$\bftheta_0$} \\ 
     \includegraphics[width=.16\textwidth]{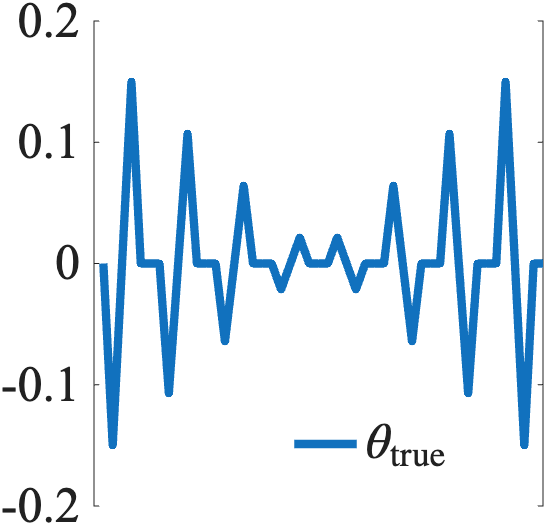}
     &
     \includegraphics[width=.195\textwidth]{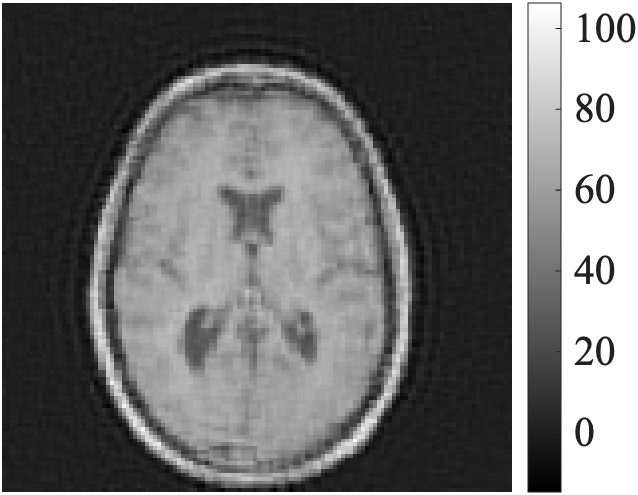}
     & 
     \includegraphics[width=.16\textwidth]{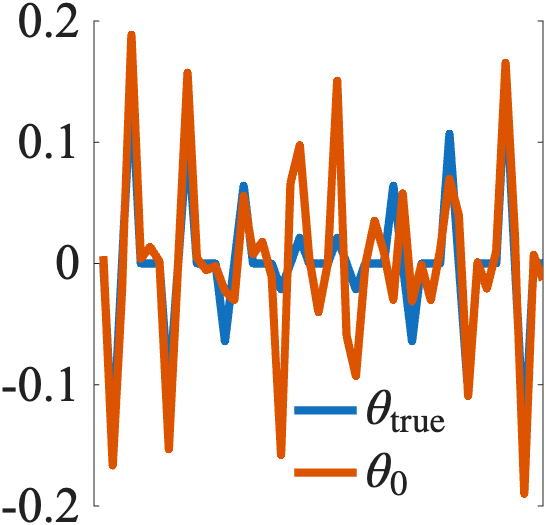}
     & 
     \includegraphics[width=.195\textwidth]{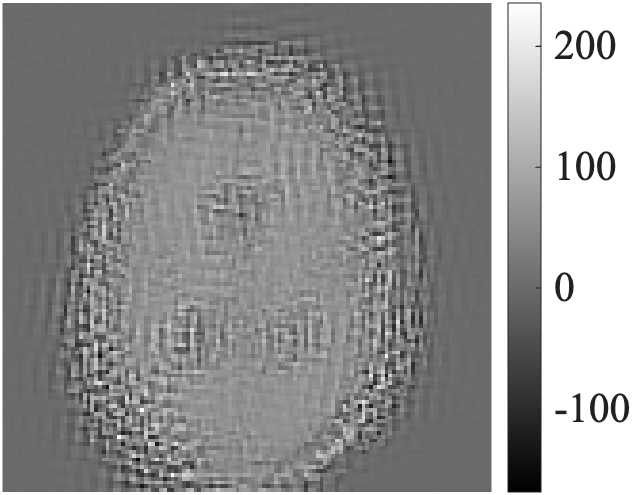}
    \\
    \multicolumn{2}{c}{\textbf{SAA}} & \multicolumn{2}{c}{\textbf{\mmmc}}
    \\ 
    \includegraphics[width=.16\textwidth]{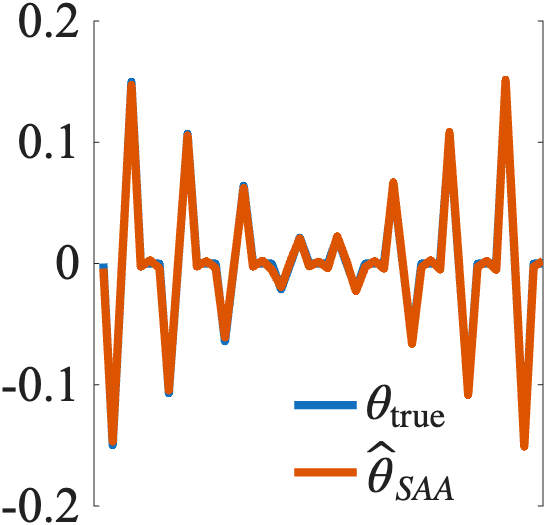} 
     &
    \includegraphics[width=.195\textwidth]{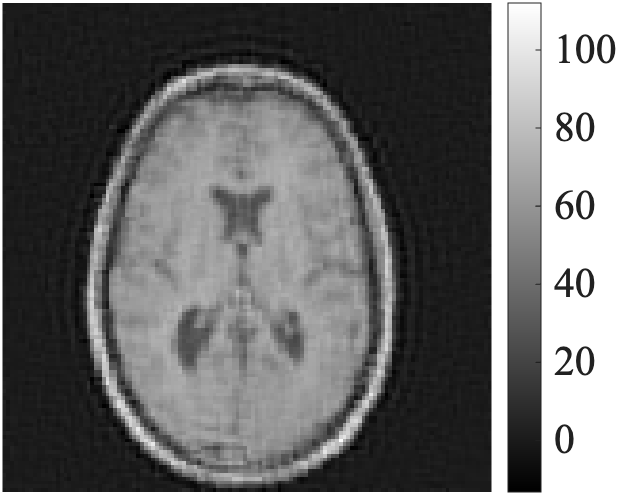}
    & 
     \includegraphics[width=.16\textwidth]{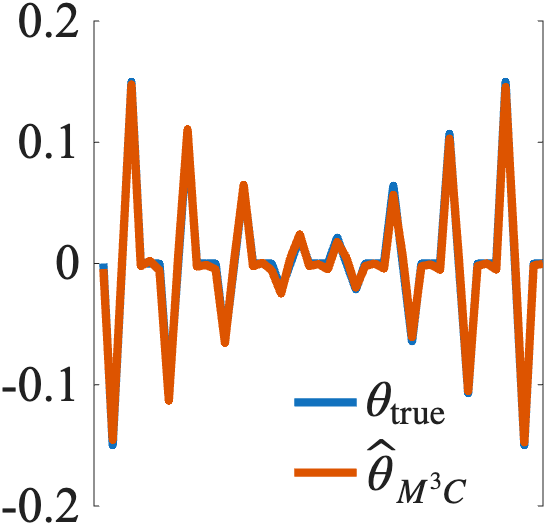}
     &  \includegraphics[width=.195\textwidth]{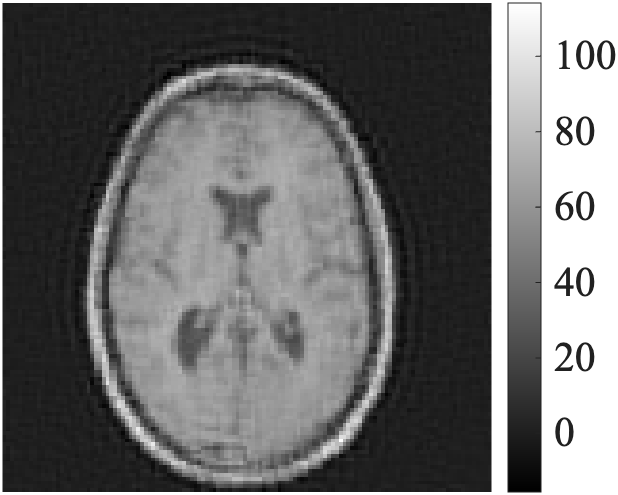}
     \\
\end{tabular}
\caption{Super-resolution example. Top row: The true and initial parameters, $\bftheta_{\rm true}$ and $\bftheta_0$, along with the resulting image reconstructions. Bottom row: Parameter estimations, $ \widehat{\bftheta}_{\rm SAA}$ and $\widehat{\bftheta}_{\rm M^3C}$, computed using Option (a), along with the resulting image reconstructions. The relative errors of the estimated parameters, computed as $\|\widehat{\bftheta}-\bftheta_{\rm true}\|/\|\bftheta_{\rm true}\|$, are $0.0473$ for $ \widehat{\bftheta}_{\rm SAA}$ and $0.0656$ for $\widehat{\bftheta}_{\rm M^3C}$. Additional quantitative comparisons are given in Table~\ref{tab:SR_approx}.}
\label{fig:SR_results}
\end{figure}

\begin{table}[ht!]
    \centering
    \renewcommand{\arraystretch}{1.25}
    \begin{tabular}{|c|c|c|c|c|c|c|c|}
    \hline
     & & total & total & time & \multicolumn{2}{c|}{matvecs}  & rel
    \\
    \cline{6-7}
     & & iter & fn evals  & (s) & $\bfA$ \& $\bfA^\top$ & $\bfQ$ & error 
    \\ \hline
    \multirow{ 2}{*}{(a)}& \textbf{SAA} & 164 & 703 & $5.73\times 10^3$  & 28,582,364 & 14,189,247 & 0.1866
    \\ 
    \cline{2-8}
    & \textbf{\mmmc} & 172 & 791 & $5.31\times 10^3$ & 15,678,192 & 7,724,401  & 0.1876
    \\ \hline \hline
    \multirow{ 2}{*}{(b)}& \textbf{SAA} & 173 & 9,047  & $9.32\times 10^3$   & 53,459,075  & 26,725,014 & 0.1865
    \\ 
    \cline{2-8}
    &  \textbf{\mmmc} & 220 & 11,616  & $561$ & 3,751,594 & 1,869,989 & 0.1868
    \\ \hline
    \end{tabular}
    
    \caption{Super-resolution example. A comparison of total number of iterations (total number of inner iterations for \mmmc), total function evaluations, runtime, total number of matvecs, and relative reconstruction errors for SAA and \mmmc.
    Results in Option (a) use approximations of the analytic gradients, and results in Option (b) use finite difference approximations.
    }
    \label{tab:SR_approx}
\end{table}

We compare the two methods using approximate gradients in Option (a) and Option (b) for SAA and \mmmc. The results provided in \Cref{tab:SR_approx}(a) show that the methods are comparable in terms of runtime and relative reconstruction error along with the parameter estimations and reconstructions provided in \Cref{fig:SR_results}. However, the \mmmc method requires fewer matvecs despite using more function evaluations, which implies that \mmmc has a lower computational cost than SAA. 
From the results provided in \Cref{tab:SR_approx}(b), we observe that the \mmmc method results in a 16 times speedup compared to the SAA method along with a significant decrease in the number of matvec operations while achieving a similar relative reconstruction error. This indicates that evaluating $\widehat{\mc{G}}_{N_t}(\bftheta\mid\bftheta_t)$ is computationally cheaper than evaluating $\widehat{\mc{F}}_{\rm SL}(\bftheta)$. Comparing the results using different gradient approximations, we conclude that the main computational cost of the \mmmc method with Option (a) is constructing a Monte Carlo approximation of the gradient, as the finite difference approach (Option (b)) uses $\approx4.3$ times fewer matvecs but $\approx 1.4$ times more function evaluations.

\begin{figure}[t]
    \centering
    \small 
    
    \begin{subfigure}[t]{0.24\textwidth}
        \centering
        \includegraphics[width=\linewidth]{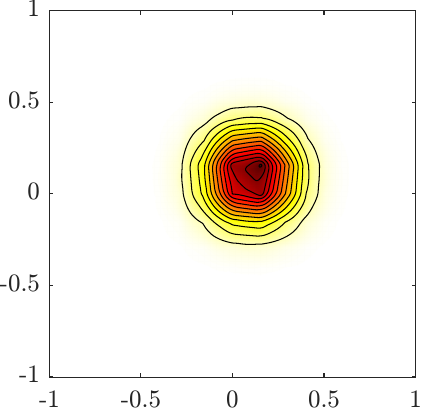}
        \caption{Prior mean $\bfmu_\bfx$}
    \end{subfigure}
    \hfill
    \begin{subfigure}[t]{0.24\textwidth}
        \centering
        \includegraphics[width=\linewidth]{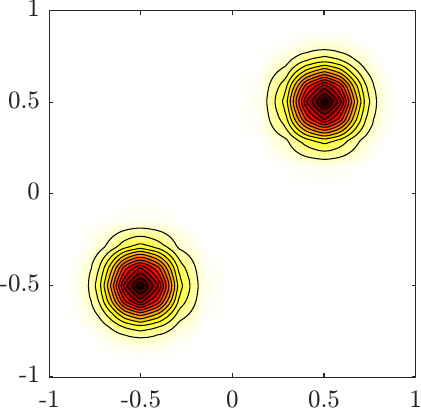}
        \caption{True $u_0(\bfr;\bftheta_{\mathrm{true}})$}
    \end{subfigure}
    \hfill
    \begin{subfigure}[t]{0.24\textwidth}
        \centering
        \includegraphics[width=\linewidth]{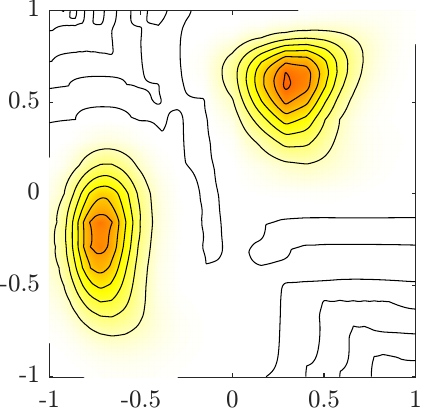} 
        \caption{True $u(\bfr, T;\bftheta_{\mathrm{true}})$}
    \end{subfigure}
    \hfill
    \begin{minipage}[b]{0.24\textwidth}
        \centering
        \vspace{-3cm}  
    \end{minipage}

    \vspace{-0.15cm}  

    \begin{subfigure}[t]{0.24\textwidth}
        \centering
        \includegraphics[width=\linewidth]{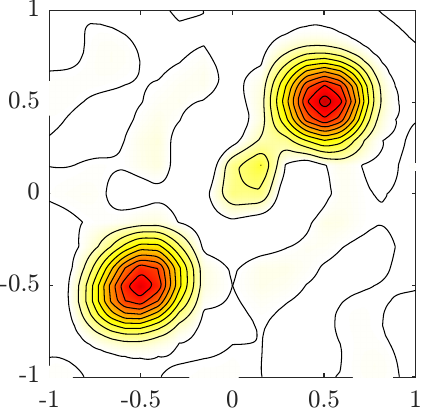}
        \caption{$u_0(\bfr;\widehat{\bftheta}_{\mathrm{M^3C}})$}
    \end{subfigure}
    \hfill
    \begin{subfigure}[t]{0.24\textwidth}
        \centering
        \includegraphics[width=\linewidth]{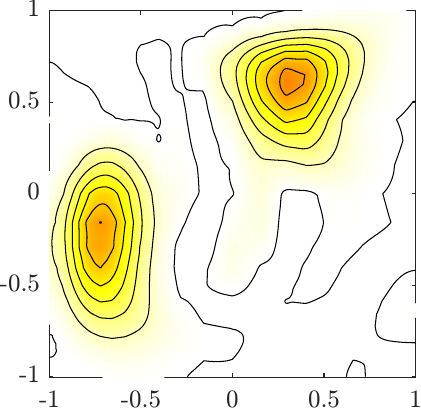}
        \caption{$u(\bfr,T;\widehat{\bftheta}_{\mathrm{M^3C}})$}
    \end{subfigure}
    \hfill
    \begin{subfigure}[t]{0.24\textwidth}
        \centering
        \includegraphics[width=\linewidth]{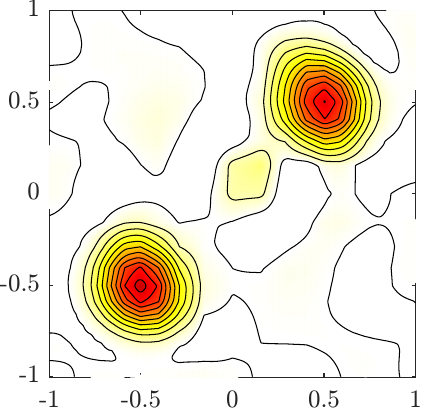}
        \caption{$u_0(\bfr;\bftheta_{\mathrm{pr}})$}
    \end{subfigure}
    \hfill
    \begin{subfigure}[t]{0.24\textwidth}
        \centering
        \includegraphics[width=\linewidth]{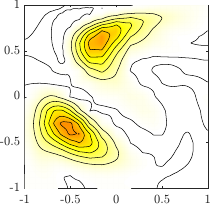}
        \caption{$u(\bfr,T;\bftheta_{\mathrm{pr}})$}
    \end{subfigure}
\vspace{-0.15cm}  
 \caption{Advection--diffusion example. The top row provides the prior mean $\bfmu_\bfx$, along with the initial and final concentration fields corresponding to ground truth parameters $\bftheta_{\rm true}$. 
 In the bottom row, we provide the reconstructed initial concentrations and the corresponding predicted final states for the \mmmc estimate $\widehat{\bftheta}_{\rm M^3C}$ and the prior $\bftheta_{\rm pr}$.}
    \label{fig:inverse_results_grid}
\end{figure}

\subsection{Advection--Diffusion with an Uncertain Navier--Stokes Velocity}
\label{sec:ns}
We consider a model problem from contaminant source identification, which involves the transport of a passive scalar concentration \(u = u(\bfr,\tau)\) over a spatial domain
\(\Omega \subset \mathbb{R}^2\) and a time interval \((0,T]\).
The evolution of \(u\) is governed by an advection--diffusion equation driven by a velocity field
\(\bm{v}_\theta\), which is obtained as the solution of a Navier--Stokes model with an uncertain forcing term
\(\bm{f}_\theta\). 
For our numerical experiments, we utilize the  IFISS library \cite{IFISS}.

We consider the forward problem mapping the unknown initial concentration $u_0\in L^2(\Omega)$, which is the primary parameter of interest, to observations. For a constant diffusion $\kappa > 0$ and velocity $\bm{v}_\theta$, the state $u$ satisfies:
%
\begin{align}
\begin{aligned}
\frac{\partial u}{\partial \tau} - \nabla \cdot (\kappa \nabla u) + \bm{v}_\theta \cdot \nabla u &= 0 
&& \text{in } \Omega_T := \Omega \times (0,T), \\
u(\cdot,0) &= u_0 
&& \text{in } \Omega,
\end{aligned}\label{eq:advection_diffusion}
\end{align}
subject to standard Dirichlet and Neumann boundary conditions on $\partial\Omega \times (0,T)$.  

Observations are collected at \(m\) spatial sensor locations \(\{\bfr_1,\dots,\bfr_m\}\subset\Omega\) at the final time only ($\tau=T$). The inverse problem involves reconstructing the initial condition $u_0$, represented by the discrete vector $\bfx$, from the final time measurements $\{u(\bfr_j, T)\}_{j=1}^{m}$.  Denote by \(\mathcal{H}:\;L^2(\Omega)\to\mathbb{R}^{m}\) the pointwise sampling operator
\[ 
\bfA(\bftheta) :=\mathcal{H}(u) = \big(u(\bfr_1,T;\bftheta),\dots,u(\bfr_{m},T;\bftheta)\big)^\top,
\]
with~\eqref{eq:case2} as the data model with prior mean $\bfmu_\bfx$ (provided in Figure \ref{fig:inverse_results_grid} (a)) and Mat\'ern covariance matrix $\bfQ_\bfx$ with correlation length \(0.2\) and smoothness parameter \(0.5\). 
Synthetic observations are generated on a $15 \times 15$ uniform grid with $4\%$ relative additive noise. We set $\Omega = [-1, 1]^2$ and $T = 1.5$.
The resulting initial and final concentration fields, corresponding to the ground truth $\bftheta_{\rm true}$, are provided in Figures \ref{fig:inverse_results_grid} (b) and (c) respectively.




The velocity field \(\bm{v}_\theta\) is obtained from the steady incompressible
Navier--Stokes equations posed on \(\Omega \subset \mathbb{R}^2\):
%
\begin{equation}\label{eq:stokes-momentum}
\begin{aligned}
  -\nu\nabla^2 \bm{v}_\theta 
  + \bm{v}_\theta \cdot \nabla\bm{v}_\theta + \nabla p
  &= \bm{f}_\theta, \\
  \nabla \cdot \bm{v}_\theta &= 0,
\end{aligned}
\qquad \text{in } \Omega,
\end{equation}
with boundary condition \(\bm{v}_\theta = \mathbf{g}\) on \(\partial\Omega\) and
pressure normalization \(p(\mathbf{r}_0)=0\) for some \(\mathbf{r}_0 \in \partial\Omega\).
Here \(\bm{v}_\theta : \Omega \to \mathbb{R}^2\) denotes the velocity field,
\(p : \Omega \to \mathbb{R}\) the pressure, and $\nu>0$ is the kinematic viscosity. 
%
The forcing term \(\bm{f}_\theta \in [L^2(\Omega)]^2\) models spatially localized body forces with uncertain amplitudes:
\begin{align}
\bm{f}_\theta(\bm{r})
=
\biggl(
\sum_{j=1}^\ell \theta^{(j)} \exp(-\|\bm{r}-\bm{r}_{c_j}\|^2 / r^2),\, 0
\biggr).
\label{eq:forcing-term}
\end{align}
where \(\bm{r}_{c_j} \in \Omega\) are fixed and \(r>0\) is a prescribed length scale.
The parameter vector \(\bftheta = (\theta^{(1)},\dots,\theta^{(\ell)})^\top \in \mathbb{R}^\ell\) represents the unknown forcing magnitudes.
Homogeneous Dirichlet boundary conditions are imposed on all boundaries except the top lid, where a unit tangential velocity directed clockwise is prescribed. Due to  the strong nonlinearity of the Navier--Stokes equations, the induced parameter-to-observable map is highly nonlinear. In particular, both the magnitude and spatial placement of the forcing terms substantially influence the resulting flow field and downstream quantities of interest. Consequently, the inverse problem considered here  exhibits pronounced nonlinear coupling and parameter interaction effects.


\begin{figure}[t!]
\centering
\includegraphics[width=0.8\textwidth, height=4.5cm]{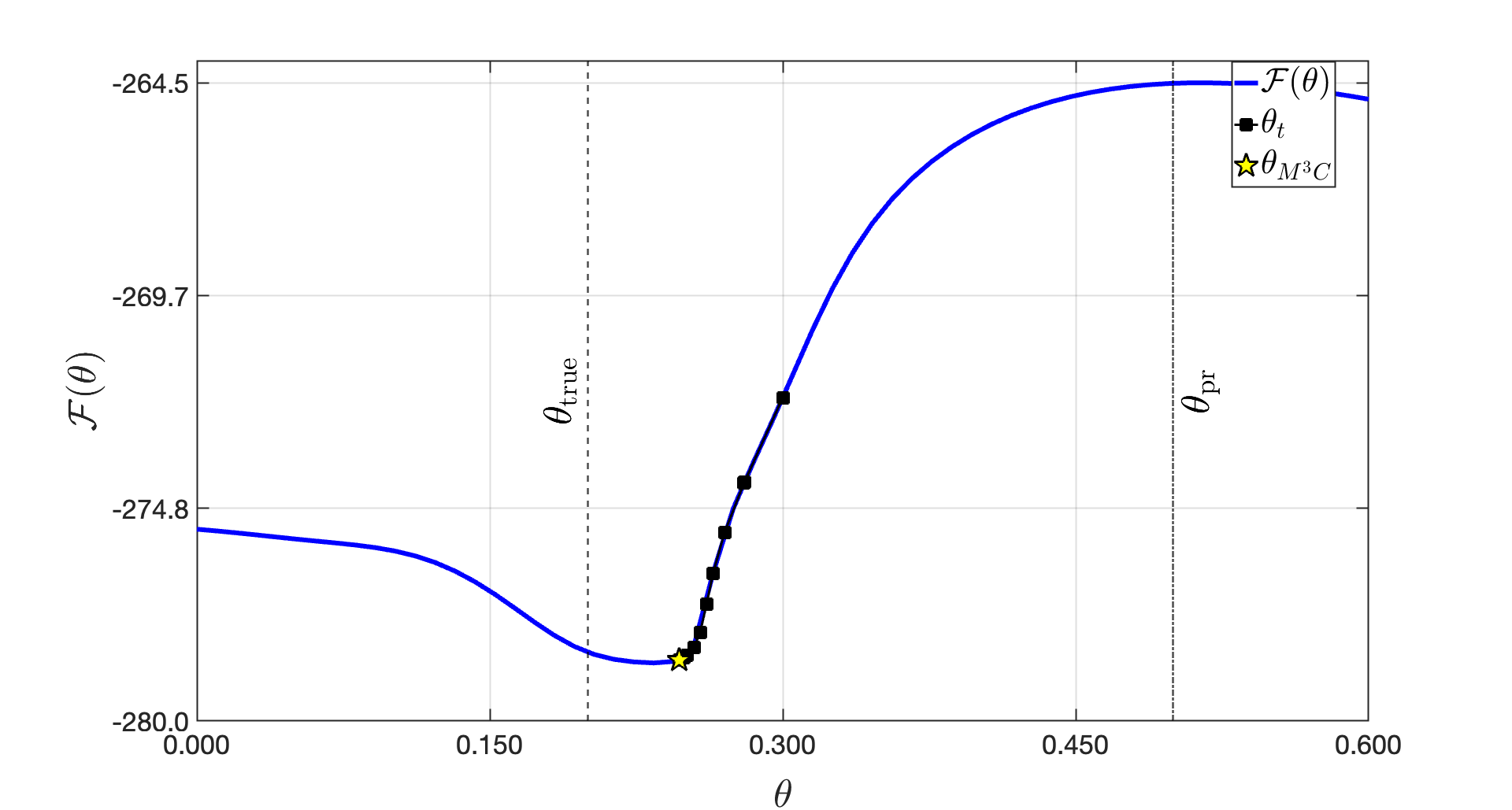}
\caption{Advection--diffusion example. In the case $\ell=1$, we provide objective function values $\calF(\theta)$ corresponding to the \mmmc iterates $\theta_t$ and the final \mmmc estimate $\theta_{M^3C}$.  The initial guess was $\theta_0 = 0.3$, and the vertical markers indicate the prior mean $\theta_{\mathrm{pr}}$ and the the true parameter $\theta_{\mathrm{true}}$.}
\label{fig:psi_k1}
\end{figure}

In our numerical experiments, we use {$\nu=1/200$} and forcing term \eqref{eq:forcing-term} with $\ell=1$, $\bfr_{c_1} = (-0.4, 0.3)^\top $, and $r=1$. We adopt a Gaussian prior for the source amplitudes $ \bftheta = \bmat{\theta}$, where
  $\theta \sim \mc{N}(\theta_{\rm pr}, 0.5)$ with $\theta_{\rm pr} = 0.5$. Figure~\ref{fig:psi_k1} illustrates the behavior of the function values $\calF(\theta)$ at the \mmmc iterates $\theta_t$, where for optimization, we use the initial guess \(\theta_0 = 0.3\). Function values at the prior mean $\calF(\theta_{\rm pr})$ and the true parameter $\calF(0.2)$ are provided for reference.
This example highlights the nonconvex structure induced by the nonlinear forward model and the discrepancy between prior information and data-driven  estimates. 
The \mmmc framework utilizes $10$ outer iterations, with the Monte Carlo approximation in \eqref{eqn:surrmc} computed using $N_t = 16$ samples and $n = 30$ probing vectors for the randomized preconditioner \cite{Frangella}
used in PCG. 
This configuration allows us to leverage parallel matvecs.
In this setting, the optimizer (inner iterations) required an average of $4.7$ iterations per outer step.
The initial and final concentration fields corresponding to the final \mmmc iterate $\widehat{\bftheta}_{\rm M^3C}$ and the prior  $\bftheta_{\rm pr}$
are provided in the bottom row of Figure \ref{fig:inverse_results_grid}. We do not include results from SAA, which had trouble converging for this problem. 

\section{Conclusions}
\label{sec:conclusions}
In this work, we develop the \mmmc method, which constructs a sequence of MM majorants that, with a Monte Carlo estimator, results in an efficient computational method for hyperparameter estimation.  Specifically, we adopt an MM approach where a sequence of majorants is defined for the log-determinant term, resulting in an inner-outer optimization scheme.  To handle expensive trace terms in the majorant, Monte Carlo estimators are used.  A probabilistic analysis of MM approaches with inexactness shows that \mmmc iterates converge with high probability to a critical point of the original cost functional.  Also, we derive bounds on the minimal number of samples for the trace estimator and bounds on the number of Monte Carlo samples and degree of the Lanczos polynomial to guarantee a small backward error.
We show how these approximations can be used for hyperparameter estimation in hierarchical Bayesian inverse problems, and we extend these methods to separable nonlinear forward models by treating the unknown model parameters as hyperparameters.


\appendix
\bibliographystyle{abbrv}
\bibliography{references}
\end{document}


\nolinenumbers
\maketitle

This report provides supplementary material for the paper “A Majorization-Minimization with Monte Carlo Approach for Hyperparameter Estimation”. In \Cref{sec:SAA} we provide additional details regarding gradient computations and computational costs for the SAA approach.  Then, in \Cref{sec:proofs}, we provide proofs of results used in \Cref{main-sec:analysis}.  Finally, in \Cref{sec:majorant}, we provide a visualization of the majorant for a small seismic example problem.


\section{Computational details for SAA}
\label{sec:SAA}
\paragraph{Gradient computations}
Consider the original objective function $\mc{F}(\bftheta)$ given in \eqref{main-eq:F}, where analytical expressions for the gradients are given in \eqref{main-eq:fullGradpsi} corresponding to Case I and \eqref{main-eq:fullGrady} corresponding to Case II.  For Case I, Monte Carlo approximations of the gradients of $\mc{F}$ were developed and used for SAA in \cite{chung2024efficient}.  These gradient approximations were used for \mmmc, see \Cref{main-sec:NumEx}. 

Here, we describe similar Monte Carlo approximations of the gradient evaluations for Case II. The derivatives are provided in \eqref{main-eq:fullGrady} and are copied here for completeness,
\begin{align}
        & \frac{\partial{\mc{F}}}{\partial y_j} =  -\frac{1}{\pi_{\bfy}(\bfy)} \frac{\partial\pi_{\bfy}(\bfy)}{\partial y_j} + \frac{1}{2}\trace\left(\bfPsi(\bftheta)^{-1}\frac{\partial \bfPsi(\bftheta)}{\partial y_j}\right) \label{eq:fullGrady}   \\
         &- \frac{1}{2}\Big[ \bfPsi(\bftheta)^{-1}\left(\bfA(\bfy)\bfmu_\bfx-\bfb\right)\Big]\t\left[\frac{\partial \bfPsi(\bftheta)}{\partial y_j}\bfPsi(\bftheta)^{-1}(\bfA(\bfy)\bfmu_\bfx-\bfb)-2\frac{\partial\bfA(\bfy)}{\partial y_j}\bfmu_x  \right].  \notag
\end{align}



First, notice that the last term in \Cref{eq:fullGrady} can be evaluated by reusing computations made during the objective function evaluation. Recall that we have already computed the solution to $\bfPsi(\bftheta) \bfr = \bfA(\bfy)\bfmu_\bfx-\bfb$.  Thus, we can evaluate the expression efficiently (for $1\leq j \leq \ell$), 
\begin{align*}
    \Big[ &\bfPsi(\bftheta)^{-1}\left(\bfA(\bfy)\bfmu_\bfx-\bfb\right)\Big]\t\left[\frac{\partial \bfPsi(\bftheta)}{\partial y_j}\bfPsi(\bftheta)^{-1}(\bfA(\bfy)\bfmu_\bfx-\bfb)-2\frac{\partial\bfA(\bfy)}{\partial y_j}\bfmu_x  \right]\\
    & = \bfr\t \left( \frac{\partial \bfPsi(\bftheta)}{\partial y_j} \bfr -2\frac{\partial\bfA(\bfy)}{\partial y_j}\bfmu_x\right), 
\end{align*}
where
\begin{equation*}
    \frac{\partial\bfPsi(\bftheta)}{\partial y_j}\bfr = \bfA(\bfy)\bfQ_\bfx\frac{\partial \bfA(\bfy)\t}{\partial y_j} \bfr + \frac{\partial\bfA(\bfy)}{\partial y_j}\bfQ_\bfx\bfA(\bfy)\t\bfr.
\end{equation*}

Next, we compute an approximation of the trace term, using a Monte Carlo approximation, as done in \Cref{main-e:hutch}. Given $N$ independent Rademacher vectors $\bfw_i \in \{\pm1\}^m$, the Hutchinson estimator for the trace is given by
\begin{equation*}
\trace\left( \bfPsi(\bftheta)^{-1}\frac{\partial \bfPsi(\bftheta)}{\partial y_j} \right) \approx \frac{1}{N_t} \sum_{i=1}^{N_t} \bfw_i^\top \bfPsi(\bftheta)^{-1}\frac{\partial \bfPsi(\bftheta)}{\partial y_j}\, \bfw_i .
\end{equation*}   
We can also symmetrize the estimator using the cyclic property of the trace,
\begin{equation*}
\trace\left( \bfPsi(\bftheta)^{-1/2}\frac{\partial \bfPsi(\bftheta)}{\partial y_j} \bfPsi(\bftheta)^{-1/2}\right) \approx \frac{1}{N_t} \sum_{i=1}^{N_t} \bfw_i^\top \bfPsi(\bftheta)^{-1/2}\frac{\partial \bfPsi(\bftheta)}{\partial y_j}\bfPsi(\bftheta)^{-1/2}\, \bfw_i.
\end{equation*}
Additionally, trace estimation can be efficiently computed using a preconditioner. Let $\bfG$ be a preconditioner that is easy to invert and satisfies $\bfG\t\bfG \approx \bfPsi(\bftheta)^{-1}$. 
Then, consider the factorization of $\bfPsi(\bftheta)^{-1}$,
\begin{equation*}
    \bfPsi(\bftheta)^{-1} = \bfG^{-\top}\left( \bfG\bfPsi(\bftheta)\bfG\t \right)^{-1/2}\left( \bfG\bfPsi(\bftheta)\bfG\t \right)^{-1/2}\bfG^{-1}.
\end{equation*}
Using the cyclic property of the trace leads to the Hutchinson trace estimate,
\begin{equation*}
\trace\left(\left( \bfG\bfPsi(\bftheta)\bfG\t \right)^{-1/2}\bfG^{-1}\frac{\partial \bfPsi(\bftheta)}{\partial y_j}\bfG^{-\top}\left( \bfG\bfPsi(\bftheta)\bfG\t \right)^{-1/2}\right) \approx \frac{1}{N_t}\sum_{i=1}^{N_t}\bfzeta_i\t\frac{\partial \bfPsi(\bftheta)}{\partial y_j}\bfzeta_i
\end{equation*}
for $1\leq j \leq \ell$, where $\bfzeta_i \equiv \bfG^{-\top}\left( \bfG\bfPsi(\bftheta)\bfG\t \right)^{-1/2}\bfw_i$ for $1\leq i \leq N_t$. Matvec computations with the inverse square root matrices can be done through a Lanczos method. Moreover, we can reuse the basis vectors from the Lanczos method used to approximate the log-determinant to efficiently compute $\bfzeta_i$ \cite{chung2024efficient}.
Thus, we get an approximation of $\nabla_\bfy\mc{F}$ with elements
\begin{equation}
    \frac{1}{\pi_{\bftheta}(\bftheta)}\frac{\partial\log\pi_{\bftheta}(\bftheta)}{\partial y_j} + \frac{1}{2N_t}\sum_{i=1}^{N_t}\bfzeta_i\t\frac{\partial \bfPsi(\bftheta)}{\partial y_j}\bfzeta_i - \frac{1}{2} \bfr\t\left( \frac{\partial \bfPsi(\bftheta)}{\partial y_j} \bfr -2\frac{\partial\bfA(\bfy)}{\partial y_j}\bfmu_x\right). \notag
\end{equation}

Moreover, we can use a finite difference approximation to estimate the partials $\frac{\partial \bfPsi(\bftheta)}{\partial y_j}\bfzeta_i$, $\frac{\partial \bfPsi(\bftheta)}{\partial y_j}\bfr$ and 
$\frac{\partial\bfA(\bfy)}{\partial y_j}\bfmu_x$ and a Preconditioned Conjugate Gradient (PCG) method is used for matvec operations with $\bfPsi(\bftheta)^{-1}$. It should be noted that we are not computing the exact derivatives of $\widehat{\mc{F}}_{\rm SL}$ or $\mathcal{F}$. 

 

\paragraph{Computational costs for SAA} 
Following the notation in Section \ref{main-ssec:compconsider}, here we provide a discussion of the computational costs for SAA:

\begin{enumerate}
    \item \textbf{Precomputation}:
With these assumptions, the cost of the precomputation involved in constructing the preconditioner is $\mc{O}(mr^2) + T_Ar$ flops.
\item \textbf{Objective function}: We can compute the preconditioner $\bfG$ in $\mc{O}(mr^2 + r^3)$ flops. The basis $\bfV_k$ can be computed in $kT_\Psi + \mc{O}(mk^2)$ flops, when Lanczos is used with reorthogonalization. In total, the log-determinant can be computed in $  N_t( kT_\Psi   +  \mc{O}(mk^2+k^3))$ flops. There is an additional cost of computing $\bfz$ and $\bfz\t\bfr$, which is $kT_\Psi + \mc{O}(mk)$ flops.

\item \textbf{Gradient computation}: Here, we consider taking the gradient with respect to $y_j$ for $1\leq j\leq \ell$ where computations with $\frac{\partial \bfPsi(\bftheta)}{\partial y_j}$, $\frac{\partial \bfPsi(\bftheta)}{\partial y_j}$, and $\frac{\partial \bfA(\bfy)}{\partial y_j}$ are approximated using finite difference. Using the precomputed $\bfzeta_t$, we can approximate $\sum_{i=1}^{N_t}\bfzeta_i\t\frac{\partial \bfPsi(\bftheta)}{\partial y_j}\bfzeta_i$ in $\ell N_t(2T_\Psi + \mc{O}(m))$ flops. Reusing $\bfr$ from above, $\bfr\t\frac{\partial \bfPsi(\bftheta)}{\partial y_j}\bfr$ can be approximated in $\ell(2T_\Psi + \mc{O}(m))$ and $\bfr\t\frac{\partial \bfA(\bfy)}{\partial y_j}\bfmu_\bfx$ can be approximated in $\ell(2T_A + \mc{O}(m))$.
\end{enumerate}

\section{Proofs} \label{sec:proofs}

In this section, we aim to give a proof of Lemma~\ref{main-cor:LsmoothnessG}. We will need the following lemma.
\begin{lemma}[Lipschitz continuity of the log-determinant MM surrogate]
\label{lem:lipschitz_Q}

For a fixed $\bftheta_t \in \Theta$, the function $\mc{Q}(\cdot \mid \bftheta_t)$  defined in~\eqref{main-eq:logdetmajorization} is Lipschitz continuous on $\Theta$. More precisely, for any $\bftheta, \bftheta' \in \Theta$:
\[
|\mc{Q}(\bftheta \mid \bftheta_t) - \mc{Q}(\bftheta' \mid \bftheta_t)|
\le
L \|\bftheta - \bftheta'\|,
\]
where the Lipschitz constant is given by $\displaystyle L = \frac{{m}}{\alpha} L_\Psi$.
\end{lemma}

\begin{proof}
Let $\bfW_t := \bfPsi^{-1}(\bftheta_t)$. Since the terms
$\log\det \bfPsi(\bftheta_t)$ is independent of $\bftheta$, it suffices to bound
\[
\overline{\mc{G}}_t(\bftheta)
:=
\trace(\bfW_t\bfPsi(\bftheta)).
\]

For any $\bftheta,\bftheta' \in \Theta$,
\[
|\overline{\mc{G}}_t(\bftheta \mid \bftheta_t) - \overline{\mc{G}}_t(\bftheta' \mid \bftheta_t)|
=
\left|
\trace
\!\left(
\bfW_t(\bfPsi(\bftheta)-\bfPsi(\bftheta'))
\right)
\right|.
\]

Using the inequality
$
|\trace(\bfA \bfB)|
\le
{m}\,\|\bfA\|_2 \|\bfB\|_2$,
we obtain
\[
|\overline{\mc{G}}_t(\bftheta \mid \bftheta_t) - \overline{\mc{G}}_t(\bftheta' \mid \bftheta_t)|
\le
 {m}\,\|\bfW_t\|_2
\|\bfPsi(\bftheta)-\bfPsi(\bftheta')\|_2.
\]

From the spectral bound, by \Cref{main-ass:psi}, $
\|\bfW_t\|_2
=
\|\bfPsi^{-1}(\bftheta_t)\|_2
\le
\alpha^{-1},$
and by the Lipschitz condition 
\[
\|\bfPsi(\bftheta)-\bfPsi(\bftheta')\|_2
\le
L_\Psi \|\bftheta-\bftheta'\|.
\]
Combining the bounds yields the result.
\end{proof}
Finally, we are ready to prove Lemma~\ref{main-cor:LsmoothnessG}. 
\begin{proof}[Proof of Lemma~\ref{main-cor:LsmoothnessG}]
By definition, we can decompose the surrogate function as:
\begin{align}
    \mc{G}(\bftheta \mid \bftheta_t)= \mc{Q}(\bftheta \mid \bftheta_t) -\log\pi_{\bftheta}(\bftheta)+ \frac{1}{2} \phi(\bftheta),
\end{align}
where $\phi(\bftheta) :=
\bfc^\top(\bftheta)\bfPsi^{-1}(\bftheta)\bfc(\bftheta),$ and $\bfc(\bftheta):= \bfA(\bftheta)\bfmu_\bfx-\bfb.$
To show that $\nabla_1\mc{G}$ is Lipschitz, it is enough to show that the gradients of these three components are Lipschitz continuous.
%
The prior term follows directly  from the assumptions on $\log \pi_{\bftheta}(\bftheta)$.

\medskip
The gradient of $\mc{Q}$ with respect to $\bftheta$ depends linearly on the Jacobian of $\bfPsi(\bftheta)$. Namely, 
the difference between the $i$-th gradient component at two points, $\bftheta_1$ and $\bftheta_2$:
%
\begin{align*}
\left| [\nabla_1 \mc{Q}(\bftheta_1 \mid \bftheta_t)]_j - [\nabla_1 \mc{Q}(\bftheta_2 \mid \bftheta_t)]_j \right| =& \left| \trace\!\left[ \bfPsi_t^{-1} \left( \frac{\partial \bfPsi(\bftheta_1)}{\partial \theta_j} - \frac{\partial \bfPsi(\bftheta_2)}{\partial \theta_j} \right) \right] \right|\\
\le & {m} \|\bfPsi_t^{-1}\|_2 \left\| \frac{\partial \bfPsi(\bftheta_1)}{\partial \theta_j} - \frac{\partial \bfPsi(\bftheta_2)}{\partial \theta_j} \right\|_2.
\end{align*}
%
Because $\nabla \bfPsi(\bftheta)$ is Lipschitz continuous by assumption, and $\|\bfPsi_t^{-1}\|_2\le \alpha^{-1}$. Therefore,  $\nabla_1 \mc{Q}(\bftheta \mid \bftheta_t)$ is Lipschitz continuous.

\medskip
Finally, for the quadratic Term $\phi(\bftheta),$ 
taking the gradient of $\phi(\bftheta)$ using the product rule yields terms involving products of $\bfc(\bftheta)$, its Jacobian $\nabla \bfc(\bftheta)$, the inverse covariance $\bfPsi^{-1}(\bftheta)$, and the derivative of the inverse covariance $\nabla(\bfPsi^{-1}(\bftheta))$. Because $\Theta$ is compact and the components $\bfc(\bftheta)$ and $\bfPsi(\bftheta)$ have Lipschitz continuous gradients, they are continuously differentiable and therefore bounded on any compact set. Thus, there exist universal constants such that $\|\bfc(\bftheta)\|$, $\|\nabla \bfc(\bftheta)\|$, $\|\bfPsi^{-1}(\bftheta)\|_2$, and $\|\nabla \bfPsi(\bftheta)\|_2$ are all strictly bounded for all $\bftheta \in \Theta$.
\end{proof}

\subsection{Sample Complexity for the Stochastic Surrogate}
\label{sec:StochasticSurrogate}
The following theorem establishes the uniform concentration of the stochastic surrogate, providing the basis for the sample complexity $N_t$ required in \Cref{main-thm:coverMMSAA}.

\begin{theorem}[Uniform Concentration {\cite[Theorem 4.3]{saibaba2025stochastic}}]\label{thm:uniform_concentration}
Let \Cref{main-ass:theta,main-ass:psi,main-assump:regularity} hold. At iteration $t$, let 
\[
\bfK(\bftheta) = \frac{1}{2}(\bfPsi^{-1}(\widehat{\bftheta}_t)\bfPsi(\bftheta) + \bfPsi(\bftheta)\bfPsi^{-1}(\widehat{\bftheta}_t))
\]
be the kernel of the trace component of the surrogate $\mc{G}(\cdot \mid \widehat{\bftheta}_t)$. For any $\varepsilon_t > 0$ and $\delta_t \in (0,1)$, let $\gamma_t = S(\frac{\varepsilon_t \alpha}{4 m L_\Psi})$ denote the covering number of $\Theta$. If the number of Rademacher samples $N_t$ satisfies
\begin{equation*}
    N_t \geq \frac{16(2\varsigma_F^2 + \varepsilon_t \varsigma_2)}{\varepsilon_t^2} \ln \left( \frac{2 \gamma_t}{\delta_t} \right),
\end{equation*}
where $\varsigma_F$ and $\varsigma_2$ are defined as in \Cref{main-thm:coverMMSAA}, then the stochastic surrogate satisfies the uniform approximation property
\begin{equation}
    \mathbb{P} \left( \max_{\bftheta \in \Theta} | \mc{G}(\bftheta \mid \widehat{\bftheta}_t) - \widehat{\mc{G}}_t(\bftheta \mid \widehat{\bftheta}_t) | \leq \frac{\varepsilon_t}{2} \right) \geq 1 - \delta_t.
\end{equation}
\end{theorem}

\begin{proof}
To establish the uniform approximation property, we verify the conditions required for the concentration result  \cite[Theorem 4.3]{saibaba2025stochastic}. It is clear that:
%
\begin{equation*}
|\mc{G}(\bftheta \mid \widehat{\bftheta}_t) - \widehat{\mc{G}}_t(\bftheta \mid \widehat{\bftheta}_t)| = \biggl| \trace(\bfK(\bftheta)) - \frac{1}{N_t} \sum_{i=1}^{N_t} \boldsymbol{\omega}_i^\top \bfPsi^{-1}(\widehat{\bftheta}_t) \bfPsi(\bftheta)\boldsymbol{\omega}_i \biggr|,
\end{equation*}
Under \Cref{main-ass:psi}, the matrix mapping $\bftheta \mapsto \bfPsi(\bftheta)$ is Lipschitz continuous and uniformly positive definite with $\bfPsi(\bftheta) \succeq \alpha \bfI$. It follows from the submultiplicativity of the spectral and Frobenius norms that
\begin{align*}
\|\bfK(\bftheta)\|_\xi \le \|\bfPsi^{-1}(\widehat{\bftheta}_t)\|_2 \|\bfPsi(\bftheta)\|_\xi \le \frac{1}{\alpha} \|\bfPsi(\bftheta)\|_\xi, \quad \xi \in \{2, F\}.
\end{align*}
The analysis in  Theorem 4.3 in \cite{saibaba2025stochastic} also requires a bound on the off-diagonal part,
$\overline{\bfK}(\bftheta) = \bfK(\bftheta) - \diag(\bfK(\bftheta))$. By~\cite{bhatia1989comparing}, 
\begin{align*}
\|\overline{\bfK}(\bftheta)\|_\xi \le   \|\bfK(\bftheta)\|_\xi \le \frac{1}{\alpha} \|\bfPsi(\bftheta)\|_\xi,  \quad \xi \in \{2, F\},
\end{align*}
which yields the constants $\varsigma_F$ and $\varsigma_2$ used in the sample complexity bound.
\end{proof}

\section{Visualization of the majorant}
\label{sec:majorant}
For the seismic inversion example described in \Cref{main-sub:seismic}, we show the majorization of $\mathcal{F}(\bftheta)$.  We consider a small version of the seismic inverse problem where $\bfx$ is a $32\times32$ image so that the full objective function can be computed exactly. We form $\mc{G}(\bftheta \mid\bftheta_t)$ around the point $\bftheta_t=(1.0762\times10^{-4},0.07,0.2)$ using \eqref{main-eq:Gmm} where $\theta_1^2=1.0762\times10^{-4}$ is the true value. The majorant and the full objective function are plotted in \Cref{fig:SImaj} as a function of $\theta_3$ and $\theta_2$ (left), and then as a slice in the $\theta_2$ (middle) and $\theta_3$ (right) directions. 

\begin{figure}
    \centering
    \begin{tabular}{c c c}
        \includegraphics[width=0.3\linewidth]{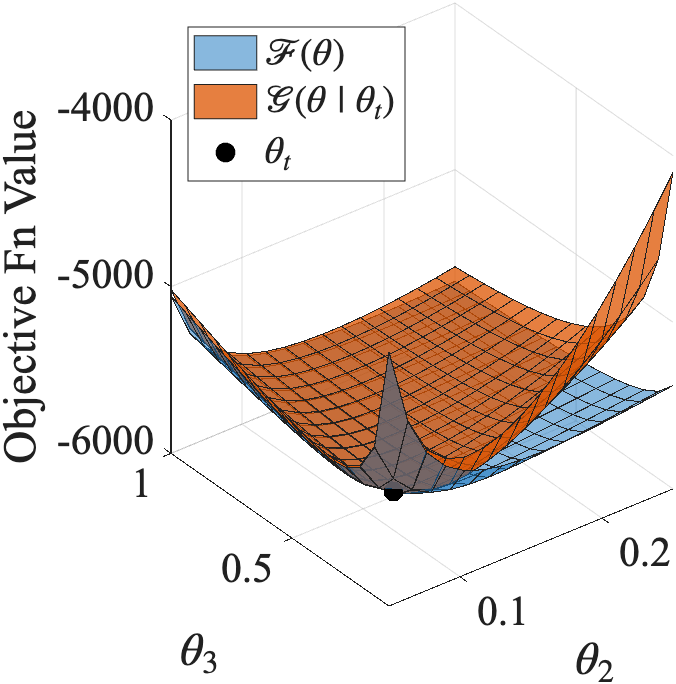} & 
        \includegraphics[width=0.3\linewidth]{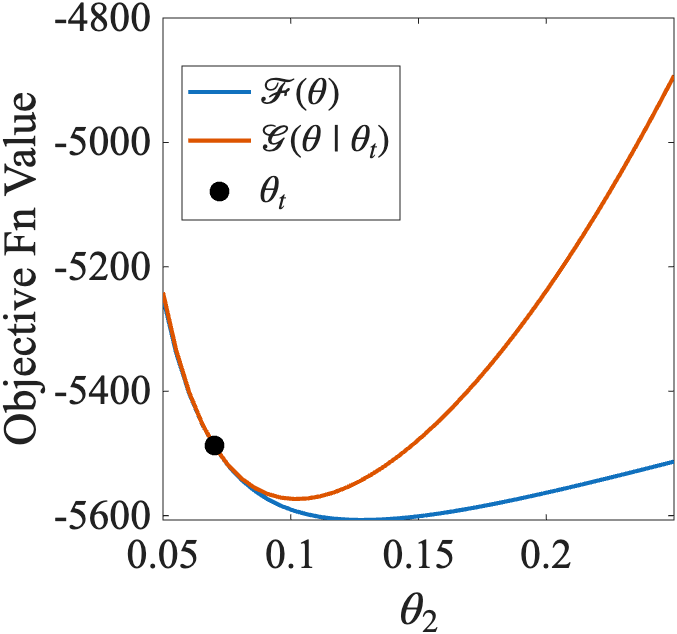} &
        \includegraphics[width=0.3\linewidth]{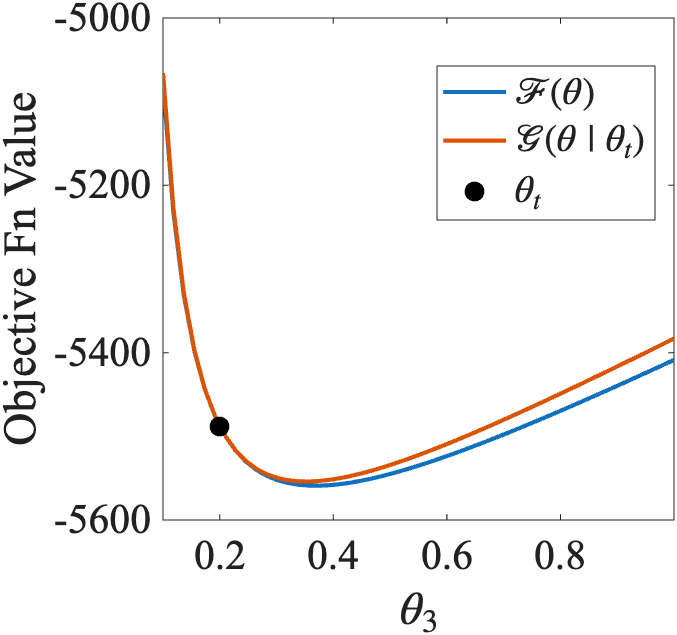} 
    \end{tabular}
    
    \caption{The majorization, $\mc{G}(\bftheta \mid \bftheta_t)$, of $\mc{F}(\bftheta)$ around the point $\bftheta_t=(1.0762\times10^{-4},0.07,0.2)$ for a small seismic inversion problem. }
    \label{fig:SImaj}
\end{figure}

\bibliographystyle{siamplain}
\bibliography{references}